\DeclarePairedDelimiter\floor{\lfloor}{\rfloor}
\newcommand\CC{\mathbb{C}}
\newcommand\FF{\mathbb{F}}
\newcommand\PP{\mathbb{P}}
\newcommand\QQ{\mathbb{Q}}
\newcommand\RR{\mathbb{R}}
\newcommand\ZZ{\mathbb{Z}}
\newcommand\cB{\mathcal{B}}
\newcommand\cF{\mathcal{F}}
\newcommand\cHH{\mathcal{H}}
\newcommand\cI{\mathcal{I}}
\newcommand\cK{\mathcal{K}}
\newcommand\cLL{\mathcal{L}}
\newcommand\cM{\mathcal{M}}
\newcommand\cQ{\mathcal{Q}}
\newcommand\cT{\mathcal{T}}
\newcommand\Sp{\text{Sp}}
\numberwithin{equation}{section}
\theoremstyle{plain}
\newtheorem{theorem}{Theorem}[section]
\newtheorem{proposition}[theorem]{Proposition}
\newtheorem{lemma}[theorem]{Lemma}
\newtheorem{corollary}[theorem]{Corollary}
\newtheorem{conjecture}[theorem]{Conjecture}
\newtheorem{question}{Question}
\theoremstyle{definition}
\newtheorem{remark}{Remark}[section]
\title{Picard groups of moduli spaces of curves with symmetry}
\author{Kevin Kordek}
\date{}
\begin{document}

\maketitle

\begin{abstract}
We study the Picard groups of moduli spaces of smooth complex projective curves that have a group of automorphisms with a prescribed topological action. One of our main tools is the theory of symmetric mapping class groups. In the first part of the paper, we show that, under mild restrictions, the moduli spaces of smooth curves with an abelian group of automorphisms of a fixed topological type have finitely generated Picard groups. In certain special cases, we are able to compute them exactly. In the second part of the paper, we show that finite abelian level covers of the hyperelliptic locus in the moduli space of smooth curves have finitely generated Picard groups. We also compute the Picard groups of the moduli spaces of hyperelliptic curves of compact type.
\end{abstract}

\section{Introduction}
Assume that $g\geq 2$ and let $S_g$ denote a closed orientable reference surface of genus $g$. The \emph{mapping class group} $\text{Mod}(S_g)$ is the group of orientation-preserving homeomorphisms of $S_g$ modulo isotopy. 

Let $H$ be a finite subgroup of $\text{Mod}(S_g)$. It follows from the solution of the Nielsen realization problem
 that there is a complex structure on $S_g$ such that $H$ lifts to an action on $S_g$ by automorphisms of this complex structure. In 
 the moduli space $\cM_g^H$ of genus $g$ curves with a group of automorphisms acting topologically like $H$ was constructed as the quotient of a contractible complex manifold by the \emph{symmetric mapping class group} $\text{Mod}_H(S_g)$, i.e. the normalizer of $H$ in $\text{Mod}(S_g)$. It follows from this construction that $\cM_g^H$ is naturally a \emph{quasiprojective orbifold}.

The moduli spaces $\cM_g^H$ generalize the moduli space $\cM_g$ of smooth curves of genus $g$, which can be recovered by taking $H = 1$. A familiar non-trivial example is the moduli space $\cHH_g$ of hyperelliptic curves; it can be constructed, as an orbifold, as the quotient of a contractible complex manifold by the \emph{hyperelliptic mapping class group}  $\text{Mod}_{\langle \sigma\rangle}(S_g)$ where $\sigma$ is the class of a hyperelliptic involution (see Figure \ref{hyperellipticinvolution} below).

\begin{figure}[t]
\centering
\includegraphics[scale = .35]{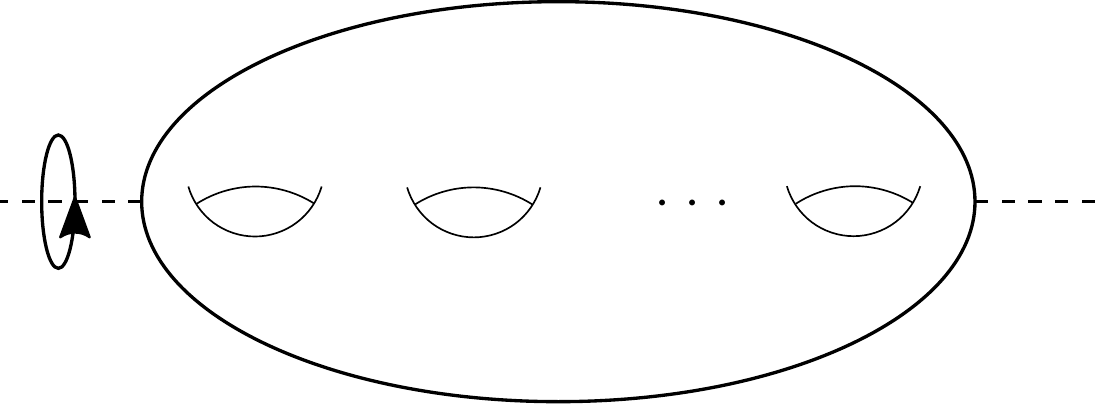}
\caption{\small Rotation about the indicated axis by $180^{o}$ gives a topological picture of a hyperelliptic involution.}
\label{hyperellipticinvolution}
\end{figure}

The principal aim of this paper is to use the structure of $\text{Mod}_H(S_g)$ to prove results about the geometry of $\cM_g^H$ and related moduli spaces of curves, especially the structure of their Picard groups. Our main results concern (1) moduli spaces of  smooth curves with abelian symmetries, i.e. the moduli spaces $\cM_g^H$ with $H$ abelian and (2) moduli spaces of smooth hyperelliptic curves with level structures and hyperelliptic curves of compact type.

\vspace{.1in}
Working entirely in the category of schemes, several authors have studied the enumerative geometry of various moduli stacks of curves with abelian symmetries (see, for example, \cite{arsievistoli}, \cite{pagani}, \cite{pomatalpotonini}). As far as the author can tell, the orbifold approach to this subject, in which symmetric mapping class groups play a central role, is novel. Where there is overlap, our results appear to agree. Although the main results of this paper concern groups of abelian symmetries, our approach allows us to prove some basic results concerning groups of nilpotent symmetries, which do not appear to have been dealt with in this context previously. We remark that our general setup can easily accommodate arbitrary groups of nonabelian symmetries.

\vspace{.1in}
We now give an overview of our results.

\subsection{Moduli of smooth curves with symmetry}
In the first part of this paper, we study the Picard group $\text{Pic}\ \cM_g^H$ of orbifold algebraic line bundles on $\cM_g^H$. Our approach is modeled after that of Hain \cite{hain94}, Putman \cite{putmanpicard} and Randal-Williams \cite{randalwilliams}, who studied the Picard groups of various moduli spaces of curves via the (orbifold) first Chern class homomorphism; in the present setting, this is a homomorphism
$$\text{Pic}\ \cM_g^H\rightarrow H^2(\text{Mod}_H(S_g),\ZZ)$$
which associates to a line bundle on $\cM_g^H$ its first Chern class. 

The features of this map are tightly controlled by the low-degree cohomology of $\text{Mod}_H(S_g)$. Therefore, much of the work in this paper consists in studying the structure of $\text{Mod}_H(S_g)$. To this end, we shall make heavy use of the \emph{Birman--Hilden} theory of symmetric mapping class groups, in which one views $\text{Mod}_H(S_g)$ as the subgroup of $\text{Mod}(S_g)$ consisting of the classes of those homeomorphisms that preserve the (branched) covering map $S_g\rightarrow S_g/H$. 

\vspace{.1in}
Our first main result concerns the structure of $\text{Pic}\ \cM_g^H$ when $H < \text{Mod}(S_g)$ is a finite abelian group. We begin by noting that, since $\text{Mod}_H(S_g)$ is finitely presentable, the Universal Coefficients Theorem implies that the torsion subgroup of $H^2(\text{Mod}_H(S_g), \ZZ)$ can be identified with the torsion subgroup of $H_1(\text{Mod}_H(S_g),\ZZ)$.

\vspace{.1in}
\begin{theorem}\label{maintheorem1}
Let $g\geq 2$ and suppose that $H < \text{Mod}(S_g)$ is a finite abelian group. Fix a complex structure on $S_g$ upon which $H$ acts by automorphisms.
\begin{enumerate}
\item \label{part1}  If $S_g/H\cong \PP^1$ then $\text{Pic}\ \cM_g^H$ is finite. If, additionally, $H_1(\text{Mod}_H(S_g),\hspace{.025in} \ZZ)$ is finite, then the first Chern class induces an isomorphism 
$$\text{Pic}\ \cM_g^H\xrightarrow{\cong} H_1(\text{Mod}_H(S_g),\ZZ) \subset H^2(\text{Mod}_H(S_g),\ZZ).$$ 
\item \label{part2}  If $S_g/H$ has genus at least $3$ then $H_1(\text{Mod}_H(S_g),\hspace{.025in} \ZZ)$ is finite, and $\text{Pic}\ \cM_g^H$ is finitely generated with torsion subgroup isomorphic to $H_1(\text{Mod}_H(S_g),\hspace{.025in} \ZZ)$.
\end{enumerate}
\end{theorem}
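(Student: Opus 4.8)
Throughout set $\Gamma := \text{Mod}_H(S_g)$, and recall from \cite{gonzalezdiezharvey} that $\cM_g^H$ is the quotient of a contractible complex manifold by $\Gamma$, so that it is an orbifold $K(\Gamma,1)$ with $H^{*}(\cM_g^H,\ZZ) \cong H^{*}(\Gamma,\ZZ)$. The plan is to study the first Chern class $c_1 \colon \Pic\ \cM_g^H \to H^2(\Gamma,\ZZ)$ and to reduce both parts to the vanishing of a single invariant. For a smooth quasiprojective variety the kernel of $c_1$, namely the subgroup of topologically trivial algebraic line bundles, is a quotient of the Picard variety $\Pic^0(\overline{\cM_g^H})$ of a smooth compactification, an abelian variety of dimension $q := h^1(\overline{\cM_g^H},\cO)$; hence when $q = 0$ the map $c_1$ is injective and identifies $\Pic\ \cM_g^H$ with its N\'eron--Severi group $NS \subseteq H^2(\Gamma,\ZZ)$. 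Since $\cM_g^H$ has the homotopy type of a finite complex, $H^2(\Gamma,\ZZ)$ and therefore $NS$ are finitely generated, and as $q$ equals half the dimension of the weight-one part of $H^1(\cM_g^H,\QQ)$, the condition $q=0$ holds whenever $H^1(\cM_g^H,\QQ)=0$ or $\cM_g^H$ is unirational. Finally, every finite-order class in $H^2(\Gamma,\ZZ)$ is the first Chern class of a torsion line bundle, which is the bundle of an \'etale $\mu_n$-torsor and hence algebraic; so $\text{tors}\,H^2(\Gamma,\ZZ)$ lies in the image of $c_1$, and when $q=0$ the injectivity of $c_1$ gives $\text{tors}\,\Pic\ \cM_g^H \cong \text{tors}\,H^2(\Gamma,\ZZ) \cong \text{tors}\,H_1(\Gamma,\ZZ)$, the last identification being the one recorded before the theorem. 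It therefore suffices, in each case, to verify $q=0$ and to control the rank of $NS$.

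For part (\ref{part2}) I would compute $H_1(\Gamma,\ZZ)$ by Birman--Hilden theory. Viewing $\Gamma$ as the mapping classes preserving the branched cover $S_g \to S_g/H$, there is a homomorphism from $\Gamma$ to the mapping class group of the quotient orbifold, whose underlying surface has genus $h = \text{genus}(S_g/H) \geq 3$, with kernel and cokernel built from the finite data of the deck group $H$ and the branch points. Since the mapping class group of a closed surface of genus at least $3$ is perfect, and the branch-point contributions to the abelianization (point-pushing subgroups and twists about curves encircling a branch point) are torsion, a generators-and-relations analysis of this extension shows that $H_1(\Gamma,\ZZ)$ is finite. This forces $H^1(\cM_g^H,\QQ)=0$, hence $q=0$, so $\Pic\ \cM_g^H = NS$ is finitely generated with torsion subgroup isomorphic to $H_1(\Gamma,\ZZ)$, which is part (\ref{part2}).

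For part (\ref{part1}) the quotient is $\PP^1$ and $H_1(\Gamma,\ZZ)$ may be infinite, so I argue differently. Parametrizing an abelian cover of $\PP^1$ by its branch points together with the discrete abelian monodromy data gives a dominant rational map to $\cM_g^H$ from an open subset of a product of projective lines, so $\cM_g^H$ is unirational, $q=0$, and $\Pic\ \cM_g^H = NS$ is again finitely generated with $c_1$ injective. To see that $NS$ has rank $0$, I would use that Birman--Hilden makes $\Gamma$ commensurable with the mapping class group of the $n$-punctured sphere, so that $\cM_g^H$ is related by finite correspondences to the moduli space $\cM_{0,n}$ of $n$ points on $\PP^1$; since $\Pic\ \cM_{0,n}$ is finite --- Keel's description of $\Pic\ \overline{\cM}_{0,n}$ by boundary divisors gives $NS(\cM_{0,n})_\QQ = 0$ --- transporting this across the correspondence yields $NS(\cM_g^H)_\QQ = 0$ and hence $\Pic\ \cM_g^H$ finite. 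The supplementary statement follows at once: if moreover $H_1(\Gamma,\ZZ)$ is finite, the finite group $NS = \Pic\ \cM_g^H$ equals its torsion, which by the first paragraph is $\cong H_1(\Gamma,\ZZ)$, so $c_1$ is the asserted isomorphism onto $H_1(\Gamma,\ZZ) \subseteq H^2(\Gamma,\ZZ)$.

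The main obstacles lie in these two inputs. In part (\ref{part2}) the crux is the Birman--Hilden abelianization computation: extracting the finiteness of $H_1(\Gamma,\ZZ)$ from the extension relating $\Gamma$ to the mapping class group of the quotient orbifold, while carefully tracking the deck-group and branch-point contributions. In part (\ref{part1}) the delicate point is the control of the rank of $NS$ through the finite correspondence with $\cM_{0,n}$, since Picard rank can increase under finite covers and the comparison must be made explicit. By contrast, once these are in hand the reduction to $q=0$ and the torsion identification with $H_1(\Gamma,\ZZ)$ are formal.
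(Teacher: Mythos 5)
There is a genuine gap, and it sits exactly where you locate the ``crux'' of part (\ref{part2}). Your plan is to deduce finiteness of $H_1(\Gamma,\ZZ)$ from the perfectness of $\text{Mod}(S_h)$ ($h\geq 3$) together with a ``generators-and-relations analysis'' of the extension relating $\Gamma$ to the quotient mapping class group. This cannot work as stated: by Birman--Hilden, $\Gamma/H \cong \text{LMod}_p(S_h,\textbf{P})$ is only a \emph{finite-index} subgroup of $\text{Mod}(S_{h,n})$, and perfectness (or finiteness of abelianization) does not pass to finite-index subgroups. Indeed, whether every finite-index subgroup of $\text{Mod}(S_h)$, $h\geq 3$, has finite abelianization is precisely Ivanov's open conjecture, which the paper discusses in Section \ref{sectionhighergenus}; your sketch never uses the hypothesis that $H$ is abelian, so if it worked it would settle the question for arbitrary finite $H$, which is open. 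The paper's actual mechanism is: because $H$ is abelian, liftability of a homeomorphism of the quotient is tested on the kernel of $H_1(S_h\setminus \textbf{P},\ZZ)\to H$, and a separating twist acts trivially on $H_1$ of the punctured quotient, hence lifts (Lemma \ref{dehntwistslift}); therefore $\text{LMod}_p(S_h,\textbf{P})$ contains the Johnson subgroup $\cK(S_{h,n})$, and Putman's theorem (Theorem \ref{putmanstheorem}: finite-index subgroups of $\text{PMod}(S_{h,n})$ containing the Johnson subgroup have vanishing rational $H_1$ for $h\geq 3$) combined with the 5-term exact sequence gives $H_1(\Gamma,\QQ)=0$. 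That is the step your proposal is missing, and it is the only currently known route.

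Part (\ref{part1}) has a parallel, smaller gap: you correctly flag that Picard rank can grow under finite covers, but the hedge ``related by finite correspondences to $\cM_{0,n}$'' does not repair this. Mere commensurability of $\Gamma$ with $\text{Mod}(S_{0,n})$ only produces a common finite cover $Z$ of $\cM_{0,n}$ and $\cM_g^H$; then $\Pic\,\cM_g^H\otimes\QQ$ injects into $\Pic\, Z\otimes\QQ$, and finite covers of $\cM_{0,n}$ can very well have nonzero rational Picard group, so triviality does not transport. What is needed is a finite surjection \emph{from} $\cM_{0,n}$ \emph{onto} $\cM_g^H$, and this is exactly what the paper's Lemma \ref{abeliancoverlemma} supplies --- again using that $H$ is abelian: pure mapping classes act trivially on $H_1$ of the punctured sphere, hence preserve the kernel of $H_1\to H$ and lift, so $\text{PMod}(S_0,\textbf{P})\vartriangleleft \text{LMod}_p(S_0,\textbf{P})$ and $\cM_{0,n}\to\cM_g^H$ is a finite Galois cover. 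One then gets $A_*(\cM_g^H)_\QQ\cong A_*(\cM_{0,n})^G_\QQ\cong\QQ$ (Fulton), hence $\Pic\otimes\QQ=0$, and Knop--Kraft--Vust transfers finiteness to the orbifold Picard group. One more caution: your claim that $c_1$ is injective in part (\ref{part1}) ``since $q=0$'' conflates the coarse variety with the orbifold; injectivity of the orbifold $c_1$ requires $\text{Pic}^0(X,G)=0$, which by Hain's Theorem \ref{hainpic0theorem} needs the orbifold $H^1$ to vanish, i.e.\ finiteness of $H_1(\Gamma,\ZZ)$ --- which is why the paper states the isomorphism only under that additional hypothesis. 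Your torsion identification via flat line bundles and the Universal Coefficients Theorem does match the paper's use of Putman's Theorem \ref{putmanpicardtheorem} and is fine once these inputs are in place.
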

\vspace{.1in}


In Section \ref{nilpotentsymmetries} we will explain how to partially extend these results to finite nilpotent $H$ provided the covering map $S_g\rightarrow S_g/H$ is unbranched. 

\vspace{.1in}
An important test case is the moduli space $\cHH_g$ of hyperelliptic curves. Work of Birman--Hilden \cite{birmanhilden} implies that the abelianization of the hyperelliptic mapping class group is cyclic of order $4g+2$ when $g$ is even and order $8g+4$ when $g$ is odd. Part \ref{part1} of Theorem \ref{maintheorem1} then immediately implies the following.

\vspace{.1in}
\begin{corollary}\label{hyperellipticcorollary}
For all $g\geq 2$, we have 
\begin{equation*}
\emph{Pic}\ \cHH_g \cong\ \left\{
\begin{array}{lr}
\ZZ/(4g+2) \ZZ& g\ \text{even}\\
\ZZ/(8g+4) \ZZ& g\ \text{odd}
\end{array}
\right.
\end{equation*}
\end{corollary}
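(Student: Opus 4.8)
The plan is to derive Corollary \ref{hyperellipticcorollary} as a direct application of Part \ref{part1} of Theorem \ref{maintheorem1} to the case $H = \langle \sigma \rangle$, where $\sigma$ is the class of a hyperelliptic involution. The first observation is that the hypotheses of Part \ref{part1} are satisfied: here $H$ is cyclic, hence abelian, and the quotient $S_g/H$ is the sphere $\PP^1$, since a hyperelliptic involution is precisely a $\ZZ/2\ZZ$-action whose quotient has genus zero (the $2g+2$ fixed points become the branch points of the double cover $S_g \to \PP^1$). Moreover, $\text{Mod}_H(S_g)$ is exactly the hyperelliptic mapping class group $\text{Mod}_{\langle \sigma \rangle}(S_g)$, so the moduli space $\cM_g^H$ is the moduli space $\cHH_g$ of hyperelliptic curves discussed in the introduction.

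Next I would invoke the computation of Birman-Hilden \cite{birmanhilden}, quoted in the excerpt, which gives the abelianization $H_1(\text{Mod}_{\langle \sigma \rangle}(S_g), \ZZ)$ explicitly as the cyclic group $\ZZ/(4g+2)\ZZ$ when $g$ is even and $\ZZ/(8g+4)\ZZ$ when $g$ is odd. In particular this abelianization is finite in both cases, so the second, stronger conclusion of Part \ref{part1} applies: the first Chern class induces an isomorphism
$$\text{Pic}\ \cHH_g \xrightarrow{\cong} H_1(\text{Mod}_{\langle \sigma \rangle}(S_g), \ZZ).$$
Combining this isomorphism with the Birman-Hilden computation of the right-hand side yields the stated values of $\text{Pic}\ \cHH_g$ in the two parity cases, completing the argument.

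Since Theorem \ref{maintheorem1} and the Birman-Hilden result are being taken as given, there is essentially no obstacle in this corollary; the only genuinely substantive point requiring care is verifying that the finiteness hypothesis needed for the isomorphism statement of Part \ref{part1} is in force. That hypothesis is precisely the finiteness of $H_1(\text{Mod}_H(S_g), \ZZ)$, which the cyclic-group computation supplies immediately. I would therefore present the proof as a short two-line deduction, checking the parity-dependent orders against the two branches of Part \ref{part1} and recording that the resulting Picard group is finite cyclic of the claimed order.
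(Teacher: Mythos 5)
Your proposal is correct and follows exactly the paper's own route: the paper deduces the corollary by applying Part \ref{part1} of Theorem \ref{maintheorem1} to $H = \langle\sigma\rangle$ (noting $S_g/\langle\sigma\rangle \cong \PP^1$) together with the Birman--Hilden computation that $H_1(\text{Mod}_{\langle\sigma\rangle}(S_g),\ZZ)$ is cyclic of order $4g+2$ for $g$ even and $8g+4$ for $g$ odd, whose finiteness activates the isomorphism clause. Your added care in verifying the hypotheses, including the finiteness condition, matches the paper's (more terse) reasoning precisely.
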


Work of Arsie--Vistoli  \cite{arsievistoli} shows that the Picard group of the moduli stack of smooth hyperelliptic curves over $\CC$ is cyclic of order $4g+2$ when $g$ is even and $8g+4$ when $g$ is odd. Corollary \ref{hyperellipticcorollary} is an analogue of their result in the setting of orbifolds. 
 
\subsection{Hyperelliptic curves}
In the second part of the paper, we focus on the geometry of the hyperelliptic loci in various moduli spaces of curves. Specifically, we shall consider two variants of the hyperelliptic locus in $\cM_g$, namely  
\begin{itemize}
\item The hyperelliptic loci in the finite abelian level covers of $\cM_g$
\item The moduli space of hyperelliptic curves of compact type.
\end{itemize}
We now summarize our results. 
\subsubsection{Hyperelliptic curves with level structures}
Let $\cM_g[m]$ denote the moduli space of smooth genus $g$ curves $C$ with a \emph{level $m$ structure}, i.e. a symplectic basis for $H_1(C,\ZZ/m\ZZ)$. The hyperelliptic locus $\cM_g^{hyp}[m]$  in $\cM_g[m]$ has a unique component when $m$ is odd and many (mutually isomorphic), disjoint components when $m$ is even. Let $\cHH_g[m]$ denote one of these components. 

\vspace{.1in}
By combining Hodge-theoretic techniques developed in \cite{hain94} with the results of \cite{brendlemargalitputman} on the structure of the \emph{hyperelliptic Torelli group}, which is the kernel of the canonical map $\text{Mod}_{\langle \sigma \rangle}(S_g)\rightarrow \Sp_g(\ZZ)$, we are able to prove the following result.
\vspace{.1in}

\begin{theorem}\label{hyperelliptictheorem}
For each $m\geq 1$, $\text{Pic}\ \cHH_g[m]$ is finitely generated.
\end{theorem}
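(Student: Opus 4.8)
The plan is to realize $\cHH_g[m]$ as an orbifold $K(\pi,1)$ and reduce the finite generation of its Picard group to a single Hodge-theoretic vanishing statement, following the template of Hain and Putman. Write $\Gamma$ for the finite-index subgroup of $\text{Mod}_{\langle\sigma\rangle}(S_g)$ consisting of classes acting trivially on $H_1(S_g,\ZZ/m\ZZ)$; this is the orbifold fundamental group of $\cHH_g[m]$, and for $m\geq 3$ it is torsion free, so that $\cHH_g[m]$ is a smooth quasiprojective variety with $\text{Pic}\ \cHH_g[m]=H^1(\cHH_g[m],\mathcal{O}^*)$. (The cases $m=1,2$ can be reduced to this one by passing to a finite cover and running the same argument on orbifold cohomology.) The exponential sequence gives
$$H^1(\cHH_g[m],\mathcal{O}) \to \text{Pic}\ \cHH_g[m] \xrightarrow{c_1} H^2(\Gamma,\ZZ).$$
Since $\Gamma$ is finite index in the finitely presented group $\text{Mod}_{\langle\sigma\rangle}(S_g)$, it is itself finitely presented, so $H^2(\Gamma,\ZZ)$ is finitely generated and hence so is the image of $c_1$. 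It therefore remains to control the kernel, namely $\text{coker}\big(H^1(\cHH_g[m],\ZZ)\to H^1(\cHH_g[m],\mathcal{O})\big)$. As this is a quotient of the finite-dimensional complex vector space $H^1(\cHH_g[m],\mathcal{O})$ by the image of a finitely generated group, it is finitely generated precisely when that vector space vanishes. By the Hodge theory of smooth varieties this vanishing is equivalent to $\text{Gr}^W_1 H^1(\cHH_g[m],\QQ)=\text{Gr}^W_1 H^1(\Gamma,\QQ)=0$.

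To establish this vanishing I would use the Birman-Hilden description of $\Gamma$ as an extension
$$1 \to \cT_g^{hyp} \to \Gamma \to \Lambda \to 1,$$
where $\cT_g^{hyp}$ is the hyperelliptic Torelli group and $\Lambda$ is the image of $\Gamma$ in $\Sp_g(\ZZ)$. By a theorem of A'Campo the image of the full hyperelliptic mapping class group in $\Sp_g(\ZZ)$ contains a congruence subgroup, so $\Lambda$ has finite index in $\Sp_g(\ZZ)$; since $\Sp_{2g}(\ZZ)$ has Kazhdan's property (T) for $g\geq 2$, every finite-index subgroup does as well, and hence $H^1(\Lambda,\QQ)=0$. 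The inflation-restriction sequence, which in Hain's framework is a sequence of mixed Hodge structures, then yields an injection
$$H^1(\Gamma,\QQ)\hookrightarrow H^1(\cT_g^{hyp},\QQ)^{\Lambda},$$
reducing the problem to showing that $H^1(\cT_g^{hyp},\QQ)$ carries no weight-one classes.

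The heart of the argument, and the step I expect to be the main obstacle, is this final Hodge-theoretic input. Here I would combine the results of Brendle-Margalit-Putman, which show that $\cT_g^{hyp}$ is generated by Dehn twists about symmetric separating simple closed curves, with Hain's construction of a mixed Hodge structure on the abelianization of a Torelli-type group via the relative (Malcev) completion of the family over $\cHH_g$. Because separating twists lie in the Johnson kernel — they die under the weight-one Johnson quotient and are detected only at the next, weight-two, level — generation by such twists forces $H_1(\cT_g^{hyp},\QQ)$ to have weights $\leq -2$, equivalently $H^1(\cT_g^{hyp},\QQ)$ to have weights $\geq 2$. In particular $\text{Gr}^W_1 H^1(\cT_g^{hyp},\QQ)=0$, so the injection above gives $\text{Gr}^W_1 H^1(\Gamma,\QQ)=0$, whence $H^1(\cHH_g[m],\mathcal{O})=0$. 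Together with the finite generation of $H^2(\Gamma,\ZZ)$, this proves that $\text{Pic}\ \cHH_g[m]$ is finitely generated.

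The point worth stressing is that the argument needs only a \emph{weight bound} on $H_1(\cT_g^{hyp},\QQ)$, not a complete computation of the abelianization, which is what makes the Brendle-Margalit-Putman generation result exactly the right tool. The delicate matters requiring care are the precise compatibility of their generating set with the weight filtration on Hain's mixed Hodge structure, the verification that the maps in the inflation-restriction sequence are genuinely morphisms of mixed Hodge structures in this setting, and the bookkeeping needed to pass from the torsion-free covers $m\geq 3$ back to the orbifold cases $m=1,2$.
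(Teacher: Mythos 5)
Your endgame---reducing finite generation to the vanishing of $\text{Gr}^W_1H^1(\cHH_g[m],\QQ)$ and invoking Hain's criterion (Proposition \ref{hainpic0proposition})---is exactly the paper's, but your route to that vanishing has a genuine gap at the step you yourself flag as the heart of the argument. There is no off-the-shelf mixed Hodge structure on $H^1(S\cI(S_g),\QQ)$ for which ``generated by symmetric separating twists $\Rightarrow$ weights $\geq 2$'' is a theorem: the hyperelliptic Torelli space is, like Torelli space, a complex manifold that is not an algebraic variety, so Deligne's functorial MHS does not apply to it, and your inflation--restriction map has no established MHS-morphism structure. Worse, $H^1(S\cI(S_g),\QQ)$ need not be finite-dimensional: for $g=2$ one has $\Delta_2 = \text{Mod}(S_2)$, so $S\cI(S_2) = \cI(S_2)$, which by a theorem of Mess is an infinitely generated free group, and the weight bookkeeping you propose has no framework in that setting. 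The heuristic ``separating twists die under Johnson, hence have weight $-2$'' also presupposes that the weight $-1$ quotient of $H_1(S\cI(S_g))$ is detected exactly by the Johnson homomorphism, an unproved analogue for $S\cI(S_g)$ of Johnson's computation for $\cI(S_g)$. Finally, your reduction of $m=1,2$ to the smooth case is hand-waved; the paper disposes of $m=2$ by a different, elementary argument ($\cHH_g[2]\cong \cM_{0,2g+2}$ at the level of varieties, plus Lemma \ref{picardlemma}), and $m=1$ is already covered by Corollary \ref{hyperellipticcorollary}.

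The paper uses the same input from Brendle--Margalit--Putman \cite{brendlemargalitputman}, but in geometric rather than group-cohomological form: their generation theorem yields that the compact-type hyperelliptic Torelli space $\cHH_g^c[0]$ is simply connected, whence $H^1(\cHH_g^c[m],\QQ)\cong H^1(G_g[m],\QQ)$ by (\ref{H1hyplocuslevel}), and this vanishes since $G_g[m]$ has finite index in $\Sp_g(\ZZ)$ by A'Campo's theorem \cite{acampo}. One then realizes $\cHH_g[m]$ as the complement of a divisor $D$ in $\cHH_g^c[m]$, deletes the singular locus of $D$ (codimension $\geq 2$, so $H^1$ and $H^2$ are unaffected), and runs the long exact sequence of compactly supported cohomology, which is a sequence of MHS for genuine varieties: since $H^{2n-1}_c$ of the ambient space vanishes and $H^{2n-2}_c(D^*,\QQ)$ is a sum of Tate classes $\QQ(-n+1)$, Poincar\'{e} duality gives that $H^1(\cHH_g[m],\QQ)$ is pure of weight $2$ (Proposition \ref{hodgestructurewt2}). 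This uses only standard functoriality of mixed Hodge theory and avoids completions and infinite-dimensionality entirely; in effect, the geometric purity argument is what would be needed to substantiate your weight bound on $H^1(S\cI(S_g),\QQ)^\Lambda$ in the first place, so the detour through the hyperelliptic Torelli group's abelianization does not shorten the proof.
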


\vspace{.1in}

We shall see that the Picard groups of both $\cHH_g$ and $\cHH_g[2]$ are finite and non-trivial. However, it is not yet clear whether $\text{Pic}\ \cHH_g[m]$ is finite or even if it is non-trivial when $m\geq 3$.

\subsubsection{Hyperelliptic curves of compact type}
\vspace{.1in}
A \emph{curve of compact type} is a stable nodal curve all of whose components are smooth and whose dual graph is a tree. The hyperelliptic locus $\cHH_g^c$ in the moduli space of genus $g$ curves of compact type is naturally a quasiprojective orbifold; we will construct it explicitly in Section \ref{hyperellipticsection} as the quotient of a simply-connected (but not contractible) complex manifold by an certain subgroup of the symplectic group $\Sp_g(\ZZ)$. Using this explicit description, we prove the following.

\vspace{.1in}

\begin{theorem}\label{picardhyperellipticcompacttype}
For each $g\geq 2$ we have
$\displaystyle \text{Pic}\ \cHH_g^c \cong \ZZ^{\floor{\frac{g}{2}}}\oplus
\left\{
\begin{array}{lr}
\ZZ/2\ZZ& g\ \text{even}\\
\ZZ/4\ZZ& g\ \text{odd}
\end{array}
\right.
$
\end{theorem}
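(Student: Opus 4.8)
The plan is to use the explicit presentation $\cHH_g^c = [X/\Gamma]$ from Section~\ref{hyperellipticsection}, where $X$ is simply-connected and $\Gamma < \Sp_g(\ZZ)$, and to read off $\Pic\,\cHH_g^c$ as the group $\Pic_\Gamma(X)$ of $\Gamma$-equivariant line bundles on $X$. The organizing tool is the equivariant Picard exact sequence
\[
0 \to H^1(\Gamma, \cO_X(X)^*) \to \Pic_\Gamma(X) \to \Pic(X)^\Gamma \xrightarrow{\ \delta\ } H^2(\Gamma, \cO_X(X)^*).
\]
First I would record the cohomological input on $X$: that $\cO_X(X)^* = \CC^*$ and $H^1(X,\cO_X)=H^2(X,\cO_X)=0$, so that $\Pic(X)=H^2(X,\ZZ)$ and the first Chern class is injective. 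The decomposition is then conceptual: the torsion of $\Pic\,\cHH_g^c$ comes entirely from $H^1(\Gamma,\CC^*)=\mathrm{Hom}(H_1(\Gamma,\ZZ),\CC^*)$, i.e.\ the abelianization of $\Gamma$, while the free part is $\Pic(X)^\Gamma$. It is exactly the non-contractibility of $X$, forced by the compact-type boundary, that produces a nonzero free part; over the open locus $\cHH_g$ the analogous space is contractible and one recovers the purely torsion answer of Corollary~\ref{hyperellipticcorollary}.

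Next I would compute the free part $\Pic(X)^\Gamma = H^2(X,\ZZ)^\Gamma$. Viewing a hyperelliptic curve as a double cover of $\PP^1$ branched at $2g+2$ points, a compact-type (separating) node corresponds to a partition of the branch points into two odd blocks of sizes $2h+1$ and $2(g-h)+1$, so the boundary divisors are $\Delta_1,\dots,\Delta_{\floor{g/2}}$. Their preimages in $X$ are $\Gamma$-invariant divisors, hence give classes in $\Pic(X)^\Gamma$ lifting to $\Pic_\Gamma(X)$; this shows $\delta$ vanishes on them and that $\Pic_\Gamma(X)\to\Pic(X)^\Gamma$ is surjective. I would then show these $\floor{g/2}$ classes are linearly independent and span $H^2(X,\ZZ)^\Gamma$ — independence is the standard independence of distinct irreducible boundary divisors, while spanning amounts to checking via a Gysin sequence that the glued-in strata account for all $\Gamma$-invariant second cohomology of $X$. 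This identifies the free part as $\ZZ^{\floor{g/2}}$.

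The arithmetic heart is the computation of $H_1(\Gamma,\ZZ)$. Since $\Gamma$ is the image of $\text{Mod}_{\langle\sigma\rangle}(S_g)$ in $\Sp_g(\ZZ)$, we have $\Gamma = \text{Mod}_{\langle\sigma\rangle}(S_g)/\mathcal{SI}_g$, and by \cite{brendlemargalitputman} the hyperelliptic Torelli group $\mathcal{SI}_g$ is generated by Dehn twists about symmetric separating curves — precisely the vanishing cycles of the degenerations above. Thus $H_1(\Gamma,\ZZ)$ is the quotient of $H_1(\text{Mod}_{\langle\sigma\rangle}(S_g),\ZZ)$ by the classes of those twists. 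By \cite{birmanhilden} the source is cyclic of order $N=4g+2$ ($g$ even) or $8g+4$ ($g$ odd), generated by the class $t$ of a nonseparating symmetric twist. Applying the chain relation to the sub-chain cutting off a genus-$h$ piece gives $(t_{c_1}\cdots t_{c_{2h}})^{4h+2}=T_{c}$, so a separating twist has class $4h(2h+1)\,t$; a short gcd computation shows these generate $\langle 2\rangle$ when $g$ is even and $\langle 4\rangle$ when $g$ is odd, whence $H_1(\Gamma,\ZZ)\cong\ZZ/2$ for $g$ even and $\ZZ/4$ for $g$ odd.

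Finally the equivariant Picard sequence becomes $0 \to H_1(\Gamma,\ZZ) \to \Pic\,\cHH_g^c \to \ZZ^{\floor{g/2}} \to 0$, which splits because the quotient is free, giving the stated isomorphism. The hard part will be the torsion computation — in particular pinning down the exponent $4h+2$ in the chain relation, since an off-by-a-factor error collapses $\ZZ/4$ to $\ZZ/2$ and destroys the even/odd dichotomy; a useful internal check is the consistency forced by the fact that the genus-$h$ and genus-$(g-h)$ sub-chains bound the \emph{same} separating curve. The two supporting geometric inputs that also require care are the vanishing $\cO_X(X)^*=\CC^*$ and $H^{>0}(X,\cO_X)=0$ (so that torsion is captured exactly by $H^1(\Gamma,\CC^*)$ and $c_1$ is injective), and the claim that the boundary classes exhaust $H^2(X,\ZZ)^\Gamma$.
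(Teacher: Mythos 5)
Your torsion computation is correct and takes a genuinely different route from the paper's. The paper obtains $H_1(G_g,\ZZ)$ (Proposition \ref{abelianizationGg}) by combining A'Campo's theorem, the fact that $H_1(\Sp_g(\ZZ)[2],\ZZ)$ is $2$-torsion, and the Brendle--Margalit--Putman description of $B\cI_{2g+1}$ via \emph{squares} of twists about curves enclosing $3$ or $5$ marked points (with images $12$ and $40$ in $H_1(B_{2g+1},\ZZ)$), plus a separate argument with the class $\overline{\sigma}$ of the hyperelliptic involution to pin down $\ZZ/4\ZZ$ for odd $g$. You instead quote the other main theorem of \cite{brendlemargalitputman} --- generation of $S\cI(S_g)$ by twists about symmetric separating curves --- and compute the image in $H_1(\Delta_g,\ZZ)$ exactly via the chain relation. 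Your exponent $4h+2$ and the class $4h(2h+1)\,t$ are right, and the gcd computation does close: $\gcd(12,40)=4$, $\gcd(4,2(2g+1))=2$ and $\gcd(4,4(2g+1))=4$, so the quotient is exactly $\ZZ/2\ZZ$ ($g$ even) and $\ZZ/4\ZZ$ ($g$ odd), with no extra step needed for odd $g$ since you compute the full image rather than a bound. Your consistency check ($4h(2h+1)-4(g-h)(2(g-h)+1)=4(h-g+h)(2g+1)$ up to sign, divisible by $N$ in both parities) also works.

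The free part, however, has genuine gaps. First, a framework mismatch: the paper's $\text{Pic}\ \cHH_g^c$ is by definition the group of algebraic $\Gamma$-equivariant line bundles on a quasiprojective finite cover $G_g[m]\backslash \cHH_g^c[0]$, $m\geq 3$, not on $X=\cHH_g^c[0]$ itself. Since $X$ sits inside the Torelli space $\cT_g^c$, which is \emph{not} biholomorphic to an algebraic variety, ``algebraic line bundles on $X$'' is not defined, and your key inputs $\cO_X(X)^*=\CC^*$ and $H^1(X,\cO_X)=H^2(X,\cO_X)=0$ are unsubstantiated claims about a non-algebraic complex manifold (and on the actual finite-level quotients, which are of affine type, the unit group is certainly larger than $\CC^*$). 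The paper avoids all of this by citing Theorems \ref{hainpic0theorem} and \ref{putmanpicardtheorem}, which give injectivity of $c_1$ (using only finiteness of $H_1(G_g,\ZZ)$) and the torsion statement directly in the quasiprojective orbifold setting. Second, and independently, your rank upper bound is missing: the assertion that the $\floor{g/2}$ boundary classes \emph{span} $H^2(X,\ZZ)^{G_g}$ is deferred to ``a Gysin sequence,'' but this requires control of the second cohomology of the open hyperelliptic stratum, which is delicate --- the hyperelliptic locus in Torelli space has infinitely generated topology \cite{kordek} --- and even independence of boundary classes is not automatic on a non-compact space. The paper replaces exactly this step by the computation from \cite{acgh} that $\text{Pic}\ \overline{\cHH}_g\otimes\QQ\cong\QQ^g$ is \emph{freely} generated by the boundary divisors $E_0,\ldots,E_{\floor{(g-1)/2}},D_1,\ldots,D_{\floor{g/2}}$, then deletes the $E_j$ to get $\text{Pic}\ \cHH_g^c\otimes\QQ\cong\QQ^{\floor{g/2}}$. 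Until you either prove your vanishing statements for $X$ and reconcile the holomorphic-equivariant picture with the paper's algebraic definition, or route the rank through $\overline{\cHH}_g$ as the paper does, the free part of your argument is unproven.
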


\vspace{.1in}

The rank computation is geometric, and is readily carried out using known properties of the moduli space of stable hyperelliptic curves. On the other hand, our calculation of the torsion term relies on properties of the braid group and the hyperelliptic Torelli group. 

\subsection{Outline}
In Section \ref{preliminaries} we present background on some tools and ideas which will be used throughout the paper. In Section \ref{sectionsymmetricmappingclassgroups} we assemble the pieces needed to describe the quasiprojective orbifold structure on the moduli spaces $\cM_g^H$. In Section \ref{cohomologyofsphericalmappingclassgroupssection} we compute the rational cohomology of certain subgroups of the spherical mapping class groups for use in Section \ref{cycliccoversofp1section} where we deal with the moduli spaces $\cM_g^H$ for which $S_g/H \cong \PP^1$ and give the proof of the first part of Theorem \ref{maintheorem1}. We also exactly compute the Picard groups of certain moduli spaces of cyclic covers of $\PP^1$. In Section \ref{sectionhighergenus} we deal with the the moduli spaces for which $S_g/H$ has genus at least 3 and discuss a connection with an old conjecture of Ivanov. In Section \ref{curvesofcompacttypesection} we discuss moduli spaces of curves of compact type with level structure. Along the way, we compute the torsion subgroups of their Picard groups. In Section \ref{hyperellipticsection} we study moduli of smooth hyperelliptic curves with abelian level structures; we also study the moduli space of hyperellitiptic curves of compact type. It is in this last section that we prove Theorems \ref{hyperelliptictheorem} and \ref{picardhyperellipticcompacttype}. 

\subsection*{Acknowledgements} The author would like to thank Benson Farb, Dan Margalit, Gregory Pearlstein and Andrew Putman for helpful discussions. Special thanks are due to Tyrone Ghaswala and Becca Winarski for their correspondence and for explaining to the author their work on symmetric mapping class groups of cyclic covers of the sphere. The author would also like to thank Tatsunari Watanabe for many enlightening discussions regarding hyperelliptic mapping class groups. 

\section{Preliminaries}\label{preliminaries} 
\subsection{Mapping class groups}
Let $S^r_{g,n}$ denote a closed, orientable reference surface of genus $g$ with $n$ marked points and $r$ boundary components. We will occasionally find it convenient to think of the marked points as punctures. If either of the decorations $n,r$ is equal to zero, it will be omitted from the notation.  

The mapping class group $\text{Mod}(S^r_{g,n})$ is the group of orientation-preserving homeomorphisms of $S^r_{g,n}$ modulo isotopy. Here we require the homemorphisms to fix the marked points setwise and that the isotopies preserve the set of marked points. Homeomorphisms must restrict to the identity on the boundary. Isotopies must also be constant on the boundary. 

The pure mapping class group $\text{PMod}(S^r_{g,n})$ is the subgroup of $\text{Mod}(S^r_{g,n})$ consisting of those mapping classes that fix the marked points \emph{pointwise}. 

There is a short exact sequence
$$1\rightarrow \text{PMod}(S_{g,n})\rightarrow \text{Mod}(S_{g,n})\rightarrow \mathfrak{S}_n\rightarrow 1$$
where $\mathfrak{S}_n$ is the symmetric group on $n$ letters; here we think of $\mathfrak{S}_n$ as the symmetric group on the $n$ marked points. 

\vspace{.1in}
There is a natural action of $\text{Mod}(S_g)$ on $H_1(S_g,\ZZ)$ that preserves the (symplectic) intersection form. Consequently, we have a homomorphism $\text{Mod}(S_g)\rightarrow \Sp_g(\ZZ)$ to the group of $2g\times 2g$ integral symplectic matrices. It is a classical fact that this map is surjective. For all $n\geq 1$, there is a natural surjective homomorphism $\text{PMod}(S_{g,n})\rightarrow \text{Mod}(S_g)$ obtained by forgetting the marked points. 

\vspace{.1in}
The \emph{Torelli group} $\cI(S_{g,n})$ is the kernel of the composition 
$$\text{PMod}(S_{g,n})\rightarrow \text{Mod}(S_g)\rightarrow \Sp_g(\ZZ).$$ 
It is a torsion-free group.

The \emph{Johnson subgroup} $\cK(S_{g,n})$ of $\cI(S_{g,n})$ is the subgroup generated by all Dehn twists on simple closed curves that separate $S_{g,n}$ into two closed subsurfaces (separating simple closed curves). 

\vspace{.1in}
Let $\Sp_g(\ZZ)$ denote the group of $2g\times 2g$ symplectic matrices with integer entries. For each $m\geq 1$, define the level $m$ subgroup $\Sp_g(\ZZ)[m]$ of $\Sp_g(\ZZ)$ by 
$$\Sp_g(\ZZ)[m] = \left\{M\in \Sp_g(\ZZ)\ |\ M = I_{2g\times 2g}\ \ \text{mod}\ m\right\}.$$
Alternatively, $\Sp_g(\ZZ)[m]$ is the kernel of the canonical homomorphism $\Sp_g(\ZZ)\rightarrow \Sp_g(\ZZ/m\ZZ)$. It is torsion-free provided $m\geq 3$. 

We define the level $m$ subgroup $\text{PMod}(S_{g,n})[m]$ of $\text{PMod}(S_{g,n})$ to be the kernel of the composition $\text{PMod}(S_{g,n})\rightarrow \Sp_g(\ZZ)\rightarrow \Sp_g(\ZZ/m\ZZ)$. It is torsion-free provided $m\geq 3$, a consequence of the fact that $\cI(S_{g,n})$ is torsion-free \cite{hain94}.
\subsection{Group cohomology}
Let $G$ be a group, $R$ a commutative ring, and $M$ an $RG$-module. The module $M^G$ of invariants is the submodule of $M$ spanned by all elements that are fixed by the $G$ action. The module $M_G$ of coinvariants is the maximal $RG$-module quotient on which $G$ acts trivially.  

\vspace{.1in}
Assume that $H$ is a normal subgroup with quotient $Q = G/H$. 

\subsubsection{The 5-term exact sequence}
For any commutative ring $R$, the Hochschild-Serre spectral sequence computes the cohomology of $G$ from the cohomology of $H$ and $Q$. This is a first quadrant spectral sequence given by $$E_2^{p,q} = H^p(Q, H^q(H,R))\implies H^{p+q}(G,R).$$

We will occasionally need to make use of the associated 5-term exact sequence 
$$0\rightarrow H^1(Q,R)\rightarrow H^1(G,R)\rightarrow H^1(H,R)^{Q}\rightarrow H^2(Q,R)\rightarrow H^2(G,R)$$
that comes from this spectral sequence.

\vspace{.1in}
There is a dual version for group homology.  For more details, see \cite{brown}, for example.

\subsubsection{The transfer}
If $H$ is any finite-index subgroup of $G$ (not necessarily a normal subgroup) then for each $k\geq 0$ the inclusion $H\rightarrow G$ induces an injective map $H^k(G,\QQ)\rightarrow H^k(H,\QQ)$. If $H$ is, in addition, normal in $G$, this induces an isomorphism 
$$H^k(G,\QQ)\rightarrow H^k(H,\QQ)^{G/H}.$$
Dually, under these conditions the natural map $H_k(H,\QQ)\rightarrow H_k(G,\QQ)$ is surjective, and induces an isomorphism $H_k(H,\QQ)_{G/H}\rightarrow H_k(G,\QQ)$.

\subsection{Orbifolds}

In this paper, by an \emph{orbifold} we mean a pair $(X,G)$ where
\begin{enumerate}
\item  $X$ is a simply-connected complex manifold.
\item  $G$ is a discrete group acting properly discontinuously $X$ via biholomorphisms such that there is a finite-index subgroup $G' < G$ whose action on $X$ is free.
\end{enumerate}

There is a special class of orbifolds suited for use in algebraic geometry. These are known as \emph{quasiprojective orbifolds}. In this section we present general background material on quasiprojective orbifolds and their Picard groups. More details can be found in \cite{hain2000} or \cite{putmanpicard}, for example. 

\subsubsection{Quasiprojective orbifolds }
Let $X$ be a complex manifold and $G$ a discrete group acting on $X$ biholomorphically on the left. Assume that the action of $G$ on $X$ is virtually free, i.e. $G$ has a subgroup $G'$ of finite index that acts freely. Then $G'\backslash X$ is a complex manifold and $G\backslash X$ is naturally a normal analytic space by a theorem of Cartan \cite[p.257]{acgh}. 

\vspace{.1in}
Assume now that $G\backslash X$ is biholomorphic to a normal complex quasiprojective variety. Once the algebraic structure on $G\backslash X$ is fixed, the generalized Riemann Existence Theorem \cite{mumford} implies that the complex manifold $G'\backslash X$ admits a unique quasiprojective algebraic structure such that the natural map $G'\backslash X\rightarrow G\backslash X$ is a morphism of algebraic varieties. 
With respect to this algebraic structure, $G/G'$ acts on $G'\backslash X$ by automorphisms.

If, in addition, $X$ is simply-connected, the pair $(X,G)$ is a \emph{quasiprojective orbifold} and $(X,G')$ is a \emph{quasiprojective finite cover} of $(X,G)$. We define a \emph{regular quasiprojective finite cover} of $(X,G)$ to be a quasiprojective finite cover $(X,G')$ with $G'$ a normal subgroup of $G$. 

There is a natural sense in which $(X,G')$ can be identified with $G'\backslash X$. If $G' $ is a normal subgroup of $G$, then $(X,G)$ can be thought of as the quotient of $G'\backslash X$, in an orbifold sense, by the finite group $G/G'$.

\subsubsection{The Picard group of a quasiprojective orbifold}

Let $\Gamma$ be a finite group and suppose that $Y$ is a variety with a $\Gamma$-action. A $\Gamma$-equivariant (algebraic) line bundle on $Y$ is an algebraic line bundle $\pi: \cLL\rightarrow Y$ with a $\Gamma$-action such that 
\begin{enumerate}
\item $\pi$ is equivariant with respect to this action and 
\item  the $\Gamma$-action is linear on the fibers of $\pi$. 
\end{enumerate}

Two $\Gamma$-equivariant line bundles on $G'\backslash X$ are said to be isomorphic if there is a $\Gamma$-equivariant isomorphism between them.

\vspace{.1in}
Now let $(X,G)$ be a quasiprojective orbifold and suppose that $(X,G')$ is a regular quasiprojective finite cover. Let $\Gamma = G/G'$. The Picard group $\text{Pic}(X,G)$ of $(X,G)$ is the group of isomorphism classes of $\Gamma$-equivariant algebraic line bundles on the smooth variety $G'\backslash X$, where the group operation is induced by tensor product of line bundles. It is readily checked that this definition does not depend upon the choice of regular quasiprojective finite cover $(X,G')$. 

\subsubsection{Chern classes} 
For a complex algebraic variety $X$, the first Chern class is a homomorphism $c_1: \text{Pic}\ X\rightarrow H^2(X,\ZZ)$. By replacing ordinary cohomology with equivariant cohomology, this notion can be adapted to the orbifold setting. 

\vspace{.1in}
We begin this section by reviewing some basic facts about equivariant cohomology. We then discuss some general properties of Chern class homomorphism on a quasiprojective orbifold. 

\vspace{.1in}
For a topological space $X$ and a discrete group $G$ acting properly discontinuously on $X$, the homotopy quotient of $X$ with respect to $G$ is the quotient space
$$E\Gamma\times_{G} X= G \backslash\left(EG\times X\right)$$
where $EG$ denotes the universal cover of the classifying space of $G$ (on which $G$ acts freely) and $G$ acts on the product diagonally. The $k$th equivariant cohomology group of $X$ is defined to be 
$$H^k_{G}(X,\ZZ): = H^k(EG \times_{G} X,\ZZ).$$

\vspace{.1in}
The following result gives conditions under which $H^k_G(X,\ZZ) \cong H^k(G,\ZZ)$. 
\begin{proposition}[see, for example, \cite{putmanpicard}]\label{equivariantcohomologyprop}
Assume that $G$ is a discrete group acting properly discontinuously and cellularly on a connected CW-complex $X$. If $H$ is a normal subgroup of $G$ acting freely, then $H^k_{G/H}(H\backslash X,\ZZ)\cong H^k_G(X,\ZZ)$ for all $k\geq 0$. If $X$ is $n$-connected. Then $H^k_G(X,\ZZ)\cong H^k(G,\ZZ)$ for all $0\leq k\leq n$. 
\end{proposition}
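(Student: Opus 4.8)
The plan is to prove Proposition \ref{equivariantcohomologyprop} in two logically independent pieces, mirroring its two sentences. For the first claim, I would start from the definition of the homotopy quotient and exploit the freeness of $H$. Since $H$ acts freely and properly discontinuously on $X$, the projection $X \to H\backslash X$ is a covering map, and $EG \times X$ is a model for a free contractible $G$-space. The key observation is that there is a natural map of homotopy quotients
$$EG\times_G X \longrightarrow E(G/H)\times_{G/H}(H\backslash X)$$
induced by quotienting out the free $H$-action. I would argue this is a homotopy equivalence by showing both sides are classifying spaces for the same fibration, or more concretely by comparing their fibrations over $BG$ and $B(G/H)$: the fiber of $EG\times_G X \to BG$ is $X$, while after dividing by $H$ one obtains a fibration with fiber $H\backslash X$ over $B(G/H)$, and the map on fibers $X \to H\backslash X$ together with the map $BG \to B(G/H)$ assembles into the desired equivalence. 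The cleanest route is probably to note that since $H$ acts freely, $EG \times X$ descends to a free $G/H$-space $H\backslash(EG\times X)$ that is homotopy equivalent to $E(G/H)\times(H\backslash X)$, whence taking $G/H$-quotients and applying $H^k(-,\ZZ)$ yields the stated isomorphism.

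For the second claim, I would use the fibration
$$X \hookrightarrow EG\times_G X \to BG$$
arising from the projection onto the second factor (recall $EG \times_G X \to G\backslash EG = BG$ has fiber $X$). When $X$ is $n$-connected, the $n$-connectivity of the fiber lets me control the Serre spectral sequence
$$E_2^{p,q} = H^p(BG, H^q(X,\ZZ)) \implies H^{p+q}_G(X,\ZZ)$$
in the range $p+q \leq n$. Since $H^q(X,\ZZ) = 0$ for $1 \leq q \leq n$ and $H^0(X,\ZZ) = \ZZ$ (as $X$ is connected), the only contributions to total degree $k \leq n$ come from the bottom row $q=0$, giving $E_2^{p,0} = H^p(BG,\ZZ) = H^p(G,\ZZ)$. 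I would then check there are no differentials into or out of this row in the relevant range, so the edge homomorphism $H^k(G,\ZZ) \to H^k_G(X,\ZZ)$ is an isomorphism for $0 \leq k \leq n$.

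The main obstacle I anticipate is the first part, and specifically verifying carefully that $H\backslash(EG\times X) \simeq E(G/H)\times(H\backslash X)$ as $G/H$-spaces up to the appropriate equivariant homotopy, since one must track the diagonal $G$-action through the quotient and confirm that the residual $G/H$-action matches the diagonal action on the target. The subtlety is that $EG$ is not literally an $E(G/H)$, but because $H$ acts freely on $EG$ the quotient $H\backslash EG$ is a $K(H,1)$-bundle over $B(G/H)$ that is homotopy equivalent (as a $G/H$-space, after the further contractible factor is introduced) to $E(G/H)$ up to the homotopy needed for the Borel construction; making this precise requires the standard but slightly delicate fact that the Borel construction is insensitive to replacing the universal space by any free, appropriately connected model. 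The second part is comparatively routine once the spectral sequence is set up, requiring only the vanishing range and an edge-map identification; I do not expect difficulty there beyond confirming the connectivity hypotheses apply. For both parts I would cite the structural facts about the Borel construction and refer to \cite{brown} or \cite{putmanpicard} for the standard spectral sequence machinery rather than reproving it.
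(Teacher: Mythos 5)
The paper does not prove this proposition at all—it is quoted as a standard fact with a pointer to \cite{putmanpicard}—so there is no in-paper argument to compare against; your proof is the standard one found in that reference, and it is correct. Your ``cleanest route'' for the first claim is exactly right, and the obstacle you flag resolves as you expect: freeness of the residual $G/H$-action on $H\backslash(EG\times X)$ is automatic (it follows from freeness of the $G$-action on $EG\times X$, since $EG$ is free), while the freeness of $H$ on $X$ is used precisely to show that the $G/H$-equivariant projection $H\backslash(EG\times X)\to H\backslash X$ has contractible fibers (copies of $EG$; for a non-free $H$-action the fibers would be classifying spaces of stabilizers), hence is a weak equivalence, after which the invariance of the Borel construction under $G/H$-equivariant weak equivalences between free, properly discontinuous CW models gives $EG\times_G X \simeq E(G/H)\times_{G/H}(H\backslash X)$. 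One small looseness: your aside that $H\backslash EG$ is a ``$K(H,1)$-bundle over $B(G/H)$'' is off—$H\backslash EG$ is itself a $K(H,1)$ covering $BG$ with deck group $G/H$—but your main argument does not rely on this remark. The Serre spectral sequence argument for the second claim is complete as stated: for $k\leq n$ all entries $E_2^{p,q}$ with $p+q\leq n$ and $q\geq 1$ vanish, every differential into or out of the bottom row in this range has source or target zero, and the coefficient system in row $q=0$ is trivial since $X$ is connected, so the edge map $H^k(G,\ZZ)\to H^k_G(X,\ZZ)$ is an isomorphism.
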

Now assume that $(X,G)$ is a quasiprojective orbifold and fix a regular quasiprojective finite cover $(X,G')$. Let $\Gamma = G/G'$. There is an equivariant first Chern class mapping 
$$c_1: \text{Pic}(X,G)\rightarrow H^2_{\Gamma}(G'\backslash X,\ZZ)$$ 
that sends the class of a $\Gamma$-equivariant algebraic line bundle on $G'\backslash X$ to its equivariant Chern class. Concretely, if $\cLL$ is an equivariant algebraic line bundle on $G'\backslash X$, its equivariant first Chern class is the (ordinary) first Chern class of the (topological) line bundle 
$$ E\Gamma \times_{\Gamma} \cLL \rightarrow E\Gamma \times_{\Gamma} G'\backslash X.$$
We define $\text{Pic}^0(X,G)$ to be the kernel of $c_1$.

\vspace{.1in} Let $Y$ be a smooth quasiprojective variety. Standard arguments from Hodge theory can be used to prove that the $\text{Pic}^0(Y) = 0$ if and only if  $H^1(Y,\ZZ) = 0$. The analogous statement is true for orbifolds. 
\begin{theorem}[Hain \cite{hain2000}]\label{hainpic0theorem}
Let $(X,G)$ be a quasiprojective orbifold and $(X,G')$ a regular quasiprojective finite cover. If $H^1_{G/G'}(G'\backslash X,\ZZ) = 0$, then $\text{Pic}^0(X,G) = 0$. 
\end{theorem}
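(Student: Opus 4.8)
The plan is to reduce the orbifold statement to the known quasiprojective (manifold) case via the finite cover. Write $\Gamma = G/G'$ and $Y = G'\backslash X$, which by hypothesis is a smooth quasiprojective variety carrying a $\Gamma$-action with quotient $G\backslash X$. The Picard group $\text{Pic}(X,G)$ is the group of $\Gamma$-equivariant line bundles on $Y$, and $\text{Pic}^0(X,G)$ is the kernel of the equivariant first Chern class $c_1 : \text{Pic}(X,G)\rightarrow H^2_\Gamma(Y,\ZZ)$. So the goal is: given that $H^1_\Gamma(Y,\ZZ)=0$, show every $\Gamma$-equivariant line bundle with vanishing equivariant $c_1$ is equivariantly trivial.

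First I would set up the comparison between the equivariant Picard group and the ordinary Picard group of $Y$ using the Borel construction $Y_\Gamma = E\Gamma\times_\Gamma Y$. The key tool is the exponential sequence on the space $Y_\Gamma$: from $0\rightarrow \ZZ\rightarrow \cO\rightarrow \cO^*\rightarrow 0$ one gets the exact sequence
$$H^1(Y_\Gamma,\cO)\rightarrow H^1(Y_\Gamma,\cO^*)\xrightarrow{c_1} H^2(Y_\Gamma,\ZZ),$$
where $H^1(Y_\Gamma,\cO^*)$ is identified with equivariant line bundles (i.e.\ $\text{Pic}(X,G)$) and the map to $H^2(Y_\Gamma,\ZZ)=H^2_\Gamma(Y,\ZZ)$ is exactly the equivariant $c_1$. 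Thus $\text{Pic}^0(X,G)$ is a quotient of $H^1(Y_\Gamma,\cO)$, and it suffices to show this group vanishes. The natural approach is to relate $H^1(Y_\Gamma,\cO)$ to $H^1(Y,\cO)^\Gamma$ via the Leray spectral sequence of $Y_\Gamma\rightarrow B\Gamma$ (or averaging over the finite group $\Gamma$, whose higher cohomology with $\QQ$-coefficients vanishes), so that coherent-cohomology computations on $Y_\Gamma$ reduce to $\Gamma$-invariants of the corresponding computations on the manifold $Y$.

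The heart of the matter is then the non-equivariant Hodge-theoretic input: for a smooth quasiprojective variety $Y$, the hypothesis $H^1(Y,\ZZ)=0$ forces $H^1(Y,\cO)=0$. This is the standard fact that $\text{Pic}^0(Y)=0 \iff H^1(Y,\ZZ)=0$, which follows because the mixed Hodge structure on $H^1(Y,\CC)$ has $H^1(Y,\cO)=F^0/F^1$ appearing as a subquotient; vanishing of $H^1(Y,\QQ)$ (implied by $H^1(Y,\ZZ)=0$) kills the whole Hodge structure and hence this graded piece. I would extract from the hypothesis $H^1_\Gamma(Y,\ZZ)=0$ the consequence $H^1(Y,\QQ)^\Gamma=0$ (again by the finite-group transfer/spectral sequence, since over $\QQ$ the map $H^1_\Gamma(Y,\QQ)\rightarrow H^1(Y,\QQ)^\Gamma$ is an isomorphism), then feed this into the Hodge-theoretic statement to conclude $H^1(Y,\cO)^\Gamma=0$, and therefore $H^1(Y_\Gamma,\cO)=0$.

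I expect the main obstacle to be making the passage between the equivariant and non-equivariant pictures fully rigorous — specifically, justifying that $H^1(Y_\Gamma,\cO)$ can be computed as $\Gamma$-invariants of $H^1(Y,\cO)$, and that the Hodge-theoretic argument interacts correctly with taking $\Gamma$-invariants. The cleanest route is to invoke averaging: since $\Gamma$ is finite, for any $\QQ$-coefficient (or $\CC$-coefficient coherent) sheaf the projection $Y\rightarrow G\backslash X$ lets one identify invariant cohomology with cohomology downstairs, and the exponential-sequence diagram is compatible with these identifications. One subtlety to handle with care is that $G\backslash X$ may be singular, so the Hodge theory must be run on the smooth cover $Y$ and then descended to invariants, rather than applied directly to the quotient; keeping all coherent computations on the smooth manifold $Y$ and only taking $\Gamma$-invariants at the level of cohomology avoids this difficulty.
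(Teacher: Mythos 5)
The paper offers no internal proof of Theorem \ref{hainpic0theorem} --- it is imported from Hain \cite{hain2000} --- so the comparison is with Hain's argument, whose non-equivariant core is the paper's Proposition \ref{hainpic0proposition} (from \cite{hain94}). Measured against that, your central reduction has a genuine gap. The minor issue is that $Y_\Gamma = E\Gamma\times_\Gamma Y$ is not a complex space, so $\cO$ and $\cO^*$ are undefined on it; this is repairable for finite $\Gamma$ by defining equivariant sheaf cohomology as the derived functors of invariant sections, and you flag it. The fatal issue is that $Y$ is quasiprojective but not complete, while $\text{Pic}(X,G)$ consists of \emph{algebraic} line bundles: the exponential sequence computes the \emph{analytic} Picard group, and GAGA fails on open varieties, so exactness of $H^1(Y,\cO)\rightarrow \text{Pic}\rightarrow H^2(Y,\ZZ)$ gives no control over the algebraic $c_1$. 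Concretely, take $Y = E\setminus\{p\}$ with $E$ an elliptic curve: $Y$ is Stein with $H^1(Y^{an},\cO) = 0$ and $H^2(Y,\ZZ)=0$, so your sequence makes the analytic Picard group vanish, yet the algebraic kernel of $c_1$ is $\text{Pic}\ Y \cong E(\CC)$, which is enormous --- the algebraic $\text{Pic}^0$ is simply not a quotient of $H^1(Y^{an},\cO)$. For the same reason your Hodge-theoretic input is misstated: for non-complete $Y$, $H^1(Y^{an},\cO)$ is \emph{not} a subquotient of the mixed Hodge structure on $H^1(Y,\CC)$ (every smooth affine variety has $H^1(Y^{an},\cO)=0$ regardless of its topology); the identification of $H^1(\cO)$ with a Hodge graded piece is a projective fact, and in the open case the Hodge filtration is computed on a compactification $\overline{Y}$ via forms with log poles, the relevant coherent group being $H^1(\overline{Y},\cO_{\overline{Y}})$.

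The established route, which your strategy correctly anticipates in outline, is the one behind Proposition \ref{hainpic0proposition}: choose a ($\Gamma$-equivariant) smooth compactification $\overline{Y}$ with normal crossings boundary $D$, use the exact sequence $\bigoplus_i \ZZ\,[D_i]\rightarrow \text{Pic}\ \overline{Y}\rightarrow \text{Pic}\ Y\rightarrow 0$, so that $\ker(c_1)$ is governed by the abelian variety $\text{Pic}^0(\overline{Y})$, whose $H^1$ is $W_1H^1(Y,\QQ)$; your correctly derived consequence $H^1(Y,\QQ)^\Gamma = 0$ (via the transfer) then kills the identity component of the $\Gamma$-invariant part. But two residual pieces remain that your outline never confronts, precisely because the invalid exponential-sequence shortcut was meant to subsume them: $\Gamma$-invariant \emph{torsion} classes in $\ker(c_1)$, and the kernel of the restriction $\text{Pic}(X,G)\rightarrow \text{Pic}\ Y$ --- equivariant structures on $\cO_Y$, classified by $H^1(\Gamma, H^0(Y,\cO^\times))$ --- whose equivariant Chern classes are torsion classes in $H^2_\Gamma(Y,\ZZ)$ that must be shown nonzero unless the class is trivial. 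It is in handling these that the full integral hypothesis $H^1_{G/G'}(G'\backslash X,\ZZ)=0$, rather than just its rational shadow, is needed. So the overall plan (transfer to $\Gamma$-invariants plus Hodge theory on the smooth cover) is sound, but as written the proof fails at its central step.
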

Additionally,  the image of $c_1: \text{Pic}(Y)\rightarrow H^2(Y,\ZZ)$ contains the entire torsion subgroup of $H^2(Y,\ZZ)$, which is isomorphic to the torsion subgroup of $H_1(Y,\ZZ)$ by the Universal Coefficients Theorem. These torsion classes are exactly the Chern classes of flat line bundles on $Y$. This result also carries over to the orbifold setting.
\begin{theorem}[Putman \cite{putmanpicard}]\label{putmanpicardtheorem}
Suppose that $(X,G)$ is a quasiprojective orbifold with regular quasiprojective finite cover $(X,G')$. The image of $c_1: \text{Pic}(X,G)\rightarrow H^2_{\Gamma}(G'\backslash X,\ZZ)$ contains the entire torsion subgroup.
\end{theorem}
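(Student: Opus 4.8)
The plan is to establish the orbifold analogue of the classical fact that the torsion classes in $H^2$ of a smooth variety are exactly the first Chern classes of flat line bundles with finite monodromy; the one genuinely new feature is keeping track of the $\Gamma$-equivariance. Write $Z = E\Gamma\times_\Gamma(G'\backslash X)$. By Proposition \ref{equivariantcohomologyprop} this is a model for $EG\times_G X$, so that $H^2_\Gamma(G'\backslash X,\ZZ)\cong H^2(Z,\ZZ)$, and since $X$ is simply connected while $G'$ acts freely, the Borel fibration $X\to Z\to BG$ yields $\pi_1(Z)\cong G$. Thus the characters we will exploit live on $G$ itself.

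First I would identify the torsion of $H^2(Z,\ZZ)$ with the image of a Bockstein. Fix a torsion class $t$ of order $n$. The coefficient sequence $0\to\ZZ\xrightarrow{\cdot n}\ZZ\to\ZZ/n\ZZ\to0$ gives a connecting map $\tilde\beta_n\colon H^1(Z,\ZZ/n\ZZ)\to H^2(Z,\ZZ)$ whose image is precisely the $n$-torsion subgroup $H^2(Z,\ZZ)[n]=\ker(\cdot n)$; hence $t=\tilde\beta_n(\chi)$ for some $\chi\in H^1(Z,\ZZ/n\ZZ)=\mathrm{Hom}(G,\ZZ/n\ZZ)$, the last identification by the Universal Coefficients Theorem and $\pi_1(Z)=G$. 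Composing with $\ZZ/n\ZZ\hookrightarrow\CC^*$ I regard $\chi$ as a character of $G$ with finite cyclic image; using $\ZZ/n\ZZ$ rather than $\QQ/\ZZ$ coefficients is exactly what guarantees that $N:=\ker\chi$ has finite index in $G$, which will be essential below.

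Next I would construct an explicit equivariant algebraic line bundle realizing $t$. Set $G'':=G'\cap N$. This is a normal, finite-index subgroup of $G$ acting freely on $X$, so $(X,G'')$ is again a regular quasiprojective finite cover; since $\mathrm{Pic}(X,G)$ and, by Proposition \ref{equivariantcohomologyprop}, the target $H^2_\Gamma(G'\backslash X,\ZZ)\cong H^2_G(X,\ZZ)\cong H^2_{G/G''}(G''\backslash X,\ZZ)$ are independent of the chosen cover, I may compute everything using $G''$ in place of $G'$. On the smooth variety $G''\backslash X$ I take the trivial line bundle $(G''\backslash X)\times\CC$ and let $G/G''$ act by $\bar g\cdot(x,v)=(\bar g\,x,\chi(g)v)$, which is well defined because $G''\subseteq\ker\chi$. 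The underlying bundle is trivial and the action is linear on fibers and algebraic on the base, so this is genuinely an algebraic $(G/G'')$-equivariant line bundle $\cLL_\chi$, hence a class in $\mathrm{Pic}(X,G)$. The point of passing to $G''$ is that it trivializes the bundle and makes algebraicity automatic, sidestepping any GAGA or descent argument for a flat bundle with finite monodromy.

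Finally I would compute $c_1(\cLL_\chi)$. By definition the equivariant Chern class is the ordinary first Chern class of $E(G/G'')\times_{G/G''}\cLL_\chi$ over $E(G/G'')\times_{G/G''}(G''\backslash X)\simeq Z$. Unwinding the homotopy quotient (again the mechanism behind Proposition \ref{equivariantcohomologyprop}), this total space is $EG\times_G(X\times\CC_\chi)$, where $\CC_\chi$ is $\CC$ with $G$ acting through $\chi$; that is, it is the flat line bundle on $Z$ whose monodromy $\pi_1(Z)=G\to\CC^*$ is $\chi$. The first Chern class of this flat bundle is $\tilde\beta_n(\chi)=t$, by the standard identification of the connecting homomorphism of the coefficient sequence with the Chern class of the associated flat bundle. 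Hence $t$ lies in the image of $c_1$, and as $t$ was arbitrary the image of $c_1$ contains the entire torsion subgroup. I expect the main obstacle to be precisely this last identification: one must carefully match the homotopy-quotient description of $\cLL_\chi$ with the flat bundle on $Z$ and invoke the exact relationship between $\tilde\beta_n$ and flat-bundle Chern classes, while keeping the bookkeeping of the change of cover from $G'$ to $G''$ consistent on both the domain and the target of $c_1$.
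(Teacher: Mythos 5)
Your proof is correct, but note that the paper does not actually prove this statement: it is imported verbatim from Putman \cite{putmanpicard}, so there is no internal argument to compare against. Your reconstruction follows the same standard route as the original: identify the torsion of $H^2_\Gamma(G'\backslash X,\ZZ)\cong H^2(EG\times_G X,\ZZ)$ as the image of the Bockstein $\tilde\beta_n$ on $H^1(Z,\ZZ/n\ZZ)=\mathrm{Hom}(G,\ZZ/n\ZZ)$ (using $\pi_1(Z)\cong G$, valid because $X$ is simply connected), pass to the deeper regular cover determined by $G''=G'\cap\ker\chi$ --- which is normal, of finite index, and acts freely, so it is again a regular quasiprojective finite cover --- and realize the class by the $G/G''$-equivariant bundle $(G''\backslash X)\times\CC_\chi$, whose algebraicity is automatic precisely because the underlying bundle is trivial and $G/G''$ acts on $G''\backslash X$ by algebraic automorphisms. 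Two caveats, both routine and both of which you essentially flag: the identification $c_1(\cLL_\chi)=\tilde\beta_n(\chi)$ holds only up to a sign depending on orientation conventions for the exponential sequence, which is harmless since torsion classes come in inverse pairs ($\chi$ may be replaced by $-\chi$); and the change of cover from $G'$ to $G''$ must be compatible with $c_1$ on both $\mathrm{Pic}(X,G)$ and $H^2_\Gamma$, which is exactly the ``readily checked'' independence of the choice of cover that the paper's definitions already presuppose. Your observation that working with $\ZZ/n\ZZ$-valued rather than $\QQ/\ZZ$-valued characters guarantees finite index of $\ker\chi$ without invoking finite generation of $G$ is a small but genuine simplification of the usual bookkeeping.
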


By the Universal Coefficients Theorem and the fact that $X$ is simply-connected, the torsion subgroup of $H^2_{\Gamma}(G'\backslash X,\ZZ)$ is isomorphic to the torsion subgroup of $H_1(G,\ZZ)$.

\subsubsection{Finite generation of Picard groups} Suppose that $X$ is a smooth quasiprojective variety. Then $\text{Pic}\ X$ is the extension of a finitely generated group by a complex torus. It follows that $\text{Pic}\ X$ is finitely generated if and only if $\text{Pic}\ X \otimes \QQ$ is finite-dimensional. The following generalization appears to be well known. However, being unable to find a reference, we give a proof here. 

\begin{lemma}
Assume that $X$ is the quotient of a smooth quasiprojective variety $Y$ by a finite group. Then $\text{Pic}\ X$ is finitely generated if and only if $\text{Pic}\ X\otimes \QQ$ is finite-dimensional.
\end{lemma}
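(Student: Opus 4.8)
The plan is to reduce the orbifold statement to the known statement for smooth quasiprojective varieties, which is recalled just before the lemma: for a smooth quasiprojective variety, the Picard group is finitely generated if and only if its rationalization is finite-dimensional. Writing $X = Y/\Gamma$ with $Y$ smooth quasiprojective and $\Gamma$ finite, the key observation is that $\emph{Pic}\ X$ can be identified with the group of $\Gamma$-equivariant line bundles on $Y$, so there is a natural forgetful homomorphism
$$\phi\colon \emph{Pic}\ X \longrightarrow \emph{Pic}\ Y$$
that forgets the equivariant structure, landing in the $\Gamma$-invariant part $(\emph{Pic}\ Y)^{\Gamma}$.

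First I would analyze the kernel and cokernel of $\phi$. The kernel consists of equivariant structures on the trivial bundle $\cO_Y$, which are parametrized by the character group $\mathrm{Hom}(\Gamma, \CC^{*})$ modulo coboundaries; since $\Gamma$ is finite, this is a finite group. For the cokernel, an ordinary $\Gamma$-invariant line bundle on $Y$ need not admit an equivariant structure, but the obstruction to lifting and the ambiguity in lifts are both controlled by the group cohomology of the finite group $\Gamma$ with coefficients in $H^0(Y,\cO_Y^{*})$ and related finite data; the point is simply that $\phi$ has finite kernel and that its image has finite index in the finitely generated group $(\emph{Pic}\ Y)^{\Gamma}$. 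Consequently $\emph{Pic}\ X$ is finitely generated if and only if $(\emph{Pic}\ Y)^{\Gamma}$ is finitely generated, and after tensoring with $\QQ$ the finite kernel and cokernel disappear, giving $\emph{Pic}\ X \otimes \QQ \cong (\emph{Pic}\ Y \otimes \QQ)^{\Gamma}$.

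Now I would invoke the smooth-variety case for $Y$: since $Y$ is a smooth quasiprojective variety, $\emph{Pic}\ Y$ is an extension of a finitely generated group by a complex torus, and it is finitely generated precisely when $\emph{Pic}\ Y\otimes\QQ$ is finite-dimensional. Taking $\Gamma$-invariants is an exact functor on $\QQ$-vector spaces with $\Gamma$-action (as $\Gamma$ is finite and we are in characteristic zero, the averaging idempotent $\frac{1}{|\Gamma|}\sum_{\gamma}\gamma$ provides a projection onto the invariants), so $(\emph{Pic}\ Y\otimes\QQ)^{\Gamma}$ is finite-dimensional exactly when $\emph{Pic}\ Y\otimes\QQ$ is. Chaining the equivalences yields: $\emph{Pic}\ X$ finitely generated $\iff$ $(\emph{Pic}\ Y)^{\Gamma}$ finitely generated $\iff$ $(\emph{Pic}\ Y\otimes\QQ)^{\Gamma}$ finite-dimensional $\iff$ $\emph{Pic}\ X\otimes\QQ$ finite-dimensional.

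I expect the main obstacle to be the bookkeeping around $\phi$, specifically verifying that the cokernel of the forgetful map into $(\emph{Pic}\ Y)^{\Gamma}$ is finite rather than merely that the image is a subgroup. The cleanest route is to use the exact sequence relating equivariant and ordinary Picard groups through the cohomology of $\Gamma$ (the equivariant Picard group sits in an exact sequence with terms $H^1(\Gamma, \CC^{*})$ and $H^2(\Gamma,\CC^{*})$, both finite because $\Gamma$ is finite), so that both the kernel and cokernel of $\phi$ are finite; this is precisely the input that makes the rational comparison an isomorphism. Once that finiteness is in hand, the remainder is a routine application of exactness of $\Gamma$-invariants over $\QQ$ together with the already-stated result for the smooth variety $Y$.
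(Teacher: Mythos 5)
Your argument breaks at the middle equivalence, and it breaks exactly where the lemma's real content lies. From exactness of $\Gamma$-invariants in characteristic zero you conclude that $(\text{Pic}\ Y\otimes\QQ)^{\Gamma}$ is finite-dimensional \emph{if and only if} $\text{Pic}\ Y\otimes\QQ$ is. The averaging idempotent only shows that the invariants are a direct summand, which gives one implication; the converse is false, and false within the scope of this very lemma. Take $Y$ an abelian surface (projective, hence quasiprojective) and $\Gamma = \ZZ/2\ZZ$ acting by $y\mapsto -y$. Then $(-1)^{*}$ acts as $-1$ on $\text{Pic}^0(Y)$, so $(\text{Pic}\ Y\otimes\QQ)^{\Gamma}\cong \mathrm{NS}(Y)\otimes\QQ$ is finite-dimensional, while $\text{Pic}\ Y\otimes\QQ$ is infinite-dimensional because $\text{Pic}^0(Y)$ is a divisible complex torus; in particular $\text{Pic}\ Y$ is \emph{not} finitely generated here (your second paragraph even calls $(\text{Pic}\ Y)^{\Gamma}$ ``finitely generated'' outright, which presumes what must be proved). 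Since the nontrivial direction of the lemma is ``$\text{Pic}\ X\otimes\QQ$ finite-dimensional $\Rightarrow$ $\text{Pic}\ X$ finitely generated,'' you cannot get there by applying the smooth-variety criterion to $Y$ itself. The step is repairable, but it needs the torus structure rather than exactness of invariants: from $0\to T\to \text{Pic}\ Y\to \mathrm{NS}(Y)\to 0$ with $T$ a complex torus, $(\text{Pic}\ Y)^{\Gamma}$ is an extension of a finitely generated group by $T^{\Gamma}$, and finite-dimensionality of $(\text{Pic}\ Y\otimes\QQ)^{\Gamma}$ forces the identity component of $T^{\Gamma}$ to vanish, so $T^{\Gamma}$ is finite and $(\text{Pic}\ Y)^{\Gamma}$ is finitely generated. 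That is the argument you actually need, and it is not the one you wrote.

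A secondary gap: the opening identification of $\text{Pic}\ X$ with the group of $\Gamma$-equivariant line bundles on $Y$ is false in general. Descent fails at fixed points where the stabilizer acts nontrivially on the fiber: for $Y=\CC$ with $\Gamma=\ZZ/2\ZZ$ acting by $x\mapsto -x$, one has $X\cong\CC$ and $\text{Pic}\ X = 0$, while the equivariant Picard group is $\mathrm{Hom}(\Gamma,\CC^{*})\cong\ZZ/2\ZZ$. What is true --- and is exactly what the paper cites from \cite{knopkraftvust} immediately \emph{after} this lemma --- is that $\text{Pic}\ X$ is a finite-index subgroup of the equivariant Picard group; combined with finiteness of $H^1(\Gamma, H^0(Y,\cO_Y^{*}))$ and $H^2(\Gamma, H^0(Y,\cO_Y^{*}))$ (note that for quasiprojective $Y$ the units can be larger than $\CC^{*}$, though Rosenlicht's units theorem keeps these cohomology groups finite), this does deliver your finite-kernel, finite-cokernel comparison with $(\text{Pic}\ Y)^{\Gamma}$, so this part is fixable with the right citation. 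For contrast, the paper's own proof avoids equivariant bundles entirely: it removes the fixed locus $Z$, observes that $X' = G\backslash(Y\setminus Z)$ is smooth quasiprojective, uses that Weil divisors on $X$ are $\QQ$-Cartier to get $\text{Pic}\ X\otimes\QQ = A^1(X)\otimes\QQ$, applies the smooth criterion to $X'$, and concludes via the surjection $A^1(X)\to A^1(X')$ with finitely generated kernel together with normality of $X$, which gives $\text{Pic}\ X\subset A^1(X)$. That route never has to confront the torus inside $\text{Pic}\ Y$, which is precisely the object your argument mishandles.
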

\begin{proof}
Assume that $X = G\backslash Y$ with $G$ finite. Without loss of generality, we shall assume that $G$ acts effectively.  Let $Z\subset Y$ denote the locus of points fixed by some element of $G$. Set $Y' = Y\setminus Z$ and define $X' = G\backslash Y'$. 

For a variety $W$, let $A^1(W)$ denote the Chow group of divisors on $W$ modulo rational equivalence. Then the kernel of the canonical map $A^1(X)\rightarrow A^1(X')$ is generated by the codimension 1 components of $Z$. Since $X$ has finite quotient singularities, Weil divisors on $X$ are $\QQ$-Cartier and we have $\text{Pic}\ X\otimes \QQ = A^1(X)\otimes \QQ$. If $\text{Pic}\ X\otimes \QQ$ is finite-dimensional, then so is $A^1(X')\otimes \QQ = \text{Pic}\ X'\otimes \QQ$. Since $X'$ is smooth and quasiprojective, $\text{Pic}\ X' =A^1(X')$ is finitely generated. Thus $A^1(X)$ is finitely generated, as it is an extension of a finitely generated abelian group by a finitely generated abelian group. Since $X$ is normal, $\text{Pic}(X)$ is a subgroup of $A^1(X)$. Thus $\text{Pic}(X)$ is finitely generated. 
\end{proof}

By \cite{knopkraftvust}, if $G$ is a finite group acting on a smooth quasiprojective variety $Y$, then $\text{Pic}\ G\backslash Y$ is a finite-index subgroup of the group $\text{Pic}_G Y$ of $G$-equivariant line bundles on $Y$. From this, we immediately deduce the following result.
 
 \vspace{.1in}
\begin{lemma}\label{picardlemma}
Let $(X,G)$ be a quasiprojective orbifold. Then $\emph{Pic}(G\backslash X)\otimes \QQ$ is finite-dimensional if and only if $\emph{Pic}(X,G)$ is finitely generated.
\end{lemma}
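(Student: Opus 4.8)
The plan is to reduce the statement to the preceding lemma by exploiting the relationship between $\text{Pic}(X,G)$ and the ordinary Picard group of the quotient space $G\backslash X$. First I would fix a regular quasiprojective finite cover $(X,G')$, so that $G'$ is normal in $G$ and acts freely; set $Y = G'\backslash X$ and $\Gamma = G/G'$. Since $G'$ acts freely, $Y$ is a smooth quasiprojective variety, and because $G'$ is normal in $G$ we may identify $G\backslash X$ with the quotient $\Gamma\backslash Y$. By the very definition of the orbifold Picard group, $\text{Pic}(X,G)$ is the group $\text{Pic}_\Gamma Y$ of $\Gamma$-equivariant line bundles on $Y$.

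With these identifications in place, I would invoke the result of \cite{knopkraftvust} recalled above, applied to the finite group $\Gamma$ acting on the smooth quasiprojective variety $Y$: it says that $\text{Pic}(G\backslash X) = \text{Pic}(\Gamma\backslash Y)$ sits as a finite-index subgroup of $\text{Pic}_\Gamma Y = \text{Pic}(X,G)$. The key elementary observation is then that, for abelian groups, a finite-index subgroup is finitely generated if and only if the ambient group is. Hence $\text{Pic}(G\backslash X)$ is finitely generated if and only if $\text{Pic}(X,G)$ is finitely generated.

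It remains only to connect finite generation of $\text{Pic}(G\backslash X)$ with finite-dimensionality of $\text{Pic}(G\backslash X)\otimes\QQ$. But this is exactly the content of the preceding lemma, applied with the smooth quasiprojective variety $Y$ and the finite group $\Gamma$, which identifies $G\backslash X$ as the quotient $\Gamma\backslash Y$: it gives that $\text{Pic}(G\backslash X)$ is finitely generated if and only if $\text{Pic}(G\backslash X)\otimes\QQ$ is finite-dimensional. Chaining the two equivalences yields the desired conclusion. The argument is almost entirely formal; the only point requiring care is checking that the definitions align, namely that $\text{Pic}(X,G)$ really is $\text{Pic}_\Gamma Y$ and that the quotient $G\backslash X$ coincides with $\Gamma\backslash Y$ as the variety to which \cite{knopkraftvust} and the preceding lemma are applied. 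I expect no serious obstacle beyond this bookkeeping.
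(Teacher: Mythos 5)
Your proposal is correct and matches the paper's argument exactly: the paper likewise deduces the lemma immediately from the Knop--Kraft--Vust theorem (identifying $\text{Pic}(G\backslash X)$ as a finite-index subgroup of $\text{Pic}(X,G)=\text{Pic}_{\Gamma}(G'\backslash X)$) combined with the preceding lemma on quotients of smooth quasiprojective varieties by finite groups. Your explicit bookkeeping of the identifications and the elementary finite-index observation for abelian groups is precisely the ``immediate'' deduction the paper leaves to the reader.
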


%
%
\section{Symmetric mapping class groups and moduli spaces of symmetric curves}\label{sectionsymmetricmappingclassgroups}
Suppose that $g\geq 2$, and that $p: S_g\rightarrow X$ is a regular covering branched over a finite set $\textbf{P}\subset X$ of $n\geq 0$ points. Let $H$ denote the group of deck transformations of $p$. Since $p$ is regular, there is natural identification $X\cong S_g/H$. Note that the restriction $S_g\setminus p^{-1}(B)\rightarrow X\setminus B$ is a regular unbranched covering.

\vspace{.1in}
A homeomorphism $f$ of $S_g$ is said to be \emph{symmetric} with respect to $p$ if $f$ maps fibers of $p$ to fibers of $p$. The \emph{symmetric mapping class group} $\text{SMod}_p(S_g)$ is the group of isotopy classes of orientation-preserving homeomorphisms of $S_g$ that are symmetric with respect to $p$. Note that we do not require that the isotopies be fiber-preserving. It is clear that $H < \text{SMod}_p(S_g)$. 

Standard arguments in Teichm\"{u}ller theory imply that $\text{Mod}_H(S_g) = \text{SMod}_p(S_g)$ as subgroups of $\text{Mod}(S_g)$ (c.f. \cite{birmanhilden}). We will use these notations interchangeably when there is no danger of confusion. 
\subsection{Liftable mapping class groups and the Birman--Hilden property}

Assume that $p: S_g\rightarrow X$ is a regular finite-sheeted branched covering. We shall say that a homeomorphism $f$ of $X$ lifts to a homeomorphism of $S_g$ if there exists a homeomorphism $\tilde{f}$ of $S_g$ such that $f\circ p = p\circ \tilde{f}$. 

\subsubsection{Liftable mapping class groups}
In what follows, we will regard $X$ as a surface with marked points given by the set $\textbf{P}$ of branch points of $p$. We define the \emph{liftable mapping class group} $\text{LMod}_p(X)$ to be the subgroup of $\text{Mod}(X)$ consisting of isotopy classes of orientation-preserving homeomorphisms of $(X, \textbf{P})$ that lift to $S_g$. 

\subsubsection{Birman--Hilden theory}
There is a natural homomorphism 
$$\Phi: \text{LMod}_p(X)\rightarrow \text{SMod}_p(S_g)/H$$ obtained by sending the class of a homeomorphism $[f]\in \text{LMod}_p(X)$ to the class of a lift $[\tilde{f}]$. A celebrated result of Birman--Hilden \cite{birmanhilden} states that, under the conditions we have imposed on $p$, the homomorphism $\Phi$ is actually an isomorphism. Coverings with this property are said to have the \emph{Birman--Hilden property}. We therefore have a short exact sequence
\begin{equation}\label{birmanhildenextension}
1\rightarrow H\rightarrow \text{SMod}_p(S_g)\rightarrow \text{LMod}_p(X)\rightarrow 1
\end{equation}
where the homomorphism $\text{SMod}_p(S_g)\rightarrow \text{LMod}_p(X)$ sends the class of a homeomorphism to the class $[\overline{g}]$ of the induced homeomorphism of $X$.

\subsubsection{Finiteness properties of symmetric mapping class groups}
It follows from covering space theory that the liftable mapping class group $\text{LMod}_p(X)$ is a finite-index subgroup of $\text{Mod}(X)$. Since $\text{Mod}(X)$ is finitely presented, so too is $\text{LMod}_p(X)$. 

The fact that the deck group $H$ is finite, along with (\ref{birmanhildenextension}), implies that $\text{SMod}_p(S_g)$ is also finitely presented. Thus, for all $g\geq 2$ and all finite covers $p:S_g\rightarrow S_h$, the symmetric mapping class group $\text{SMod}_p(S_g)$ is finitely presented. This implies that the integral homology and cohomology of $\text{SMod}_p(S_g)$ are finitely generated in degrees 1 and 2.
\subsection{The Teichm\"{u}ller description of the moduli space}
In this section we review some basic Teichm\"{u}ller theory, which we shall use to describe the orbifold structure on $\cM_g^{H}$.

\subsubsection{Moduli spaces of pointed curves}\ 
Assume that $2g-2+n > 0$. Let $S_{g,n}$ denote a closed reference surface of genus $g$ with $n$ marked points. Recall that the Teichm\"{u}ller space $\mathfrak{X}_{g,n}$ of $S_{g,n}$ is the space of all complex structures on $S_{g,n}$ up to isotopy. Points of $\mathfrak{X}_{g,n}$ are equivalence classes of pairs $((C,\textbf{P}),\varphi)$ where $C$ is a smooth complete algebraic curve of genus $g$, $\textbf{P}$ is a set of $n$ marked points $\varphi: (C,\textbf{P})\rightarrow S_{g,n}$ is a homeomorphism. The mapping class group $\text{Mod}(S_{g,n})$ acts via biholomorphisms on $\mathfrak{X}_{g,n}$.

\vspace{.1in}
The moduli space of smooth genus $g$ curves with $n$ \emph{ordered} marked points $\cM_{g,n}$ can be realized as the orbifold $\left(\text{PMod}(S_{g,n}), \mathfrak{X}_{g,n}\right)$.  It is a quasiprojective orbifold because $\text{PMod}(S_{g,n})[m]$ is torsion-free if $m\geq 3$, and the quasiprojective orbifold 
$$\cM_{g,n}[m] = (\text{PMod}(S_{g,n})[m], \mathfrak{X}_{g,n})$$ is a regular quasiprojective finite cover. 

\vspace{.1in}
The moduli space $\cM_{g,[n]}$ of smooth curves of genus $g$ with $n$ \emph{unordered} points is the orbifold $\left(\text{Mod}(S_{g,n}), \mathfrak{X}_{g,n}\right)$. It is a quasiprojective orbifold, and there is a Galois covering $\cM_{g,n}\rightarrow \cM_{g,[n]}$ with Galois group $\mathfrak{S}_n$, which acts by permuting the marked points.

\subsubsection{Moduli of symmetric curves} Let $H$ be a finite subgroup of $\text{Mod}(S_g)$. 
It follows from Kerckhoff's solution of the Nielsen realization problem \cite{kerckhoff} that the action of $H$ on $\mathfrak{X}_g$ has a nonempty set of fixed points \cite{gonzalezdiezharvey}. It can be shown that the locus $\mathfrak{X}_g^{H}$ of points that are fixed by the $H$-action form a contractible complex submanifold of $\mathfrak{X}_g$ of dimension $3g'-g'+n$, where $g'$ is the genus of the quotient $X = S_g/H$ and $n$ is the number of branch points of the covering $S_g\rightarrow X$. In fact, $\mathfrak{X}_g^{H}$ is itself isomorphic to a Teichm\"{u}ller space. It is known \cite{gonzalezdiezharvey} that the stabilizer of $\mathfrak{X}_g^{H}$ in $\text{Mod}(S_g)$ is equal to the normalizer $\text{Mod}_{H}(S_g)$ of $H$ in $\text{Mod}(S_g)$. 
\begin{theorem}[Gonz\'{a}lez-D\'{i}ez--Harvey \cite{gonzalezdiezharvey}]\label{teichmullerspaces}
Assume that $g\geq 2$, and suppose that the covering $p: S_g\rightarrow S_g/H$ has exactly $n$ branch points. Then there is a biholomorphism $\phi: \mathfrak{X}_g^{H}\rightarrow \mathfrak{X}_{g',n}$ where $g'$ is the genus of $S_g/H$. 
\end{theorem}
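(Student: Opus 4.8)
The plan is to realize $\phi$ concretely as the operation ``quotient by $H$,'' with inverse ``pull back the complex structure along the branched cover $p$,'' and then to check that the resulting bijection is holomorphic. First I would unwind what a point of $\mathfrak{X}_g^H$ is. A point $(C,\varphi)\in\mathfrak{X}_g$, with marking $\varphi\colon C\to S_g$, is fixed by the class of $h\in H$ precisely when there is a biholomorphism $\alpha_h\colon C\to C$ with $\varphi\circ\alpha_h$ isotopic to $h\circ\varphi$. Because $g\geq 2$, a mapping class of $C$ is realized by at most one biholomorphism (Hurwitz), so $h\mapsto\alpha_h$ is a well-defined injective homomorphism $H\to\mathrm{Aut}(C)$; thus a point of $\mathfrak{X}_g^H$ is the same datum as a Riemann surface $C$ homeomorphic to $S_g$ equipped with a holomorphic $H$-action realizing the prescribed topological action via $\varphi$. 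I would then form the quotient $\bar C=C/H$, a compact Riemann surface of genus $g'$ whose branch locus is a set $\textbf{Q}$ of $n$ points. Since $\varphi$ is $H$-equivariant up to isotopy, it descends to a homeomorphism $\bar\varphi\colon(\bar C,\textbf{Q})\to(X,\textbf{P})=(S_g/H,\textbf{P})$ carrying branch points to branch points; composing with a fixed reference homeomorphism $\psi\colon(X,\textbf{P})\to S_{g',n}$ gives the point $\phi(C,\varphi)=(\bar C,\textbf{Q},\psi\circ\bar\varphi)\in\mathfrak{X}_{g',n}$. A short check shows this is independent of the representatives chosen.

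Next I would construct the inverse. Given $(\bar C,\textbf{Q},\beta)\in\mathfrak{X}_{g',n}$ with marking $\beta\colon\bar C\to S_{g',n}$, I would transport the complex structure of $\bar C$ to $(X,\textbf{P})$ through the homeomorphism $\beta^{-1}\circ\psi\colon X\to\bar C$, so that the marking is respected. Restricting to $X\setminus\textbf{P}$ and pulling back along the \emph{unbranched} regular cover $S_g\setminus p^{-1}(\textbf{P})\to X\setminus\textbf{P}$ produces an $H$-invariant complex structure there; near each branch point the cover is modeled on $z\mapsto z^k$, so the structure extends uniquely across $p^{-1}(\textbf{P})$ and makes $p$ holomorphic, hence makes $H$ act by biholomorphisms. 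Equipping $S_g$ with this structure and the induced marking yields a point of $\mathfrak{X}_g^H$, and unwinding the definitions shows this assignment is a two-sided inverse to $\phi$, so $\phi$ is a bijection.

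Finally I would establish holomorphicity, for which the most transparent route is deformation theory. The cotangent space to $\mathfrak{X}_g$ at $C$ is the space $Q(C)$ of holomorphic quadratic differentials, and the cotangent space to the submanifold $\mathfrak{X}_g^H$ is the invariant subspace $Q(C)^H$. Pushing forward along $C\to\bar C$ identifies $Q(C)^H$ with the space of quadratic differentials on $\bar C$ that are holomorphic away from $\textbf{Q}$ and have at worst simple poles at the points of $\textbf{Q}$ --- exactly the cotangent space to $\mathfrak{X}_{g',n}$ at $\phi(C,\varphi)$ --- and this identification is the coderivative of $\phi$. Equivalently one checks directly that $\phi$ pulls back Beltrami differentials on $\bar C$ to $H$-invariant Beltrami differentials on $C$ in a holomorphic fashion. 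Since $\phi$ is then a holomorphic bijection between complex manifolds, both of dimension $3g'-3+n$, its inverse is automatically holomorphic and $\phi$ is a biholomorphism.

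I expect the main obstacle to be the analytic bookkeeping at the branch points: verifying that the pulled-back complex structure extends holomorphically across the ramification locus, and that under this extension the $H$-invariant quadratic differentials on $C$ correspond precisely to quadratic differentials on $\bar C$ with the prescribed simple poles at $\textbf{Q}$, the poles accounting for the extra $n$ moduli. Establishing this correspondence cleanly is what forces the derivative of $\phi$ to be an isomorphism and pins down the dimension count $\dim\mathfrak{X}_g^H=3g'-3+n$.
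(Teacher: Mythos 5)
The paper does not actually prove this statement: it is imported wholesale from Gonz\'{a}lez-D\'{i}ez--Harvey, and the only thing the paper adds is the description of $\phi$ as $[C,\varphi]\mapsto[\overline{C},\overline{\varphi}]$. Your construction of $\phi$ coincides exactly with that description, and the rest of your sketch is the standard Teichm\"{u}ller-theoretic proof of the cited result, so there is no internal proof to compare against; judged on its own terms your argument is essentially correct. In particular your identification of points of $\mathfrak{X}_g^H$ with marked surfaces carrying a holomorphic $H$-action is right (injectivity of $\mathrm{Aut}(C)\to \mathrm{Mod}(C)$ for $g\geq 2$ gives both uniqueness of $\alpha_h$ and the homomorphism property), and the branch-point bookkeeping you flag as the main obstacle does work out: in the local model $z=w^k$ one computes that $z^{-1}\,dz^2$ pulls back to $k^2w^{k-2}\,dw^2$, so $H$-invariant holomorphic quadratic differentials on $C$ correspond precisely to quadratic differentials on $\overline{C}$ with at worst simple poles along the branch locus, which is the cotangent space to $\mathfrak{X}_{g',n}$ and yields the dimension $3g'-3+n$ (the paper's ``$3g'-g'+n$'' is evidently a typo for this).

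The one place where you genuinely understate the work is the sentence ``since $\varphi$ is $H$-equivariant up to isotopy, it descends.'' A homeomorphism that is equivariant only up to isotopy does not descend to the quotient; you must first replace $\varphi$ by a representative that is equivariant on the nose, i.e.\ satisfies $\varphi\circ\alpha_h=h\circ\varphi$ for all $h\in H$ simultaneously. This is not a ``short check'': it amounts to the statement that the two finite subgroups $\varphi G\varphi^{-1}$ and $H$ of $\mathrm{Homeo}^+(S_g)$, which have the same image in $\mathrm{Mod}(S_g)$, are conjugate by a homeomorphism isotopic to the identity --- the uniqueness counterpart of Nielsen realization, which for $g\geq 2$ can be extracted from the nonemptiness and connectedness of the fixed locus $\mathfrak{X}_g^H$ but does require an argument. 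Once that point is patched, your bijection together with the Beltrami-differential (or quadratic-differential coderivative) argument for holomorphy, plus invariance of domain for the inverse, is exactly how the theorem is proved in the source.
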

The map $\phi$ in Theorem \ref{teichmullerspaces} can be described as follows. Let $[C, \varphi]\in \mathfrak{X}_g$ denote a point in Teichm\"{u}ller space, where $\varphi: C\rightarrow S_g$ is a marking. Let $G <\text{Aut}(C)$ denote the subgroup of automorphisms corresponding to the topological action of $H$ on $S_g$. Let $\overline{C} = G\backslash C$, and let $\textbf{P}$ denote the set of branch points of the ramified covering $C\rightarrow \overline{C}$. Then $\varphi$ descends to a marking $\overline{\varphi}$ of the \emph{pointed} curve $(\overline{C}, \textbf{P})$, and we have $\phi [C,\varphi] = [\overline{C}, \overline{\varphi}]$.

\vspace{.1in}
As a set, the moduli space $\cM_g^H$ of curves with a group of automorphisms acting topologically like $H$ consists equivalence classes of pairs $(C, A)$ where $C$ is a smooth curve of genus $g$ and $A < \text{Aut}(C)$ with the following property: there exists a homeomorphism $f: C\rightarrow S_g$ such that $fAf^{-1} = H$. Two such pairs $(C_1, A_1)$ and $(C_2,A_2)$ are equivalent if and only if there exists an isomorphism $\alpha: C_1\rightarrow C_2$ such that $\alpha A_1 \alpha^{-1} = A_2$. By \cite{gonzalezdiezharvey}, $\cM_g^{H}$ can be constructed analytically as the quotient space 
$$\text{Mod}_{H}(S_g)\backslash \mathfrak{X}_g^{H}.$$ 

\vspace{.1in}
Notice that the subgroup $H < \text{Mod}_{H}(S_g) $ acts trivially on $\mathfrak{X}_g^{H}$ by definition.
Since $\text{SMod}_p(S_g) = \text{Mod}_{H}(S_g)$, we have $\text{Mod}_{H}(S_g)/H = \text{LMod}_p(X, \textbf{P})$ and there is an isomorphism of analytic spaces
$$\cM_g^{H}\cong \text{LMod}_p(X,\textbf{P})\backslash \mathfrak{X}_{g',n}.$$ 
The fact that $\text{LMod}_p(X,\textbf{P})$ is a finite-index subgroup of $\text{Mod}(X, \textbf{P})$ implies, then, that the natural holomorphic map 
$$\text{LMod}_p(X,\textbf{P})\backslash \mathfrak{X}_{g',n}\rightarrow \cM_{g',[n]}.$$
is finite in the sense that it is proper, surjective, and has finite fibers. Since $\cM_{g',[n]}$ has a natural quasiprojective algebraic structure, the generalized Riemann Existence Theorem guarantees the existence of a unique (quasiprojective) algebraic structure on $\cM_g^{H}$ so that the map 
$$\cM_g^{H}\xrightarrow{\cong} \text{LMod}_p(X,\textbf{P})\backslash \mathfrak{X}_{g',n}\rightarrow \cM_{g',[n]}$$ 
is a morphism of complex algebraic varieties. Moreover, $\text{Mod}_{H}(S_g)$ admits a torsion-free subgroup of finite index, namely the intersection $\text{Mod}(S_g)[3]\cap \text{Mod}_{H}(S_g)$. 

\vspace{.1in}
We shall regard $\cM_g^{H}$ as the quasiprojective orbifold $\left(\text{Mod}_H(S_g), \mathfrak{X}^H_g\right)$ from now on.

\begin{remark}
The orbifold $(\text{SMod}_p(S_g)/H, \mathfrak{X}_g)$ is also quasiprojective and has the same underlying analytic space as $\cM_g^{H}$. However, it does not encode the fact that the objects we wish to parametrize all have a group of automorphisms acting topologically like $H$. With the orbifold structure we have placed on $\cM_g^{H}$, there is a natural orbifold mapping $\cM_g^{H}\rightarrow \cM_g$ which can be thought of as sending the (symmetric) isomorphism class of a symmetric curve to the (non-symmetric) isomorphism class of the underlying curve. 
\end{remark}
%
%
\section{Cohomology of spherical mapping class groups}\label{cohomologyofsphericalmappingclassgroupssection}
In this section, we prove some results on the structure of certain finite-index subroups of $\text{Mod}(S_{0,n})$ which will be of use to us in later sections. 

\vspace{.1in}
For $n\geq 1$, let $D_n$ denote a closed disk with $n$ marked points situated in its interior. Without loss of generality, we assume that $D_n \subset \RR^2$ and that the marked points all lie on a straight line. The braid group $B_n$ on $n$ strands is the mapping class group $\text{Mod}(D_n)$ (here we require all homeomorphisms and isotopies to fix the boundary pointwise). 

\vspace{.1in}
For each $n$, there is a homomorphism $B_n\rightarrow \text{Mod}(S_{0,n+1})$ from the braid group $B_n$ on $n$ strands obtained by capping the boundary with a once-marked disk. We define 
$$M_n = \text{Im} \left(B_n \rightarrow \text{Mod}(S_{0,n+1})\right).$$

\noindent This group has index $n+1$ in $\text{Mod}(S_{0,n+1})$.

 \vspace{.1in} We are aiming to prove the following.
\begin{proposition}\label{section4mainprop}
Assume that $g\geq 2$ and that $p: S_g\rightarrow S_0$ is a regular finite-sheeted covering of the sphere branched over a set of $n$ points. If the image of the natural homomorphism $\text{SMod}_p(S_g)\rightarrow \text{Mod}(S_{0,n})$ contains $M_{n-1}$, then for all $j\geq 1$ we have
$$H^j(\text{SMod}_p(S_g),\QQ) = 0.$$
\end{proposition}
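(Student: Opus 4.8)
The plan is to reduce, in two steps, to a statement about the rational cohomology of the group $M_{n-1}$, which can then be settled using the classical computation of the rational cohomology of the braid groups.

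First I would pass from $\text{SMod}_p(S_g)$ to the liftable mapping class group. With $X = S_0$, the image of the natural homomorphism $\text{SMod}_p(S_g)\to \text{Mod}(S_{0,n})$ is exactly $\text{LMod}_p(X)$, and the Birman--Hilden property gives the short exact sequence (\ref{birmanhildenextension})
$$1\rightarrow H\rightarrow \text{SMod}_p(S_g)\rightarrow \text{LMod}_p(X)\rightarrow 1$$
with $H$ finite. Since $H$ is finite, $H^q(H,\QQ)=0$ for $q\geq 1$, so the Hochschild--Serre spectral sequence of this extension collapses and inflation yields isomorphisms $H^j(\text{LMod}_p(X),\QQ)\xrightarrow{\cong} H^j(\text{SMod}_p(S_g),\QQ)$ for all $j$. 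Thus it suffices to prove that $H^j(\text{LMod}_p(X),\QQ)=0$ for $j\geq 1$.

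Second, the hypothesis says precisely that $M_{n-1}\subseteq \text{LMod}_p(X)$. Since $M_{n-1}$ has finite index in $\text{Mod}(S_{0,n})$, it has finite index in $\text{LMod}_p(X)$, so by the transfer the restriction $H^j(\text{LMod}_p(X),\QQ)\to H^j(M_{n-1},\QQ)$ is injective. Hence it is enough to show $H^j(M_{n-1},\QQ)=0$ for all $j\geq 1$. To do this I would analyze $M_{n-1}$ directly: it is the image of the capping homomorphism $B_{n-1}\to \text{Mod}(S_{0,n})$, whose kernel is generated by the Dehn twist about $\partial D_{n-1}$, i.e. by the full twist $\Delta^2$ that generates the infinite cyclic center of $B_{n-1}$. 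This produces a central extension
$$1\rightarrow \ZZ\rightarrow B_{n-1}\rightarrow M_{n-1}\rightarrow 1.$$
By Arnold's computation, $H^*(B_{n-1},\QQ)$ is concentrated in degrees $0$ and $1$. The Hochschild--Serre spectral sequence of the central extension has only the two rows $q=0,1$, with a single nontrivial differential $d_2$ equal to cup product with the rational Euler class $e\in H^2(M_{n-1},\QQ)$. Comparing with the vanishing of $H^{\geq 2}(B_{n-1},\QQ)$ forces $\cup e\colon H^j(M_{n-1},\QQ)\to H^{j+2}(M_{n-1},\QQ)$ to be an isomorphism for every $j\geq 1$. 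Because $M_{n-1}$ is a finite-index subgroup of $\text{Mod}(S_{0,n})$, it has finite virtual cohomological dimension, so $H^j(M_{n-1},\QQ)=0$ for $j$ large; a descending induction along the isomorphisms $\cup e$ then gives $H^j(M_{n-1},\QQ)=0$ for all $j\geq 1$, as required.

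The main obstacle I expect is this last step. The spectral-sequence bookkeeping is routine once the two inputs are in place, so the real work is (a) identifying the kernel of the capping map precisely as the infinite cyclic center $\langle\Delta^2\rangle$ (and disposing of the small-$n$ degenerate cases, where $M_{n-1}$ is finite and the conclusion is immediate), and (b) invoking both the vanishing of the higher rational cohomology of the braid group and the finiteness of the virtual cohomological dimension of $M_{n-1}$. Care must be taken that the extension genuinely has infinite cyclic kernel, since it is the resulting nontriviality of $e$ that drives the vanishing.
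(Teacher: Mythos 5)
Your proposal is correct, and its first two reductions are exactly the paper's: you collapse the Birman--Hilden extension $1\to H\to \text{SMod}_p(S_g)\to \text{LMod}_p(S_0)\to 1$ via Hochschild--Serre with finite kernel, then use the transfer to inject $H^j(\text{LMod}_p(S_0),\QQ)$ into $H^j(M_{n-1},\QQ)$, with the degenerate cases $n\leq 3$ disposed of by finiteness of $\text{Mod}(S_{0,n})$ --- this is precisely Lemma \ref{modspherelemma} and the paper's proof of Proposition \ref{section4mainprop}. Where you genuinely diverge is the endgame of the key vanishing $H^j(M_{n-1},\QQ)=0$ for $j\geq 1$ (the paper's Lemma \ref{Mnlemma}). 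Both arguments run the Gysin sequence of the central extension $1\to \langle \Delta^2\rangle \to B_{n-1}\to M_{n-1}\to 1$ against Arnol'd's computation (Theorem \ref{braidcohomology}), but the paper anchors its induction at the bottom: it shows the restriction map $H^1(B_n,\QQ)\to H^1(T,\QQ)$ is an isomorphism by computing that the boundary twist maps to $n(n-1)$ times a generator of $H_1(B_n,\ZZ)$, which kills $H^1(M_n,\QQ)$ (and $H^2$), and then climbs upward through the periodicity isomorphisms. You instead anchor at the top: the isomorphisms $\cup e\colon H^j(M_{n-1},\QQ)\to H^{j+2}(M_{n-1},\QQ)$ for $j\geq 1$ follow formally from exactness together with $H^{\geq 2}(B_{n-1},\QQ)=0$, and a descending induction from the vanishing above the finite vcd of $M_{n-1}$ finishes. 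This is valid --- finite vcd for a finite-index subgroup of $\text{Mod}(S_{0,n})$ is standard (e.g.\ $\text{PMod}(S_{0,n})$ is torsion-free with the aspherical finite-dimensional model $\cM_{0,n}$, then transfer) --- and it buys you independence from the explicit abelianization computation. Indeed, contrary to your closing remark, your own argument never uses $e\neq 0$: the periodicity isomorphisms hold whether or not $e$ vanishes, and what is genuinely indispensable is only that the kernel of the capping map is infinite cyclic (a finite kernel would force $H^1(M_{n-1},\QQ)\cong H^1(B_{n-1},\QQ)=\QQ$ and the lemma would be false). The trade-off: your route is softer and needs one extra input (finite vcd), while the paper's low-degree computation is self-contained and, as a by-product, identifies the torsion abelianization of $M_n$ rationally.
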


First, we collect a few facts about the braid groups.
\subsection{Braid groups} 
The braid group $B_n$ has a well-known presentation, the \emph{Artin presentation}, consisting of $n-1$ generators $\sigma_1,\ldots, \sigma_{n-1}$ subject to the relations
\begin{equation}
\left\{
\begin{array}{lr}
\sigma_i\sigma_{i+1}\sigma_i = \sigma_{i+1}\sigma_i\sigma_{i+1}&\ \text{for all}\ i \\
\sigma_i\sigma_j = \sigma_j\sigma_i&\ \text{for}\ |i-j| > 1 \\
\end{array}
\right.
\end{equation}
The generator $\sigma_i$ can be viewed as the half-twist along the line segment in $D_n$ joining the $i$th marked point to the $(i+1)$st marked point.

The kernel $T$ of the homomorphism $B_n\rightarrow M_n$ is the infinite cyclic group generated by isotopy class of the Dehn twist on the boundary of $D_n$. In terms of the Artin generators the class of this twist is equal to
$$(\sigma_1\cdots \sigma_{n-1})^n.$$ 
Define $T$ to be the subgroup $\langle (\sigma_1\cdots \sigma_{n-1})^n \rangle$ of $B_n$. It is clear from the geometric description of these generators that $T$ is contained in the center of $B_n$ (in fact, it is equal to the center) \cite[pp.246-47]{farbmargalit}. 

It follows from the Artin presentation of $B_n$ that the abelianization of $B_n$ is infinite cyclic and is generated by the class of any one of the generators $\sigma_j$. The abelianization map $B_n\rightarrow \ZZ$ sends the word $\sigma_{i_1}^{n_1}\cdots \sigma_{i_k}^{n_k}$  to $n_1+\cdots +n_k$.

\vspace{.1in} 
The following result will play a key role in our computations.
\begin{theorem}[Arnol'd \cite{arnold}]\label{braidcohomology}
For all $n\geq 2$, we have 
$$
H^j(B_n,\QQ) \cong \left\{
\begin{array}{lr}
\QQ & j = 0,1 \\
0 & j \geq 2
\end{array}
\right.
$$
\end{theorem}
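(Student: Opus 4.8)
The plan is to replace $B_n$ by a classifying space and compute topologically. The ordered configuration space $\mathrm{Conf}_n(\CC)$ of $n$ distinct points in $\CC$ is aspherical, being an iterated fibration of punctured planes via the Fadell--Neuwirth fibrations, so it is a $K(P_n,1)$ for the pure braid group $P_n$. Since $\mathfrak{S}_n$ permutes distinct points freely, the unordered quotient $\mathrm{UConf}_n(\CC) = \mathrm{Conf}_n(\CC)/\mathfrak{S}_n$ is again aspherical with fundamental group $B_n$, hence a $K(B_n,1)$. Therefore $H^j(B_n,\QQ) \cong H^j(\mathrm{UConf}_n(\CC),\QQ)$ for every $j$, and the problem becomes the computation of the rational cohomology of a configuration space.

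Degrees $0$ and $1$ require no work: $\mathrm{UConf}_n(\CC)$ is connected, giving $H^0 = \QQ$, and the Artin presentation recorded above shows $B_n^{\mathrm{ab}} \cong \ZZ$, so the Universal Coefficients Theorem gives $H^1(B_n,\QQ) \cong \mathrm{Hom}(B_n,\QQ) \cong \QQ$. All of the content lies in the vanishing $H^j(B_n,\QQ) = 0$ for $j \geq 2$, and here I would first apply the transfer. Because $P_n$ is a normal, finite-index subgroup of $B_n$ with quotient $\mathfrak{S}_n$, the transfer isomorphism recalled in the Preliminaries gives $H^j(B_n,\QQ) \cong H^j(P_n,\QQ)^{\mathfrak{S}_n} \cong H^j(\mathrm{Conf}_n(\CC),\QQ)^{\mathfrak{S}_n}$.

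Next I would compute $H^*(\mathrm{Conf}_n(\CC),\QQ)$ from the Fadell--Neuwirth fibration $\CC\setminus\{n-1 \text{ points}\} \to \mathrm{Conf}_n(\CC) \to \mathrm{Conf}_{n-1}(\CC)$. Rationally the monodromy on the cohomology of the fiber $\bigvee_{n-1} S^1$ is trivial, so the Leray--Serre spectral sequence degenerates; by induction one recovers Arnol'd's description of $H^*(\mathrm{Conf}_n(\CC),\QQ)$ as the algebra generated by degree-one classes $\omega_{ij}$ ($1\le i<j\le n$) modulo the Arnol'd relations, with total Poincar\'e polynomial $\prod_{k=1}^{n-1}(1+kt)$. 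The symmetric group acts by $\sigma\cdot\omega_{ij} = \omega_{\sigma(i)\sigma(j)}$.

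The crux, and the step I expect to be the main obstacle, is to show that the invariant subalgebra is concentrated in degrees $0$ and $1$. Degree one is a transitivity check: $\mathfrak{S}_n$ permutes the $\omega_{ij}$ transitively, so the invariants are spanned by $\sum_{i<j}\omega_{ij}$ and are one-dimensional, matching $H^1 = \QQ$. The essential difficulty is to prove that the trivial representation does not appear in $H^j(\mathrm{Conf}_n(\CC),\QQ)$ for $j\geq 2$, which requires genuine understanding of the $\mathfrak{S}_n$-module structure of the Arnol'd algebra rather than merely its dimensions. A cleaner geometric route that isolates exactly this content is to exploit the free action of the affine group $\CC\rtimes\CC^*$ on $\mathrm{UConf}_n(\CC)$ for $n\geq 2$: after quotienting by translations and positive scaling one is left, up to homotopy, with a principal $S^1$-bundle $\mathrm{UConf}_n(\CC)\to \overline{M}_n$ over the affine moduli space $\overline{M}_n$, whose Gysin sequence reduces the entire theorem to the single assertion that $\overline{M}_n$ is rationally acyclic. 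Whichever route one takes, the degeneration of the spectral sequences and the Gysin bookkeeping are routine, and the real weight of the theorem rests on this higher-degree vanishing.
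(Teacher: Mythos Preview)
The paper does not give a proof of this statement at all: Theorem~\ref{braidcohomology} is simply quoted from Arnol'd \cite{arnold} as input to the computations that follow (Lemma~\ref{Mnlemma} and onward). So there is nothing in the paper to compare your argument against.

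As for the proposal itself, your outline is essentially Arnol'd's original strategy, and the pieces you state are correct: the identification $H^*(B_n,\QQ)\cong H^*(\mathrm{UConf}_n(\CC),\QQ)$, the transfer isomorphism $H^*(B_n,\QQ)\cong H^*(\mathrm{Conf}_n(\CC),\QQ)^{\mathfrak S_n}$, and the presentation of $H^*(\mathrm{Conf}_n(\CC),\QQ)$ by the classes $\omega_{ij}$ with the Arnol'd relations are all standard and accurate. You are also right that the whole theorem reduces to the assertion that the trivial $\mathfrak S_n$-representation does not occur in degrees $\geq 2$, and you are candid that you have not actually proved this. Your alternative via the free $\CC\rtimes\CC^*$-action is a good reformulation, but note that it only \emph{trades} the difficulty: the rational acyclicity of the quotient $\overline{M}_n$ is exactly as hard as the invariant computation you are trying to avoid, and you have not argued it either. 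So what you have written is a correct and well-organized plan, but not a proof; the decisive step---the higher-degree vanishing of the $\mathfrak S_n$-invariants, or equivalently the acyclicity of $\overline{M}_n$---remains an assertion. If you want to close the gap, one clean route is to compute the $\mathfrak S_n$-equivariant Poincar\'e polynomial via the exponential formula (or equivalently the Lehrer--Solomon decomposition) and read off that the multiplicity of the trivial character in degree $j\geq 2$ is zero.
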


\subsection{A long exact sequence} 
We are now ready to compute. 
\begin{lemma}\label{Mnlemma}
For all $n\geq 3$ and all $j\geq 1$, we have $H^j(M_n,\QQ) = 0$.
\end{lemma}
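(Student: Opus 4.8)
The plan is to exploit the central extension
$$1 \to T \to B_n \to M_n \to 1$$
recorded above, in which $T = \langle (\sigma_1\cdots\sigma_{n-1})^n\rangle \cong \ZZ$ is the centre of $B_n$, together with Arnol'd's computation of $H^*(B_n,\QQ)$ (Theorem \ref{braidcohomology}). Since $T$ is central, $M_n$ acts trivially on $H^*(T,\QQ)$, so the Hochschild--Serre spectral sequence of this extension with $\QQ$-coefficients,
$$E_2^{p,q} = H^p(M_n, H^q(T,\QQ)) \implies H^{p+q}(B_n,\QQ),$$
is concentrated in the two rows $q = 0,1$, with $E_2^{p,0} = E_2^{p,1} = H^p(M_n,\QQ)$ (because $H^q(\ZZ,\QQ) = \QQ$ for $q=0,1$ and vanishes otherwise). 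The only possibly nonzero differential is the transgression $d_2\colon E_2^{p,1} \to E_2^{p+2,0}$, which is cup product with a class $e \in H^2(M_n,\QQ)$ (the rational Euler class of the extension); after it the sequence degenerates at $E_3 = E_\infty$.

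The crux is to show that $d_2 = 0$, equivalently $e = 0$. For this I would compute the effect of the inclusion $\iota\colon T \hookrightarrow B_n$ on first homology. The abelianization map $B_n \to \ZZ$ sends each $\sigma_i$ to $1$, hence sends the generator $(\sigma_1\cdots\sigma_{n-1})^n$ of $T$ to $n(n-1)$. Therefore $\iota_*\colon H_1(T,\QQ) \to H_1(B_n,\QQ)$ is multiplication by $n(n-1) \neq 0$, an isomorphism of one-dimensional $\QQ$-vector spaces; dually, the restriction $\iota^*\colon H^1(B_n,\QQ) \to H^1(T,\QQ)$ is an isomorphism. Since $\iota^*$ is precisely the edge homomorphism $H^1(B_n,\QQ) \twoheadrightarrow E_\infty^{0,1} \hookrightarrow E_2^{0,1} = H^1(T,\QQ)$, its surjectivity forces $E_\infty^{0,1} = E_2^{0,1}$, i.e. the generator of the bottom of the $q=1$ row survives. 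Thus $d_2$ vanishes on $E_2^{0,1}$, and by the derivation property $d_2 \equiv 0$.

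With the spectral sequence degenerate at $E_2$, the two-step filtration of the abutment yields, for every $j$,
$$\dim_\QQ H^j(B_n,\QQ) = \dim_\QQ H^j(M_n,\QQ) + \dim_\QQ H^{j-1}(M_n,\QQ).$$
By Theorem \ref{braidcohomology} the left-hand side vanishes for all $j \geq 2$, so each summand on the right vanishes for $j \geq 2$; running $j = 2,3,4,\dots$ gives $H^j(M_n,\QQ) = 0$ for all $j \geq 1$, which is the assertion. The main obstacle is precisely the vanishing of the transgression $e$; once that is secured the remainder is formal two-row (Gysin-type) bookkeeping. As a consistency check, for $n = 3$ one has $M_3 \cong B_3/Z(B_3) \cong \mathrm{PSL}_2(\ZZ) \cong \ZZ/2 \ast \ZZ/3$, which is rationally acyclic in positive degrees, in agreement with the claim.
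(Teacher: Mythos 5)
Your proof is correct and takes essentially the same route as the paper: both exploit the central extension $1\to T\to B_n\to M_n\to 1$, Arnol'd's computation of $H^*(B_n,\QQ)$, and the key observation that the generator $(\sigma_1\cdots\sigma_{n-1})^n$ of $T$ maps to $n(n-1)$ times a generator of $H_1(B_n,\ZZ)$, so that the restriction $H^1(B_n,\QQ)\to H^1(T,\QQ)$ is an isomorphism. The only difference is packaging: the paper runs the two-row spectral sequence as a Gysin-type long exact sequence (getting $H^1(M_n,\QQ)=0$ from the segment $0\to H^1(M_n,\QQ)\to H^1(B_n,\QQ)\to H^1(T,\QQ)$ and the higher degrees from the resulting periodicity maps), whereas you first show the rational Euler class vanishes, deduce full $E_2$-degeneration, and count dimensions --- the same mechanism in a slightly different order.
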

\begin{proof}
The Hochschild-Serre spectral sequence of the extension
$$1\rightarrow T  \rightarrow B_n\rightarrow M_n\rightarrow 1$$
degenerates to a long exact sequence
$$\cdots \rightarrow H^{j-1}(B_n,\QQ)\rightarrow H^{j-2}(M_n, H^1( T , \QQ))\rightarrow H^j(M_n,\QQ)\rightarrow H^j(B_n,\QQ)\rightarrow \cdots$$
Applying Theorem \ref{braidcohomology}, we see that we have isomorphisms
$$H^{j-1}(B_n,\QQ)\rightarrow H^{j-2}(M_n, H^1(T , \QQ)) \cong H^{j-2}(M_n,\QQ)$$
for all $j\geq 3$. By Theorem \ref{braidcohomology}, $H^{j}(B_n,\QQ)\cong 0$ for all $j\geq 2$. 
The long exact sequence also has a segment
\begin{equation}\label{braidsequence}
0\rightarrow H^1(M_n,\QQ)\rightarrow H^{1}(B_n,\QQ)\rightarrow H^{0}(M_n, H^1(T, \QQ))
\end{equation}
which we now analyze. The map 
\begin{equation}\label{braidequation}
H^{1}(B_n,\QQ)\rightarrow H^{0}(M_n, H^1(T, \QQ)) = H^1(T,\QQ)
\end{equation} 
is the dual of the map $H_1(T,\QQ)\rightarrow H_1(B_n,\QQ)$ induced by inclusion. The classes of the Artin generators $\sigma_j$ for $B_n$ all coincide in $H_1(B_n, \QQ)$, so this implies that the generator of $T$ of $H_1( T,\QQ)$ maps to $n(n-1)$ times a generator of $H_1(B_n,\QQ)$. Thus the map (\ref{braidequation}) is an isomorphism. By exactness of the sequence (\ref{braidsequence}), we have $H^1(M_n,\QQ) = 0$. 
\end{proof}
%
%
%
%
%
%
%
%
\begin{lemma}\label{modspherelemma}
Let $n\geq 0$ and suppose that $G$ is a subgroup of $\text{Mod}(S_{0,n})$ containing $M_{n-1}$. Then we have $H^j(G,\QQ) = 0$ for all $j\geq 1$.
\end{lemma}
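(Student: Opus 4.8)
The plan is to deduce the vanishing of $H^j(G,\QQ)$ directly from Lemma~\ref{Mnlemma} together with the transfer, exploiting the fact that $M_{n-1}$ sits as a finite-index subgroup between $G$ and $\text{Mod}(S_{0,n})$. The only genuine care needed is to respect the range $n\geq 3$ in which Lemma~\ref{Mnlemma} is available, which forces me to treat the few-punctured cases $n\leq 3$ separately.

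First I would record the key index observation. By construction $M_{n-1}$ has finite index in $\text{Mod}(S_{0,n})$, and by hypothesis $M_{n-1}\leq G\leq \text{Mod}(S_{0,n})$. Hence $[G:M_{n-1}]\leq [\text{Mod}(S_{0,n}):M_{n-1}]<\infty$, so $M_{n-1}$ is a finite-index subgroup of $G$ as well. Crucially, I do not need $M_{n-1}$ to be normal in $G$: the transfer statement recorded earlier gives injectivity on rational cohomology for an arbitrary finite-index subgroup.

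Next, for the generic range $n\geq 4$, I would apply the transfer. Since $M_{n-1}$ is a finite-index subgroup of $G$, the inclusion $M_{n-1}\hookrightarrow G$ induces an injection $H^j(G,\QQ)\hookrightarrow H^j(M_{n-1},\QQ)$ for every $j\geq 0$. Because $n-1\geq 3$, Lemma~\ref{Mnlemma} gives $H^j(M_{n-1},\QQ)=0$ for all $j\geq 1$, and injectivity then forces $H^j(G,\QQ)=0$ in the same range.

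Finally I would dispose of the small cases $n\leq 3$. Here $\text{Mod}(S_{0,n})$ is a finite group (the $n$-punctured sphere has non-negative Euler characteristic for $n\leq 3$), so the subgroup $G$ is finite and $H^j(G,\QQ)=0$ for $j\geq 1$ automatically, since finite groups have trivial rational cohomology in positive degrees. This boundary bookkeeping is the only place where one must depart from the main argument, and it is the thing I expect to be the mild ``obstacle'': Lemma~\ref{Mnlemma} is stated only for $n\geq 3$, so the cases $n\leq 3$ cannot be fed into the transfer step and must be peeled off by hand. No essential difficulty arises, since in that range every group in sight is finite.
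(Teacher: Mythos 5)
Your proposal is correct and takes essentially the same route as the paper: transfer injectivity $H^j(G,\QQ)\hookrightarrow H^j(M_{n-1},\QQ)$ for the finite-index inclusion combined with Lemma~\ref{Mnlemma} when $n\geq 4$, and finiteness of $G$ when $n\leq 3$. One minor slip in your parenthetical justification: the thrice-punctured sphere has Euler characteristic $2-3=-1<0$, so finiteness of $\text{Mod}(S_{0,3})$ does not follow from nonnegative Euler characteristic; rather, as the paper records, $\text{Mod}(S_{0,n})$ is trivial for $n=0,1$, isomorphic to $\ZZ/2\ZZ$ for $n=2$, and isomorphic to $S_3$ for $n=3$, which is what makes $G$ finite in those cases.
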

\begin{proof}
The hypotheses imply that $M_n$ is a finite-index subgroup of $G$. Thus the natural maps
$$H^j(G,\QQ)\rightarrow H^j(M_{n-1},\QQ)$$
are injective. Lemma \ref{Mnlemma} then implies that $H^j(G,\QQ)\cong 0$  for all $j\geq 1$ provided  $n\geq 4$. It is known that $\text{Mod}(S_{0,n})$ is trivial when $n=0,1$, is isomorphic to $\ZZ/2\ZZ$ when $n=2$, and is isomorphic to the symmetric group $S_3$ when $n=3$ \cite[p.50]{farbmargalit}. Thus $G$ is finite in these cases. This immediately implies that $H^j(G,\QQ) = 0$ when $0\leq n\leq 3$.
\end{proof}

We are ready to prove the main result of this section.
\begin{proof}[Proof of Proposition \ref{section4mainprop}]
Let $H$ denote the deck group of $p$. Since $H$ is finite, the Hochschild-Serre spectral sequence of the extension 
$$1\rightarrow H\rightarrow \text{SMod}_p(S_g)\rightarrow \text{LMod}_p(S_{0,n})\rightarrow 1.$$
shows that
$$H^j(\text{SMod}_p(S_g),\QQ) \cong H^j(\text{LMod}_p(S_{0,n}),\QQ)$$
for all $j\geq 0$. Since we assumed $M_{n-1}\subset \text{LMod}_p(S_{0,n})$, Lemma \ref{modspherelemma} implies at once that $H^j(\text{SMod}_p(S_g),\QQ)\cong 0$ for all $j\geq 1$.
\end{proof}
\subsection{Cohomology with non-trivial coefficients}
In general, rather little seems to be known about the cohomology of the symmetric mapping class groups, especially with non-trivial coefficients. It appears to be an interesting and difficult problem to compute the cohomology of $\text{Mod}_H(S_g)$ with coefficients coming from an arbitrary (rational) representation of $\Sp_g(\QQ)$.

As a first step in this direction, we compute the cohomology of $\text{Mod}_H(S_g)$ with coefficients in $H^1(S_g,\QQ)$ under the assumption that $S_g/H \cong \PP^1$. We will not need to make use of these results in this paper, but we include them here in order to give a slightly more complete picture of the cohomology of $\text{Mod}_H(S_g)$. The uninterested reader can safely ignore this section.

\subsubsection{A vanishing result} Let $V$ be any $\QQ$-vector space with a $\text{Mod}_H(S_g)$-module structure. For simplicity, write $\mathcal{Q} = \text{Mod}_H(S_g)/H$. Then we have a Hochschild-Serre spectral sequence with $E_2$-page 
$$H^j(\mathcal{Q}, H^k(H,V)) \implies H^{j+k}(\text{Mod}_H(S_g),V).$$
Since $H$ is a finite group, we have $H^j(H,V) = 0$ for all $j>0$ by Maschke's Theorem. Thus the spectral sequence degenerates at the $E_2$-page giving isomorphisms 
$$H^j(\cQ, V^H)\cong H^j(\text{Mod}_H(S_g), V).$$
Thus if $V^H = 0$ the cohomology groups $H^j(\text{Mod}_H(S_g), V)$ all vanish. Although we do not know general conditions under which this holds, we completely understand what happens when $V = H^1(S_g,\QQ)$ and $S_g/H \cong \PP^1$. These assumptions imply that 
$$V^H  = H^1(S_g,\QQ)^H \cong H^1(\PP^1,\QQ) = 0.$$
We have therefore shown the following.
 \begin{proposition}\label{twistedcohomologyproposition}
Suppose that $g\geq 2$ and that $H < \text{Mod}(S_g)$ is a finite group with $S_g/H \cong \PP^1$. Then for all $j\geq 0$ we have 
$H^j(\text{Mod}_H(S_g), H^1(S_g,\QQ)) = 0$.
\end{proposition}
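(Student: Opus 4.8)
The plan is to apply the Hochschild-Serre spectral sequence to the quotient extension $1 \to H \to \text{Mod}_H(S_g) \to \text{Mod}_H(S_g)/H \to 1$ (this is the Birman-Hilden extension (\ref{birmanhildenextension}), $H$ being normal in $\text{Mod}_H(S_g)$ by definition of the normalizer), taking full advantage of the finiteness of $H$ and the fact that we use rational coefficients. Write $V = H^1(S_g, \QQ)$, viewed as a $\text{Mod}_H(S_g)$-module via the symplectic representation, and set $\cQ = \text{Mod}_H(S_g)/H$. Feeding the extension into the spectral sequence gives
$$E_2^{j,k} = H^j(\cQ, H^k(H, V)) \implies H^{j+k}(\text{Mod}_H(S_g), V).$$

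First I would observe that $H$ is finite while $V$ is a $\QQ$-vector space, so Maschke's theorem forces $H^k(H, V) = 0$ for all $k > 0$. Hence the $E_2$-page is concentrated in the row $k = 0$, the spectral sequence degenerates, and we obtain natural isomorphisms $H^j(\cQ, V^H) \cong H^j(\text{Mod}_H(S_g), V)$ for every $j \geq 0$. This reduces the entire problem to showing that the invariant submodule $V^H = H^1(S_g, \QQ)^H$ vanishes, since in that case each $H^j(\cQ, V^H)$ is automatically zero.

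The genuine content, and the one step I would regard as the crux, is the computation of $H^1(S_g, \QQ)^H$. Here I would invoke the transfer for the finite quotient map $S_g \to S_g/H$: for a finite group acting on a closed surface, pullback identifies $H^1(S_g/H, \QQ)$ with the invariant subspace $H^1(S_g, \QQ)^H$. (Equivalently, one may argue Hodge-theoretically, using that $H^0(S_g, \Omega^1)^H \cong H^0(S_g/H, \Omega^1)$ and that the $H$-action respects the Hodge decomposition of $V \otimes \CC$.) Since by hypothesis $S_g/H \cong \PP^1$, we have $H^1(S_g/H, \QQ) = H^1(\PP^1, \QQ) = 0$, whence $V^H = 0$.

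Combining the two reductions finishes the argument: $H^j(\text{Mod}_H(S_g), H^1(S_g, \QQ)) \cong H^j(\cQ, V^H) = H^j(\cQ, 0) = 0$ for all $j \geq 0$. I do not expect any serious difficulty beyond the invariants computation; the only point that demands care is checking that the $H$-module structure on $V$ coming from the symplectic representation of $\text{Mod}_H(S_g)$ coincides with the action on $H^1(S_g, \QQ)$ induced by realizing $H$ as automorphisms of the fixed complex structure, so that the transfer identification with $H^1(S_g/H, \QQ)$ is legitimate.
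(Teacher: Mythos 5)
Your proposal is correct and follows essentially the same route as the paper: the Hochschild--Serre spectral sequence for the extension $1\rightarrow H\rightarrow \text{Mod}_H(S_g)\rightarrow \cQ\rightarrow 1$ degenerates at $E_2$ by Maschke's theorem, reducing everything to the vanishing of $V^H = H^1(S_g,\QQ)^H \cong H^1(S_g/H,\QQ) = H^1(\PP^1,\QQ) = 0$. Your extra care in justifying the identification $H^1(S_g,\QQ)^H \cong H^1(S_g/H,\QQ)$ via the transfer is a welcome elaboration of a step the paper states without comment, but it does not change the argument.
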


\subsubsection{Symplectic coefficients} For $g\geq 2$, let $\lambda_1,\ldots, \lambda_g$ denote a system of fundamental weights for the algebraic group $\Sp_g(\QQ)$. Let $n_1\geq n_2\geq \cdots \geq n_g\geq 0$ be integers and set $\lambda = \sum_{j=1}^gn_j\lambda_j$. Define $V(\lambda)$ to be the unique irreducible $\Sp_g(\QQ)$-module with highest weight $\lambda$. 

Every representation of $\Sp_g(\QQ)$ is also a representation of $\text{Mod}_H(S_g)$ by way of the composition 
$$\text{Mod}_H(S_g)\rightarrow \Sp_g(\ZZ)\hookrightarrow \Sp_g(\QQ).$$

There is a natural $\Sp_g(\ZZ)$-equivariant isomorphism $H_1(S_g,\QQ)\cong V(\lambda_1)$. Therefore by Proposition \ref{twistedcohomologyproposition}, we have $H^j(\text{Mod}_H(S_g), V(\lambda_1)) = 0$ whenever $S_g/H\cong \PP^1$. 

\vspace{.1in}
It would be interesting to compute $H^j(\text{Mod}_H(S_g), V(\lambda))$ for an arbitrary highest weight $\lambda$. It appears there are very few results in this direction, and most of what is known seems to concern the hyperelliptic mapping class groups. Since a hyperelliptic involution $\sigma$ acts on $V(\lambda_1)$ by $-\text{Id}$ the cohomology groups $H^j(\text{Mod}_{\langle \sigma\rangle}(S_g), V(\lambda)) = 0$ whenever $|\lambda|$ is odd. Besides this, few general results seem to exist beyond Tanaka's result \cite{tanaka} that $H_1(\text{Mod}_{\langle \sigma \rangle}(S_g), V(\lambda_1)^{\otimes 2}) = 0$ and Watanabe's result \cite{watanabe2016} that 
$$
H^1(\text{Mod}_{\langle \sigma\rangle}(S_2), V(\lambda)) = H^1(\text{Mod}(S_2), V(\lambda)) = 
\left\{
\begin{array}{lr}
\QQ& \lambda = 2\lambda_2 \\
0&\ \text{otherwise}
\end{array}
\right.
.$$

We remark here that, just as it can be shown that the (first) Johnson homomorphism $\tau_1: \cI(S_g)\rightarrow V(\lambda_3)$ gives rise to a non-trivial element of $H^1(\text{Mod}(S_g), V(\lambda_3))$ (see, for example, \cite{hain94}), it can be shown that the restriction of the second Johnson homomorphism (see \cite{hain2013}, \cite{morita})
$$\tau_2: \cK(S_g)\rightarrow V(2\lambda_2)$$ to the hyperelliptic Torelli group (see Section \ref{abelianizationGgsection}) gives rise to a non-trivial element of $H^1(\text{Mod}_{\langle \sigma \rangle}(S_g), V(2\lambda_2))$. Thus $H^1(\text{Mod}_{\langle \sigma \rangle}(S_g), V(2\lambda_2))\neq 0$ for $g\geq 2$.
\section{Covers of the projective line}\label{cycliccoversofp1section}
In this section, we prove Part 1 of Theorem \ref{maintheorem1}. As a corollary, we are able to compute the Picard groups of certain families of moduli spaces $\cM_g^{H}$ with the property that $S_g/H$ is homeomorphic to a sphere.

\subsection{The Chow Ring}

 We begin with some computations in the Chow ring of $\cM_g^H$. Recall that $\cM_g^H$ is the quotient of a smooth quasiprojective variety by a finite group.

\vspace{.1in}
Let $X$ be a complex algebraic variety. The Chow group $A_k(X)$ is the group of $k$-dimensional algebraic cycles on $X$ modulo rational equivalence. Let $A_*(X)_{\QQ} = A_k(X)\otimes \QQ$. It is shown in \cite{fulton} that if $G$ is a finite group acting on $X$, there is a natural isomorphism $A_k(X)_{\QQ}^{G}\cong A_k(G\backslash X)_{\QQ}$ for all $k\geq 0$.

We will make use of the following lemma. 

\begin{lemma}\label{abeliancoverlemma}
Let $p: S_g\rightarrow S_0$ be a regular finite-sheeted covering with abelian deck group $H$. Let $\textbf{P}\subset S_0$ denote the branch locus of $p$. Then we have $\text{PMod}(S_0, \textbf{P}) \vartriangleleft \text{LMod}_p(S_0, \textbf{{P}})$.
\end{lemma}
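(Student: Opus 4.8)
The plan is to observe that normality is essentially automatic once the correct containment is in place, so the genuine content of the lemma is the assertion that every pure mapping class lifts. First I would record the elementary principle that will handle normality: the pure mapping class group $\text{PMod}(S_0,\mathbf{P})$ is the kernel of the forgetful map $\text{Mod}(S_0,\mathbf{P})\rightarrow \mathfrak{S}_n$ (where $n=|\mathbf{P}|$), and is therefore a normal subgroup of $\text{Mod}(S_0,\mathbf{P})$. Since $\text{LMod}_p(S_0,\mathbf{P})$ is by definition a subgroup of $\text{Mod}(S_0,\mathbf{P})$, once we know that $\text{PMod}(S_0,\mathbf{P})\subseteq \text{LMod}_p(S_0,\mathbf{P})$, normality of $\text{PMod}(S_0,\mathbf{P})$ inside $\text{LMod}_p(S_0,\mathbf{P})$ is immediate from the fact that a normal subgroup of $G$ is normal in every intermediate subgroup $A\leq B\leq G$. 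Thus the whole problem reduces to establishing the inclusion $\text{PMod}(S_0,\mathbf{P})\subseteq \text{LMod}_p(S_0,\mathbf{P})$, i.e. that each pure mapping class lifts to $S_g$.

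To prove the inclusion I would pass to the unbranched covering. A self-homeomorphism of $(S_0,\mathbf{P})$ lifts to $S_g$ exactly when its restriction to $S_0\setminus \mathbf{P}$ lifts to the regular unbranched cover $S_g\setminus p^{-1}(\mathbf{P})\rightarrow S_0\setminus \mathbf{P}$, since such a lift automatically extends across the punctures. This unbranched cover is classified by a surjection $\rho\colon \pi_1(S_0\setminus \mathbf{P})\rightarrow H$, and the standard covering-space lifting criterion says that a homeomorphism $f$ lifts if and only if $f_*$ preserves the normal subgroup $\ker\rho$ (which is well defined as a subgroup independent of basepoint choices precisely because the cover is regular). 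The point at which the hypotheses enter is that, because $H$ is abelian, $\rho$ factors through the abelianization as $\pi_1(S_0\setminus \mathbf{P})\twoheadrightarrow H_1(S_0\setminus \mathbf{P},\ZZ)\xrightarrow{\bar\rho} H$; consequently it suffices to verify that the induced action of $f$ on $H_1(S_0\setminus \mathbf{P},\ZZ)$ preserves $\ker\bar\rho$.

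Finally I would exploit that the base is a sphere. The group $H_1(S_0\setminus \mathbf{P},\ZZ)\cong \ZZ^{n-1}$ is generated by the peripheral classes $\gamma_1,\ldots,\gamma_n$ of small positively oriented loops about the marked points, subject to $\sum_i \gamma_i = 0$. A pure, orientation-preserving mapping class $f$ fixes each marked point, so it carries $\gamma_i$ to a simple loop encircling the same point once in the same direction; this image is freely homotopic to $\gamma_i$, whence $f_*\gamma_i=\gamma_i$ in homology. Therefore $f$ acts trivially on $H_1(S_0\setminus \mathbf{P},\ZZ)$, so it certainly preserves $\ker\bar\rho$ and hence $\ker\rho$, and thus lifts. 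This yields $\text{PMod}(S_0,\mathbf{P})\subseteq \text{LMod}_p(S_0,\mathbf{P})$ and, combined with the first paragraph, completes the proof.

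I would flag that the one real obstacle, and the only place both hypotheses are genuinely used, is the triviality of the pure-mapping-class action on the homology of the punctured surface. This is special to genus zero: for a higher-genus quotient the pure mapping class group acts nontrivially on $H_1$ (Dehn twists on nonseparating curves already do), so the argument would break down there. The abelianness of $H$ is what permits the reduction of the liftability question from $\pi_1$ down to $H_1$ in the first place. The remaining ingredients, namely the lifting criterion, the equivalence between lifting over the branched and unbranched covers, and the intermediate-subgroup normality principle, are all standard.
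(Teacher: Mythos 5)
Your proof is correct. The paper in fact states this lemma without proof, but your argument is precisely the technique the paper uses for the analogous Lemma~\ref{dehntwistslift}: abelianness of $H$ reduces liftability over the punctured cover to preserving the kernel of $H_1(S_0\setminus \textbf{P},\ZZ)\rightarrow H$, and your genus-zero observation that an orientation-preserving pure class fixes each peripheral class $\gamma_i$, hence acts trivially on $H_1(S_0\setminus \textbf{P},\ZZ)$, supplies the needed invariance, while the reduction of normality to the intermediate-subgroup principle via the kernel of $\text{Mod}(S_0,\textbf{P})\rightarrow \mathfrak{S}_n$ is sound.
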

\begin{proof}
Recall that $\text{PMod}(S_0, \textbf{P})$ is generated by Dehn twists about simple closed curves surrounding pairs of points in $\textbf{P}$. Since these Dehn twists fix $\textbf{P}$ pointwise, Lemma 2.1 in \cite{ghaswalawinarski} directly implies that they all lift to $S_g$. We conclude that $\text{PMod}(S_0, \textbf{P})$ is a (normal) subgroup of $\text{LMod}_p(S_0, \textbf{P})$.
\end{proof}
The following result is then immediately deduced from Lemma \ref{abeliancoverlemma} and the Teichm\"{u}ller description of the moduli spaces.
\begin{proposition}
Let $p: S_g\rightarrow S_0$ be a finite-sheeted regular covering with abelian deck group $H$. Let $n$ denote the number of branch points of $p$. There is a finite Galois covering $\cM_{0,n}\rightarrow \cM_g^H$. As a consequence, we have $A_*(\cM_g^H)_{\QQ} \cong \QQ$, with a single generator in codimension 0.
\end{proposition}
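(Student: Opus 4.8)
The plan is to produce the covering directly from the Teichm\"{u}ller description and then push a Chow-group computation through it. Since $S_g/H \cong S_0$, the isomorphism following Theorem \ref{teichmullerspaces} identifies $\cM_g^H$ with $\text{LMod}_p(S_0,\textbf{P})\backslash \mathfrak{X}_{0,n}$, where the $n$ marked points are the branch points $\textbf{P}$, while by definition $\cM_{0,n} = \text{PMod}(S_{0,n})\backslash \mathfrak{X}_{0,n}$. First I would invoke Lemma \ref{abeliancoverlemma}, which gives $\text{PMod}(S_0,\textbf{P}) \vartriangleleft \text{LMod}_p(S_0,\textbf{P})$, so that $\Gamma := \text{LMod}_p(S_0,\textbf{P})/\text{PMod}(S_{0,n})$ is a subgroup of $\mathfrak{S}_n = \text{Mod}(S_{0,n})/\text{PMod}(S_{0,n})$ and hence finite. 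The induced holomorphic (indeed algebraic) map $\cM_{0,n}\rightarrow \Gamma\backslash \cM_{0,n} = \cM_g^H$ is then the asserted finite Galois covering with deck group $\Gamma$.

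For the Chow-group consequence, the idea is to transfer across this cover using the isomorphism $A_k(Y)^{\Gamma}_{\QQ}\cong A_k(\Gamma\backslash Y)_{\QQ}$ of \cite{fulton}, taking $Y = \cM_{0,n}$ (a genuine smooth quasiprojective variety for $n\geq 3$, since a generic configuration of $n$ points on $\PP^1$ has trivial stabilizer in $\text{PGL}_2$). This reduces the statement to the claim $A_*(\cM_{0,n})_{\QQ}\cong \QQ$, concentrated in codimension $0$: because $\Gamma$ acts by automorphisms of $\cM_{0,n}$ it fixes the fundamental class, so the $\Gamma$-invariants are again $\QQ$, yielding $A_*(\cM_g^H)_{\QQ}\cong \QQ$ with its single generator in codimension $0$.

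It therefore remains to compute $A_*(\cM_{0,n})_{\QQ}$, which I would establish by induction on $n$ via the forgetful map $\pi:\cM_{0,n+1}\rightarrow \cM_{0,n}$. The base case $\cM_{0,3} = \text{pt}$ is immediate. For the inductive step I would fix three marked points at $0,1,\infty$ to trivialize the universal curve, realizing $\cM_{0,n+1}$ as the complement of $n$ disjoint sections inside $\cM_{0,n}\times \PP^1$. The projective bundle formula expresses $A_*(\cM_{0,n}\times \PP^1)_{\QQ}$ in terms of $A_*(\cM_{0,n})_{\QQ}$, and the localization sequence for the union of sections then needs to be analyzed: each section is rationally equivalent to the fiber class $[\cM_{0,n}\times \mathrm{pt}]$, so the pushforward $\bigoplus_{i=1}^n A_*(\text{section}_i)_{\QQ}\rightarrow A_*(\cM_{0,n}\times \PP^1)_{\QQ}$ surjects onto that class, which kills everything below the top dimension and leaves only the fundamental class of $\cM_{0,n+1}$.

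The main obstacle is this last inductive computation, and within it the verification that all $n$ section classes coincide with the single fiber class, so that the localization sequence collapses $A_*(\cM_{0,n+1})_{\QQ}$ down to $\QQ$; the rest is formal bookkeeping with the Teichm\"{u}ller description and the transfer isomorphism of \cite{fulton}. If one prefers, the vanishing $A_*(\cM_{0,n})_{\QQ}\cong \QQ$ may instead be cited from the known structure of $\cM_{0,n}$ as an iterated complement of affine spaces, in which case the argument shortens to the first two paragraphs.
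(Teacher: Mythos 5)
Your proposal is correct, and its first two paragraphs coincide with the paper's argument: the finite Galois cover $\cM_{0,n}\rightarrow \cM_g^H$ is deduced from Lemma \ref{abeliancoverlemma} together with the Teichm\"{u}ller description, with deck group $\Gamma = \text{LMod}_p(S_0,\textbf{P})/\text{PMod}(S_0,\textbf{P})$, finite because it embeds in $\mathfrak{S}_n$, and the passage to $\cM_g^H$ is Fulton's invariants isomorphism $A_k(Y)^{\Gamma}_{\QQ}\cong A_k(\Gamma\backslash Y)_{\QQ}$. Where you genuinely diverge is the key input $A_*(\cM_{0,n})_{\QQ}\cong \QQ$: the paper disposes of this in one sentence by observing that $\cM_{0,n}$ is the complement of a hyperplane arrangement in $\CC^{n-3}$, so the localization sequence applied to $A_*(\CC^{n-3})$ forces $A_k(\cM_{0,n})$ to vanish except for $A_{n-3}\cong \ZZ$ --- exactly the shortcut you flag in your closing remark. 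Your inductive alternative via the universal curve is sound: realizing $\cM_{0,n+1}$ as the complement of $n$ disjoint sections in $\cM_{0,n}\times \PP^1$, the projective bundle formula plus the inductive hypothesis leave only the fundamental class and the fiber class, and each section class equals the fiber class in $A_{n-3}(\cM_{0,n}\times \PP^1)_{\QQ}$ precisely because its ``horizontal'' component is pulled back from $A_{n-4}(\cM_{0,n})_{\QQ}$, which vanishes by induction; it would be worth making this last point explicit, since integrally the section classes agree with the fiber class only up to a class pulled back from the base, and it is the inductive rational vanishing that kills it. What your induction buys is self-containedness (no appeal to the hyperplane-arrangement model of $\cM_{0,n}$) and an argument that generalizes to other fibered situations; what the paper's route buys is brevity, since the entire Chow computation reduces to a standard fact about complements of affine hyperplane arrangements. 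Both routes conclude identically via the transfer to the quotient.
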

\begin{proof}
Since $\cM_{0,n}$ is isomorphic to a complement of hyperplanes in $\CC^{n-3}$, we have $A_k(\cM_{0,n})$ vanishes except in degree $n-3$, where it is isomorphic to $\ZZ$. But $A_*(\cM_g^H)_{\QQ} \cong A_*(\cM_{0,n})^{G}_{\QQ}$ where $G = \text{LMod}_p(S_g)/\text{PMod}(S_0, \textbf{P})$. Thus $A_*(\cM_g^H)_{\QQ} \cong \QQ$.
\end{proof}


We are now ready to prove the first part of Theorem \ref{maintheorem1}.
\begin{proof}[Proof of Part \ref{part1} of Theorem \ref{maintheorem1}]
It follows from \cite{knopkraftvust} that $\text{Pic}\ \cM_g^H$ contains the Picard group of the variety $M_g^H$ underlying $\cM_g^H$ as a finite-index subgroup. Thus $\text{Pic}\ \cM_g^H$ is finite if and only if $\text{Pic}\ M_g^H$ is. But this follows at once from the fact that 
$$\text{Pic}\ M_g^H \otimes \QQ \cong A_{n-4}(M_g^H)_{\QQ} = 0.$$ If $H_1(\text{SMod}_p(S_g), \ZZ)$ is finite, then the first Chern class $\text{Pic}\ \cM_g^H\rightarrow H^2(\text{SMod}_p(S_g),\ZZ)$ is injective, and so maps isomorphically onto the torsion subgroup $H_1(\text{SMod}_p(S_g), \ZZ)$.
\end{proof}
\subsection{The lifting criterion of Ghaswala-Winarski}
As an application of Theorem \ref{maintheorem1}, we are able to compute $\text{Pic}\ \cM_g^H$ in certain cases when $S_g/H\cong \PP^1$. For technical reasons, we will have to impose certain restrictions on the branching behavior of the covering map $p: S_g\rightarrow S_g/H$. 

Let $p: S_g\rightarrow S_0$ be a cyclic covering of degree $d$ that is branched over a set of $n$ points. In \cite{ghaswalawinarski} Ghaswala-Winarski identified the precise conditions on $p$ under which the natural homomorphism
$$\text{SMod}_p(S_g)\rightarrow \text{Mod}(S_{0,n})$$
is surjective. We shall refer to such cyclic coverings as \emph{numerically admissible coverings}, as they are characterized by certain numerical restrictions on the monodromy of $p$. 

\vspace{.1in}
In \cite{ghaswalawinarski}, Ghaswala-Winarski showed how to construct algebraic models of numerically admissible coverings. Consider an irreducible plane curve of the form
\begin{equation}\label{cycliccurve}
y^d = (x-a_1)^{n_1}\cdots (x-a_k)^{n_k}.
\end{equation}
where $d\geq 2$, the $a_j$ are pairwise distinct, and each $n_j$ satisfies $1\leq n_j < d$. The curve (\ref{cycliccurve}) can be completed by adding finitely many points. The result is an irreducible complete algebraic curve that is, in general, singular. Normalizing (\ref{cycliccurve}) gives a smooth algebraic curve $C$. Furthermore, projecting (\ref{cycliccurve}) onto the $x$-axis induces a degree $d$ morphism $C\rightarrow \PP^1$ which is obviously a cyclic covering of degree $d$.

Ghaswala-Winarski provided the exact conditions on the tuple $(n_1,\ldots, n_k)$ under which this procedure gives a numerically admissible covering. 
\begin{theorem}[Ghaswala-Winarski \cite{ghaswalawinarski}]
The cyclic covering $C\rightarrow \PP^1$ constructed from (\ref{cycliccurve}) is numerically admissible if and only if one of the following is true:\begin{enumerate}
\item $n_1=\cdots = n_k$ and $k = 0\ \text{or}\ -1$\ \emph{mod} $d$
\item $d\geq 3$ and $k = 1$
\item $d\geq 3$, $k =2$ and $n_1 = -n_2$\ \emph{mod} $d$
\end{enumerate}
\end{theorem}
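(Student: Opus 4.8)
The plan is to convert the surjectivity of $\text{SMod}_p(S_g)\to\text{Mod}(S_{0,n})$ into a combinatorial condition on the monodromy of $p$ and then run a short case analysis. First I would invoke the Birman--Hilden sequence (\ref{birmanhildenextension}): the map in question is the composite $\text{SMod}_p(S_g)\twoheadrightarrow \text{SMod}_p(S_g)/H = \text{LMod}_p(S_{0,n})\hookrightarrow \text{Mod}(S_{0,n})$, so it is surjective if and only if $\text{LMod}_p(S_{0,n}) = \text{Mod}(S_{0,n})$. Next I would record the monodromy attached to (\ref{cycliccurve}): identifying the deck group with $\ZZ/d\ZZ$, the branch point $a_j$ has local monodromy $n_j$, while the point at infinity is a branch point exactly when $\sum_j n_j\not\equiv 0\pmod d$, in which case its monodromy is $-\sum_j n_j$. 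Irreducibility of the curve is equivalent to $\gcd(n_1,\dots,n_k,d)=1$, i.e.\ to connectedness of the cover, and the number of branch points is $n=k$ or $n=k+1$ accordingly.

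The key structural input is that the deck group is abelian. Since a homeomorphism of $(\PP^1,\textbf{P})$ acts on $\text{Hom}(\pi_1(S_{0,n}),\ZZ/d\ZZ)$ only through its induced permutation of the branch points (conjugation being invisible to an abelian group), Lemma \ref{abeliancoverlemma} yields $\text{PMod}(S_{0,n})\subseteq \text{LMod}_p(S_{0,n})$, and $\text{LMod}_p(S_{0,n}) = \text{Mod}(S_{0,n})$ if and only if the stabilizer $L\leq\mathfrak{S}_n$ of the cover is all of $\mathfrak{S}_n$. Writing $\mathbf{c}=(c_1,\dots,c_n)$ for the monodromy tuple (so $\sum_i c_i\equiv 0$ and $\gcd(c_1,\dots,c_n,d)=1$), a permutation $\pi$ lies in $L$ precisely when $\pi\cdot\mathbf{c}=u\,\mathbf{c}$ for some unit $u\in(\ZZ/d\ZZ)^\times$. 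The problem is thereby reduced to deciding when every permutation of the entries of $\mathbf{c}$ rescales $\mathbf{c}$ by a unit.

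I would then split according to $n$. When $n\geq 3$ I expect to show that $L=\mathfrak{S}_n$ forces all the $c_i$ to be equal: every even permutation must fix $\mathbf{c}$ on the nose, and since a $3$-cycle can fix a tuple only when its three cycled entries agree and the $3$-cycles generate $A_n$, the tuple $\mathbf{c}$ is constant. Combined with irreducibility (which makes the common value $c$ a unit), the infinity monodromy then agrees with the others exactly when $\sum_j n_j\equiv 0$ or $\sum_j n_j\equiv -c\pmod d$, i.e.\ when $k\equiv 0$ or $k\equiv -1\pmod d$; this is case (1). When $n=2$ we have $\text{Mod}(S_{0,2})\cong\ZZ/2\ZZ$, and its single half-twist always lifts: the two monodromies sum to zero and are units, so $u=-1$ realizes the swap. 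Tracking when $n=2$ occurs gives $k=1$ (case (2)) and $k=2$ with $n_1\equiv -n_2$ (case (3)), the hypothesis $d\geq 3$ ensuring $-1\neq 1$ so that these are genuinely distinct from case (1); for $d=2$ they collapse into it. I would close by checking the small-$d$ overlaps so that the three cases together account for exactly the admissible covers.

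The main obstacle I anticipate is the $n\geq 3$ step, specifically ruling out ``exotic'' lifts in which a transposition is realized by a nontrivial unit $u\neq 1$. Such a lift can occur only if every monodromy fixed by the transposition is annihilated by $u-1$, hence is a non-unit; the delicate point is to prove that the connectedness condition $\gcd(c_i,d)=1$, the relation $\sum_i c_i\equiv 0$, and the requirement that \emph{all} of $\mathfrak{S}_n$ (not merely one transposition) preserve the cover are jointly incompatible with any nonconstant tuple. Careful bookkeeping of the branch point at infinity and of the degenerate behaviour at $d=2$ is the remaining source of friction.
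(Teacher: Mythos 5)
The paper contains no proof of this statement to compare against: it is imported verbatim from Ghaswala--Winarski \cite{ghaswalawinarski}, and the surrounding text only applies it. Judged on its own, your reconstruction is correct, and it is essentially the strategy of the cited source: reduce surjectivity of $\text{SMod}_p(S_g)\rightarrow\text{Mod}(S_{0,n})$ to $\text{LMod}_p = \text{Mod}(S_{0,n})$ via the factorization through (\ref{birmanhildenextension}), encode the cover by its monodromy tuple $\mathbf{c}$ (including the possible branch point at infinity, with irreducibility giving $\gcd(c_1,\dots,c_n,d)=1$), and use the abelianness of the deck group to see that lifting depends only on the puncture permutation, via the condition $\pi\cdot\mathbf{c}=u\,\mathbf{c}$ for a unit $u$. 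Two remarks. First, the ``delicate point'' you flag in your closing paragraph is already dispatched by your own homomorphism observation and needs no separate argument: connectivity makes the unit $u(\pi)$ unique, so $\pi\mapsto u(\pi)$ is a homomorphism $\mathfrak{S}_n\rightarrow(\ZZ/d\ZZ)^{\times}$ into an abelian group and hence kills $A_n$; for $n\geq 3$ the $3$-cycles force $\mathbf{c}$ to be constant, and once $\mathbf{c}$ is constant a transposition can only carry $u=1$, so exotic transposition lifts are impossible and no extra incompatibility argument is required. Second, a small caveat worth making explicit: cases (2) and (3) force exactly two branch points and hence a genus-$0$ cover, where the paper's Birman--Hilden discussion (stated for $g\geq 2$) does not literally apply; but your argument only uses the elementary half --- that the image of $\text{SMod}_p(S_g)\rightarrow\text{Mod}(S_{0,n})$ equals $\text{LMod}_p$, i.e., descent of symmetric homeomorphisms together with existence of lifts --- which holds for all these covers, so nothing breaks. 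With the $n\leq 2$ bookkeeping you describe (every $a_j$ is a genuine branch point since $1\leq n_j<d$, so $n=2$ occurs only for $k=1$, or $k=2$ with $n_1\equiv -n_2$; the swap on $(c,-c)$ lifts via $u=-1$; and for $d=2$ these configurations are absorbed into case (1) because $1\equiv -1$ and $n_1\equiv -n_2$ becomes $n_1=n_2$), the sketch fills in to a complete and correct proof.
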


Since any hyperelliptic curve $C$ stems from a plane curve of the form 
$$y^2 = (x_1-a_1)\cdots (x_{2g+2}-a_{2g+2})$$
the hyperelliptic double cover $C\rightarrow \PP^1$ provides an example of a numerically admissible covering. 

\subsection{The abelianization of the symmetric mapping class group}

\subsubsection{Numerically admissible coverings}
The following result allows us to easily calculate the abelianization of $\text{Mod}_H(S_g)$ when $S_g\rightarrow S_g/H\cong \PP^1$ is a numerically admissible covering.

\begin{theorem}[Birman--Hilden \cite{birmanhilden}]\label{birmanhildentheorem}
Suppose that $H < \text{Mod}(S_g)$ is cyclic of order $d\geq 2$ and that the covering $S_g\rightarrow S_g/H$ is numerically admissible. The symmetric mapping class group $\text{Mod}_H(S_g)$ has a presentation with generators $t_1,\ldots, t_{n-1}$ and relations
\begin{equation*}
\left\{
  \begin{array}{lr}
    t_it_j = t_jt_i & |i-j| > 1\\
    t_it_{i+1}t_i = t_{i+1}t_it_{i+1} & 1\leq i \leq n-1\\
    (t_1\cdots t_{n-1}t_{n-1}\cdots t_1)^d = 1 &\ \\
   (t_1\cdots t_{n-1})^n = 1 &\ \\
    (t_1\cdots t_{n-1}, t_1) = 1 & \\
  \end{array}
\right.
\end{equation*}
\end{theorem}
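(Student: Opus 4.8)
The plan is to realize $\text{Mod}_H(S_g)$ as a \emph{central} extension of $\text{Mod}(S_{0,n})$ by $H\cong\ZZ/d\ZZ$ and then to read off the presentation by the standard recipe that produces a presentation of a central extension from one of its quotient. The starting point is the Birman--Hilden sequence (\ref{birmanhildenextension}). Because the covering is numerically admissible, the Ghaswala--Winarski criterion makes the natural map $\text{SMod}_p(S_g)\to\text{Mod}(S_{0,n})$ surjective, so $\text{LMod}_p(S_{0,n})=\text{Mod}(S_{0,n})$ and (\ref{birmanhildenextension}) becomes
$$1\to\ZZ/d\ZZ\to\text{Mod}_H(S_g)\to\text{Mod}(S_{0,n})\to1.$$
First I would verify that this extension is central: the generator of $H$ is the deck transformation of $p$, and any orientation-preserving symmetric homeomorphism permutes the sheets while preserving their cyclic ordering, hence fixes the preferred generator of $H$; thus $H$ lies in the center.

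Next I would begin from the classical presentation of $\text{Mod}(S_{0,n})$ as the quotient of the braid group $B_n=\langle\sigma_1,\dots,\sigma_{n-1}\rangle$ by the two sphere relators $\delta^n$ and $\sigma_1\cdots\sigma_{n-1}\sigma_{n-1}\cdots\sigma_1$, where $\delta=\sigma_1\cdots\sigma_{n-1}$. Writing $\bar t_i$ for the image of $\sigma_i$, I would lift each $\bar t_i$ to a canonical element $t_i\in\text{Mod}_H(S_g)$, namely the symmetric mapping class covering the half-twist $\bar t_i$. The half-twists are supported in a chain of disks meeting in the standard pattern, and their $H$-invariant preimages realize the same intersection pattern on $S_g$ (consecutive supports meeting once, non-consecutive supports disjoint). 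Since each braid relation is supported in such a local configuration, the two braid relations lift to honest identities among the $t_i$, with no central correction.

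The heart of the argument, and the step I expect to be the main obstacle, is the computation of the two \emph{central defects}: the elements of $\ZZ/d\ZZ$ by which the lifts of the sphere relators $\delta^n$ and $Z:=\sigma_1\cdots\sigma_{n-1}\sigma_{n-1}\cdots\sigma_1$ fail to be trivial in $\text{Mod}_H(S_g)$. The element $Z$ is the twist about a curve encircling all $n$ branch points; its lift is the twist about the preimage of that curve, which is precisely the deck generator, so the lift $t_1\cdots t_{n-1}t_{n-1}\cdots t_1$ generates $H$ and has order exactly $d$. Identifying the central generator $z$ of the extension with this lift then turns the kernel relation $z^d=1$ into the relation $(t_1\cdots t_{n-1}t_{n-1}\cdots t_1)^d=1$ and records the centrality of $z$ as the final commutator relation. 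For the second relator I would show that, after this normalization, the lift of $\delta^n$ is trivial, yielding $(t_1\cdots t_{n-1})^n=1$; this is an Euler-number / chain-relation computation on $S_g$, and it is exactly here that numerical admissibility is needed, both to force this defect to zero and to guarantee that $Z$ lifts to an element of order $d$ rather than a proper divisor.

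Finally I would invoke the extension-presentation lemma: with the quotient presented by the generators $\bar t_i$ and relators $\{\text{braid},\ \delta^n,\ Z\}$, the extension is presented by $t_1,\dots,t_{n-1},z$ subject to $[z,t_i]=1$ for all $i$, $z^d=1$, the braid relations, and the lifted relators $\delta^n=1$ and $Z=z$. Substituting $z=Z$ and discarding the now-redundant generator collapses the presentation to the five displayed families of relations, where the full centrality requirement $[z,t_i]=1$ reduces, modulo the braid relations and $(t_1\cdots t_{n-1})^n=1$, to the single commutator relation. This completes the proof.
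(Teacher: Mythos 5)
The paper offers no proof of this statement: it is quoted verbatim from Birman--Hilden, so the only available comparison is with the classical argument, and your outline---realize $\text{Mod}_H(S_g)$ as a central extension of $\text{Mod}(S_{0,n})$ by the deck group, lift the Magnus presentation of $\text{Mod}(S_{0,n})$, and compute the central defects of the two sphere relators---is exactly that argument in shape. However, two of your key steps are not proofs as written. First, centrality of $H$: ``permutes the sheets while preserving their cyclic ordering'' is not a meaningful invariant, since the sheets of a branched cover carry no global cyclic order. The correct argument is local: conjugation by an orientation-preserving fiber-preserving homeomorphism preserves the rotation number of the deck generator at ramification points, so $fhf^{-1}=h^m$ forces $m=1$ \emph{provided all branch points carry the same local monodromy}. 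This holds in the numerically admissible cases relevant here (for $g\geq 2$ one is in case (1) of the Ghaswala--Winarski classification, where all $n_i$ are equal), but it can genuinely fail for cyclic covers with unequal branching data, so the step needs this input rather than a general principle.

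Second, your identification of the lift of $Z=\sigma_1\cdots\sigma_{n-1}\sigma_{n-1}\cdots\sigma_1$ is incorrect as stated: $Z$ is the sphere relator, hence \emph{trivial} in $\text{Mod}(S_{0,n})$, so the product $t_1\cdots t_{n-1}t_{n-1}\cdots t_1$ of your chosen lifts is a deck transformation; it is not ``the twist about the preimage of that curve''---a curve enclosing all $n$ branch points has trivial monodromy, its preimage is a disjoint union of $d$ curves, and no twist about that multicurve is a deck transformation. Determining \emph{which} power of $h$ this product equals, and whether the defect of $\delta^n$ vanishes, requires an actual computation (e.g.\ tracking a lifted arc through the $2(n-1)$ half-twists); that computation is precisely the content of the theorem, and you defer it with a gesture at an ``Euler-number/chain-relation'' argument. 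Similarly, your final reduction of centrality to a single commutator relation needs the identities $\delta t_i\delta^{-1}=t_{i+1}$ together with $\delta$-invariance of the central element, which is not free. One useful byproduct of your reconstruction: carried out correctly it yields the relation $[t_1\cdots t_{n-1}t_{n-1}\cdots t_1,\,t_1]=1$, whereas the paper prints $(t_1\cdots t_{n-1},t_1)=1$; read literally, the latter combined with the braid relations forces $t_1=t_2=\cdots=t_{n-1}$ and collapses the group to the cyclic group of order $(n-1)\gcd(n,2d)$, so the printed relation must be a typo for the one your argument produces (both readings agree on the abelianization, which is why Corollary \ref{abelianizationcorollary} is unaffected).
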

\begin{corollary}\label{abelianizationcorollary}
Assume that $p: S_g\rightarrow S_0$ is a numerically admissible cyclic covering of degree $d\geq 2$ with deck group $H$. Then the abelianization of $\text{Mod}_H(S_g)$ is cyclic of order $(n-1)\text{gcd}(n,2d)$.
\end{corollary}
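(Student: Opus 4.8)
The plan is to compute the abelianization directly from the presentation given in Theorem \ref{birmanhildentheorem}, since abelianizing a finitely presented group is a mechanical (if sometimes delicate) procedure: one takes the free abelian group on the generators and imposes the relations written additively.

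First I would abelianize. In the abelianization, write $\bar t_i$ for the image of $t_i$. The two braid-type relations $t_it_j=t_jt_i$ and $t_it_{i+1}t_i=t_{i+1}t_it_{i+1}$ both become trivial or force all generators to be equal: the braid relation $t_it_{i+1}t_i=t_{i+1}t_it_{i+1}$ gives $2\bar t_i+\bar t_{i+1}=\bar t_i+2\bar t_{i+1}$, i.e. $\bar t_i=\bar t_{i+1}$, so all $\bar t_i$ coincide with a single generator, call it $x$. The commutator relation $(t_1\cdots t_{n-1},t_1)=1$ is automatically trivial after abelianizing. So the abelianization is the cyclic group $\langle x\rangle$ modulo the relations coming from the two remaining power relations. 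The relation $(t_1\cdots t_{n-1})^n=1$ becomes $n(n-1)x=0$, since $t_1\cdots t_{n-1}$ is a product of $n-1$ generators each mapping to $x$. The relation $(t_1\cdots t_{n-1}t_{n-1}\cdots t_1)^d=1$ involves a product of $2(n-1)$ generators, so it becomes $2d(n-1)x=0$.

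Next I would combine these. The abelianization is $\ZZ/\langle n(n-1),\ 2d(n-1)\rangle = \ZZ/(n-1)\gcd(n,2d)$, using that $\gcd(n(n-1),2d(n-1))=(n-1)\gcd(n,2d)$. This immediately yields the claimed order $(n-1)\gcd(n,2d)$, and the abelianization is manifestly cyclic, generated by the common image $x$ of the Dehn half-twists.

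The only genuinely nontrivial point — and thus the main obstacle — is making sure that the two power relations translate correctly into the additive relations $n(n-1)x=0$ and $2d(n-1)x=0$, i.e. correctly counting the number of generator-letters in each word: $n-1$ factors raised to the $n$th power, and $2(n-1)$ factors raised to the $d$th power. Once the letter-counts are verified, the gcd identity $\gcd(n(n-1),2d(n-1))=(n-1)\gcd(n,2d)$ finishes the computation. I should also confirm that no further relations are hidden (the braid relations only collapse the generators and impose no order condition beyond the two displayed), so that the presentation of the abelianization is exactly $\langle x\mid n(n-1)x,\ 2d(n-1)x\rangle$ with nothing omitted.
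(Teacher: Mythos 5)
Your proposal is correct and follows essentially the same route as the paper: abelianize the Birman--Hilden presentation from Theorem \ref{birmanhildentheorem}, observe that the braid relations identify all generators with a single class $\bar t_1$, read off $2d(n-1)\bar t_1 = 0$ and $n(n-1)\bar t_1 = 0$ from the two power relations by counting letters, and conclude via $\gcd\bigl(n(n-1),\,2d(n-1)\bigr) = (n-1)\gcd(n,2d)$. The paper's proof is exactly this computation, so there is nothing to add.
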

\begin{proof}
The first and second relations say that $H_1(\text{Mod}_H(S_g),\ZZ)$ generated by a single element, say the class $\overline{t}_1$ of $t_1$. The third relation says that $2d(n-1)\overline{t}_1 = 0$ and the fourth relation says that $n(n-1)\overline{t}_1 = 0$. From this, it follows that 
$H_1(\text{Mod}_H(S_g),\ZZ)$ is isomorphic to  $\ZZ$ modulo the subgroup generated by $2d(n-1)$ and $n(n-1)$. Thus  $H_1(\text{Mod}_H(S_g),\ZZ)$ is isomorphic to $\ZZ/(n-1)\text{gcd}(n,2d)\ZZ$.
\end{proof}

%
%
\subsubsection{Balanced Superelliptic Covers}
We now consider a class of cyclic branched covers $p:S_g\rightarrow S_0$ that have the Birman--Hilden property, but for which the canonical homomorphism $\text{SMod}_p(S_g)\rightarrow \text{Mod}(S_{0,n})$ is not surjective. 

In \cite{ghaswalawinarski2} Ghaswala--Winarski studied a class of cyclic covers which they call \emph{balanced superelliptic covers}; these are the cyclic covers that are branched over $2n+2$ points, where $n = g/(d-1)$.

By finding an explicit presentation for $\text{LMod}_p(S_g,\textbf{P})$, Ghaswala-Winarski were able to prove the following result. 

\vspace{.1in}

\begin{theorem}[Ghaswala--Winarski \cite{ghaswalawinarski2}]\label{balancedsuperelliptic}
Suppose that $p: S_g\rightarrow S_0$ is a balanced superelliptic cover of degree $d\geq 3$ branched over a set $\textbf{P}$ of $2n+2$ points. 

\begin{equation}
H_1(\emph{LMod}_p(S_0, \textbf{P}),\ZZ) \cong \left\{
\begin{array}{lr}
\ZZ/2\ZZ \oplus \ZZ/2\ZZ \oplus \ZZ/(n(n-1)^2)\ZZ& \text{$n$ odd} \\
 \ZZ/2\ZZ \oplus \ZZ/(2n(n-1)^2)\ZZ& \text{$n$ even} 
\end{array}
\right.
\end{equation}
\end{theorem}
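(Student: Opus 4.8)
The plan is to reduce the computation to linear algebra over $\ZZ$ by first producing a finite presentation of the liftable mapping class group $\text{LMod}_p(S_0,\textbf{P})$ and then abelianizing it. The starting point is that, for a cyclic cover, a mapping class of $(S_0,\textbf{P})$ lifts precisely when the permutation it induces on the branch points is compatible with the monodromy data of $p$. For a balanced superelliptic cover the $2n+2$ branch points carry two monodromy types (local monodromies $1$ and $d-1\equiv -1 \bmod d$) arranged alternately, so liftability amounts to preserving this $2$-coloring, possibly together with a global color-swapping symmetry coming from the balanced arrangement. In this way $\text{LMod}_p(S_0,\textbf{P})$ is identified concretely as the stabilizer of the coloring inside $\text{Mod}(S_{0,2n+2})$, a subgroup of finite index by the theorem of Harvey--MacLachlan. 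A useful sanity check that guides the whole argument is that this liftability condition depends on the coloring pattern alone and not on $d$ (for $d\geq 3$), which explains why the degree does not appear in the final answer.

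First I would obtain an explicit presentation of this stabilizer. One route is algorithmic: starting from a standard finite presentation of $\text{Mod}(S_{0,2n+2})$ in terms of half-twists, apply the Reidemeister--Schreier procedure to the finite-index subgroup cut out by the coloring and then simplify by Tietze transformations to a workable generating set. A cleaner route, and the one that actually controls the answer, is geometric: exhibit a natural generating set consisting of the half-twists (together with the few rotation-type elements) that manifestly preserve the coloring, and then prove that the evident braid relations, commutation relations, and a small number of global relations (a chain/disk relation and a relation recording the balancing symmetry) form a complete set. Establishing \emph{completeness}---that no further relations are needed---is the main obstacle; I expect to prove it by analyzing the action of $\text{LMod}_p$ on a suitable arc complex, or by comparing the index of the candidate group against $\text{Mod}(S_{0,2n+2})$, so that the presentation is verified to define a group of the correct size.

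With a presentation in hand, the remaining step is routine. Abelianizing collapses the braid relations (forcing the images of all half-twists of a given color to coincide) and converts the global relations into integer relations among a small number of generators. Assembling these into a relation matrix and computing its Smith normal form yields the cyclic invariant factors. The dependence on the parity of $n$ is precisely a $2$-adic phenomenon in this Smith normal form: the global relations contribute coefficients divisible by $n(n-1)^2$ alongside a coefficient of $2$, and whether the extra factor of $2$ splits off as a separate $\ZZ/2\ZZ$ summand or merges into a single $\ZZ/(2n(n-1)^2)\ZZ$ summand is governed by $\gcd(2,n)$. Tracking the $2$-part carefully through the elementary divisors produces the two stated cases, both of total order $4n(n-1)^2$.
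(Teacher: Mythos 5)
The paper does not actually prove this statement: it is imported verbatim from Ghaswala--Winarski \cite{ghaswalawinarski2}, and the only indication of proof given is the remark that they established it ``by finding an explicit presentation for the liftable mapping class group'' and (implicitly) abelianizing. Your outline is therefore essentially a reconstruction of the cited argument rather than an alternative to it: the lifting criterion for cyclic covers of the sphere does reduce to the induced permutation of the branch points (a homeomorphism lifts iff the permutation rescales the monodromy vector by a unit of $\ZZ/d\ZZ$), and for the balanced arrangement of local monodromies $1$ and $-1$ with $d\geq 3$ the only relevant units are $\pm 1$, so $\text{LMod}_p(S_0,\textbf{P})$ is exactly the finite-index subgroup of $\text{Mod}(S_{0,2n+2})$ preserving the two-block partition up to a global swap, independent of $d$ --- your sanity check is correct and is precisely why $d$ is absent from the answer. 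Presenting that subgroup and running Smith normal form, with the parity of $n$ entering as $2$-adic bookkeeping (both answers have total order $4n(n-1)^2$), is what Ghaswala--Winarski do.

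One step in your plan would fail as stated: you cannot certify completeness of a candidate presentation ``by comparing the index of the candidate group against $\text{Mod}(S_{0,2n+2})$.'' Both the abstractly presented group and $\text{LMod}_p(S_0,\textbf{P})$ are infinite, and a surjection from the candidate onto the subgroup carries no cardinality or index data that could detect a nontrivial kernel; a proper quotient situation is invisible to index counting. The viable routes are the ones you mention first or their standard refinements: an honest Reidemeister--Schreier computation (feasible here, since the quotient of $\text{LMod}_p$ by the pure mapping class group is the finite group $(S_{n+1}\times S_{n+1})\rtimes \ZZ/2\ZZ$, so one can instead assemble a presentation from the extension $1\rightarrow \text{PMod}(S_{0,2n+2})\rightarrow \text{LMod}_p(S_0,\textbf{P})\rightarrow Q\rightarrow 1$ with $Q$ this finite permutation group and both ends explicitly presented), or an action on a suitable simply-connected arc or curve complex with presentable stabilizers. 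With that replacement your plan is sound and coincides in substance with the proof in \cite{ghaswalawinarski2}; note also that in the abelianization step you should justify that all half-twists of a given combinatorial type are conjugate \emph{within} $\text{LMod}_p$ (not merely within $\text{Mod}(S_{0,2n+2})$) before identifying their images in $H_1$.
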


\vspace{.1in}

\begin{remark}Since $p$ has the Birman--Hilden property, there is a short exact sequence
$$1\rightarrow H\rightarrow \text{SMod}_p(S_g)\rightarrow \text{LMod}_p(S_0,\textbf{P})\rightarrow 1.$$
Since $H$ is finite, Theorem \ref{balancedsuperelliptic} implies that $H_1(\text{SMod}_p(S_g),\ZZ)$ is finite. 
\end{remark}

\subsection{Picard groups of moduli spaces of cyclic covers of $\PP^1$}
By combining Corollary \ref{abelianizationcorollary} and Theorem \ref{balancedsuperelliptic} with Theorem \ref{maintheorem1}, we immediately obtain the following. 

\begin{corollary}
Let $g\geq 2$ and suppose that $H < \text{Mod}(S_g)$ is a finite cyclic group of order $d \geq 2$. Let $p$ denote the regular branched covering $S_g\rightarrow S_g/H\cong \PP^1$. 

\begin{enumerate}
\item If $p$ is a numerically admissible cover branched over $n$ points, then $\emph{Pic}\ \cM_g^H$ is cyclic of order $(n-1)gcd(2d,n)$.

\item If $p$ is a balanced superelliptic cover of degree $d\geq 3$ branched over $2n+2$ points with $n = g/(d-1)$, then 
\begin{equation*}
\emph{coker}\left(H \rightarrow \emph{Pic}\ \cM_g^H\right) \cong \left\{
\begin{array}{lr}
\ZZ/2\ZZ \oplus \ZZ/2\ZZ \oplus \ZZ/(n(n-1)^2)\ZZ& \text{$n$ odd} \\
 \ZZ/2\ZZ \oplus \ZZ/(2n(n-1)^2)\ZZ& \text{$n$ even} 
\end{array}
\right.
\end{equation*}
\end{enumerate}
\end{corollary}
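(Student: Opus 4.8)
The plan is to read off both statements directly from Part~\ref{part1} of Theorem~\ref{maintheorem1}, which applies in each case because $S_g/H\cong\PP^1$. The only substantive work is checking that the relevant abelianization is finite (so the hypothesis of Theorem~\ref{maintheorem1} is met) and, for the second part, identifying the cokernel of the map $H\to\text{Pic}\ \cM_g^H$ with a group already computed by Ghaswala--Winarski. Everything else is a citation.

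For the first part, I would begin by invoking Corollary~\ref{abelianizationcorollary}, which gives $H_1(\text{Mod}_H(S_g),\ZZ)\cong\ZZ/(n-1)\gcd(n,2d)\ZZ$; this is in particular finite. Since $S_g/H\cong\PP^1$, the finiteness hypothesis of Part~\ref{part1} of Theorem~\ref{maintheorem1} holds, so the first Chern class induces an isomorphism $\text{Pic}\ \cM_g^H\xrightarrow{\cong}H_1(\text{Mod}_H(S_g),\ZZ)$. Reading off the order, and using that $\gcd$ is symmetric, shows $\text{Pic}\ \cM_g^H$ is cyclic of order $(n-1)\gcd(2d,n)$. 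This step is entirely formal.

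For the second part, set $Q=\text{LMod}_p(S_0,\textbf{P})$. The balanced superelliptic cover is again a cover of the sphere, so $S_g/H\cong\PP^1$, and the remark following Theorem~\ref{balancedsuperelliptic} shows—via the Birman--Hilden extension \eqref{birmanhildenextension} and the finiteness of $H$—that $H_1(\text{SMod}_p(S_g),\ZZ)$ is finite. Hence Part~\ref{part1} of Theorem~\ref{maintheorem1} again yields $\text{Pic}\ \cM_g^H\cong H_1(\text{SMod}_p(S_g),\ZZ)$, under which the map $H\to\text{Pic}\ \cM_g^H$ corresponds to the map $H=H_1(H,\ZZ)\to H_1(\text{SMod}_p(S_g),\ZZ)$ induced by the inclusion $H\hookrightarrow\text{SMod}_p(S_g)$. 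The key step is then to apply the right-exact tail of the homology five-term exact sequence of \eqref{birmanhildenextension},
$$H_1(H,\ZZ)_{Q}\to H_1(\text{SMod}_p(S_g),\ZZ)\to H_1(Q,\ZZ)\to 0.$$
Since the map out of $H_1(H,\ZZ)$ and the map out of its coinvariants have the same image, this identifies $\text{coker}(H\to\text{Pic}\ \cM_g^H)$ with $H_1(Q,\ZZ)$, and substituting the formula of Theorem~\ref{balancedsuperelliptic} gives the stated answer.

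I expect no genuine obstacle, since every ingredient is already in place; the phrase ``we immediately obtain'' is accurate. The one point deserving care—and the only place where the argument is more than a concatenation of citations—is the second part's cokernel computation: one must be explicit that $H\to\text{Pic}\ \cM_g^H$ is the abelianized inclusion, so that the homology five-term sequence computes its cokernel as $H_1(Q,\ZZ)$. Once that identification is made, the result follows at once.
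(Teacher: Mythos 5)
Your proposal is correct and follows essentially the same route as the paper, which simply combines Corollary \ref{abelianizationcorollary}, Theorem \ref{balancedsuperelliptic}, and Part \ref{part1} of Theorem \ref{maintheorem1} without further comment. Your explicit use of the five-term exact sequence of the Birman--Hilden extension to identify $\text{coker}(H\to \text{Pic}\ \cM_g^H)$ with $H_1(\text{LMod}_p(S_0,\textbf{P}),\ZZ)$ just spells out the step the paper leaves implicit in the remark following Theorem \ref{balancedsuperelliptic}.
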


\vspace{.1in}

\section{The higher genus case}\label{sectionhighergenus}
In this section, we focus on regular branched coverings $p:S_g\rightarrow S_h$ such that $h\geq 3$ as well as certain unbranched nilpotent coverings.

\subsection{Abelian symmetries}
We begin with the following result of Putman.
\begin{theorem}[Putman \cite{putmannote}]\label{putmanstheorem}
Assume that $g\geq 3$. Suppose that $\Gamma$ is a finite-index subgroup of $\text{PMod}(S_{g,n})$ that contains $K_{g,n}$. Then $H_1(\Gamma,\QQ) = 0$.
\end{theorem}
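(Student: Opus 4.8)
The plan is to filter $\Gamma$ by its intersections with the Johnson filtration $\cK(S_{g,n})\vartriangleleft \cI(S_{g,n})\vartriangleleft \text{PMod}(S_{g,n})$ and to feed the successive quotients into the homological five-term exact sequence, thereby reducing the statement to two inputs: the vanishing of the rational abelianization of any finite-index subgroup of $\Sp_g(\ZZ)$, and Johnson's identification of the Torelli quotient $\cI/\cK$ with a symplectic representation having no trivial constituent. Throughout write $\cI = \cI(S_{g,n})$ for the Torelli group and $K_{g,n} = \cK(S_{g,n})$ for the Johnson subgroup of the statement.

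First I would set $N = \Gamma\cap \cI$ and let $\bar\Gamma\subset \Sp_g(\ZZ)$ be the image of $\Gamma$, which has finite index. It is a standard fact (for instance via Kazhdan's property (T), which $\Sp_g(\ZZ)$ enjoys for $g\geq 2$) that every finite-index subgroup of $\Sp_g(\ZZ)$ has finite abelianization; in particular $H_1(\bar\Gamma,\QQ)=0$. Applying the homological five-term sequence to $1\to N\to \Gamma\to \bar\Gamma\to 1$,
$$H_2(\bar\Gamma,\QQ)\to (H_1(N,\QQ))_{\bar\Gamma}\to H_1(\Gamma,\QQ)\to H_1(\bar\Gamma,\QQ)\to 0,$$
and using $H_1(\bar\Gamma,\QQ)=0$, reduces the theorem to proving that the coinvariants $(H_1(N,\QQ))_{\bar\Gamma}$ vanish.

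Next I would strip off the Johnson piece. Since $K_{g,n}\subseteq N\subseteq \cI$ with $N$ of finite index, the Johnson homomorphism identifies $L := N/K_{g,n}$ with a finite-index subgroup of the image $\tau(\cI)$, so that $L\otimes\QQ$ is a nontrivial rational symplectic representation with no trivial constituent (it is $V(\lambda_3)$, together with copies of $V(\lambda_1)$ contributed by the punctures). Feeding $1\to K_{g,n}\to N\to L\to 1$ into the five-term sequence,
$$H_2(L,\QQ)\to (H_1(K_{g,n},\QQ))_L\to H_1(N,\QQ)\to L\otimes\QQ\to 0,$$
and then applying the right-exact coinvariants functor $(-)_{\bar\Gamma}$, exhibits $(H_1(N,\QQ))_{\bar\Gamma}$ as an extension of $(L\otimes\QQ)_{\bar\Gamma}$ by a quotient of $(H_1(K_{g,n},\QQ))_\Gamma$, using the iterated-coinvariants identity $((H_1(K_{g,n},\QQ))_L)_{\bar\Gamma}=(H_1(K_{g,n},\QQ))_\Gamma$. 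The former vanishes: by Borel density $\bar\Gamma$ is Zariski-dense in $\Sp_g$, and the coinvariants of a rational representation with no trivial constituent under a Zariski-dense subgroup are zero. Thus everything comes down to the single claim $(H_1(K_{g,n},\QQ))_\Gamma = 0$.

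This last claim is what I expect to be the main obstacle, because $H_1(K_{g,n},\QQ)$ is poorly understood and is in fact infinite-dimensional in low genus. Since $K_{g,n}$ is generated by separating Dehn twists, and the image of $K_{g,n}$ in $H_1(\Gamma,\QQ)$ is precisely a quotient of these coinvariants, I would try to show that the class of each separating twist $T_\gamma$ is torsion in $H_1(\Gamma,\QQ)$. The natural tool is the lantern relation: embedding a four-holed sphere so that its seven lantern curves are separating realizes a relation $T_{d_1}T_{d_2}T_{d_3}T_{d_4}=T_xT_yT_z$ among elements of $K_{g,n}\subseteq\Gamma$, and combining it with the fact that $\Gamma$-conjugate twists have equal image in $H_1(\Gamma,\QQ)$ should force the needed cancellation. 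The delicate point is arranging the change-of-coordinates homeomorphisms that implement these conjugacies to lie in the finite-index group $\Gamma$ itself, rather than merely in $\text{Mod}(S_{g,n})$; controlling this is the technical heart of the argument.
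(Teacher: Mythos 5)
You should first note that the paper contains no proof of this statement at all: it is quoted verbatim from Putman's note \cite{putmannote}, so the only meaningful comparison is with Putman's published argument. Your reduction scheme is sound and is in fact the standard skeleton shared by Hain's proof of the Torelli case and by Putman's note: the five-term sequence for $1\to N\to\Gamma\to\bar\Gamma\to 1$ together with $H_1(\bar\Gamma,\QQ)=0$ (property (T) or Borel), then the Johnson homomorphism plus Borel density to kill $(L\otimes\QQ)_{\bar\Gamma}$. One small correction: you do not need, and should not claim, $(H_1(\cK(S_{g,n}),\QQ))_\Gamma=0$ --- that is stronger than what your own exact sequences require (the $H_2$ terms absorb part of the coinvariants), it is not known, and what suffices is exactly what you then pivot to, namely that the class of every separating twist dies in $H_1(\Gamma,\QQ)$.

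The genuine gap is at that decisive step, which is the entire content of the theorem beyond Hain's earlier result for $\Gamma\supseteq\cI(S_g)$, and your sketch does not close it for two concrete reasons. First, a lantern relation in which all seven curves are separating forces the four complementary subsurfaces to have positive genus (or carry marked points), so on a closed surface it requires $g\geq 4$; your mechanism as stated cannot reach the boundary case $g=3$, $n=0$ of the theorem. Second, the ``delicate point'' you defer --- arranging the change-of-coordinates homeomorphisms to lie in $\Gamma$ --- is precisely what fails in general: a finite-index subgroup only contains \emph{powers} of the relevant mapping classes, and powers do not implement the curve substitutions you need, so conjugate separating twists need not have equal classes in $H_1(\Gamma,\QQ)$. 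Putman's actual proof avoids conjugating inside $\Gamma$ altogether: he passes to the normal core $\Gamma'$ of $\Gamma$ (which still contains $\cK(S_{g,n})$, since the Johnson subgroup is normal in the full mapping class group), and uses the transfer isomorphism $H^1(\text{Mod}(S_{g,n}),\QQ)\cong H^1(\Gamma',\QQ)^{\text{Mod}(S_{g,n})/\Gamma'}$ together with $H^1(\text{Mod}(S_{g,n}),\QQ)=0$ to control how a class $\phi\in H^1(\Gamma,\QQ)$ varies over the $\text{Mod}$-orbit of a separating twist, and only then feeds lantern-type relations into this averaged data to force $\phi$ to vanish on all separating twists. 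So your proposal has the right reductions but omits the averaging/transfer idea that makes the conjugation problem tractable, and hence does not constitute a proof.
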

Let $(X, \textbf{P})$ denote a closed surface with a finite set $\textbf{P}$ of marked points. A separating simple closed curve on $(X,\textbf{P})$ is a simple closed curve on $X$ that is disjoint from $\textbf{P}$ and divides $X$ into two subsurfaces with exactly one boundary component each. We refer to a Dehn twist along such a curve as a separating twist. 

\begin{lemma}\label{dehntwistslift}
Suppose that $p:S_g\rightarrow S_h$ is a finite-sheeted regular covering with a set $\textbf{P}\subset S_h$ of $n$ branch points and abelian deck group $H$. Then any separating twist on $(S_h,\bf{P})$ lifts to a homeomorphism of $S_g$.
\end{lemma}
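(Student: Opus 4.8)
The plan is to reduce the lifting problem to a purely homological condition and then verify that condition for separating twists. First I would restrict the branched covering $p$ to the associated unbranched covering $q : S_g \setminus p^{-1}(\mathbf{P}) \to S_h \setminus \mathbf{P}$, which is regular with deck group $H$ and is therefore classified by a surjection $\rho : \pi_1(S_h \setminus \mathbf{P}) \to H$ with normal kernel $N = \ker \rho$. A separating twist $T_c$ is supported in an annular neighborhood of $c$ disjoint from $\mathbf{P}$, so it represents a mapping class of $(S_h, \mathbf{P})$ fixing each point of $\mathbf{P}$; by the standard theory of lifts of homeomorphisms underlying the Birman--Hilden setup of Section \ref{sectionsymmetricmappingclassgroups}, $T_c$ lifts to $S_g$ if and only if its restriction lifts across $q$, which in turn holds if and only if $(T_c)_* N = N$ as subgroups of $\pi_1(S_h \setminus \mathbf{P})$.

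The second step uses the hypothesis that $H$ is abelian. Since $H$ is abelian, $N$ contains the commutator subgroup of $\pi_1(S_h \setminus \mathbf{P})$, so $\rho$ factors through a homomorphism $\bar\rho : H_1(S_h \setminus \mathbf{P},\ZZ) \to H$, and the condition $(T_c)_* N = N$ becomes the condition that the automorphism induced by $T_c$ on $H_1(S_h \setminus \mathbf{P},\ZZ)$ preserve the subgroup $\ker \bar\rho$.

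The heart of the argument is then to show that $T_c$ in fact acts as the identity on $H_1(S_h \setminus \mathbf{P},\ZZ)$, which makes the preceding condition automatic. The Dehn twist $T_c$ acts by the transvection $x \mapsto x + \hat{\imath}(x,c)\,[c]$, where $\hat{\imath}$ is the algebraic intersection pairing. I would observe that this pairing factors through the map $H_1(S_h \setminus \mathbf{P},\ZZ) \to H_1(S_h,\ZZ)$ obtained by filling in the punctures: a small loop around a puncture can be pushed off any closed curve, so it pairs trivially with everything, and such loops span the kernel of that map. Because $c$ is separating in the \emph{closed} surface $S_h$, its class vanishes in $H_1(S_h,\ZZ)$; hence $\hat{\imath}(x,c) = 0$ for every $x$, and $(T_c)_*$ is the identity on $H_1(S_h \setminus \mathbf{P},\ZZ)$. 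The identity preserves $\ker \bar\rho$, so $T_c$ lifts, completing the proof.

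I expect the main obstacle to be the careful bookkeeping in the first step: establishing cleanly that lifting the mapping class across the branched cover is equivalent to the subgroup condition $(T_c)_* N = N$ on the unbranched part. This includes verifying that a lift of the restriction extends continuously across $p^{-1}(\mathbf{P})$ (which follows from the local model $z \mapsto z^{k}$ of $p$ near a branch point, together with the fact that $T_c$ fixes $\mathbf{P}$) and that basepoint ambiguities are harmless because $N$ is normal. I would emphasize that abelianness of $H$ is essential precisely in the second step: a separating twist can act nontrivially on $\pi_1(S_h \setminus \mathbf{P})$, so the homological triviality of $(T_c)_*$ would not on its own guarantee lifting for a nonabelian deck group.
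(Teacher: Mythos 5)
Your proof is correct and follows essentially the same route as the paper: both pass to the unbranched covering $S_g\setminus p^{-1}(\mathbf{P})\rightarrow S_h\setminus \mathbf{P}$, invoke the homological lifting criterion available because $H$ is abelian, show the separating twist acts trivially on $H_1(S_h\setminus \mathbf{P},\ZZ)$, and extend the lift over the branch points. The only cosmetic difference is how triviality of the homology action is verified -- you use the transvection formula $x\mapsto x+\hat{\imath}(x,c)[c]$ together with the vanishing of intersection numbers with a separating curve, while the paper chooses a basis of $H_1(S_h\setminus \mathbf{P},\ZZ)$ consisting of cycles disjoint from $c$.
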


\begin{proof}
Define the punctured surfaces $S_h' = S_h\setminus \textbf{P}$, and let $S_g' = S_g\setminus p^{-1}(\textbf{P})$. Then $p$ restricts to a regular unbranched covering $p': S_g'\rightarrow S_h'$ with deck group $H$. Since $H$ is abelian, covering space theory shows that a homeomorphism of $S'_h$ lifts to $S_g'$ if and only if it preserves the kernel of the homomorphism $H_1(S_h',\ZZ)\rightarrow H$ determined by $p'$. 

Let $c$ be a separating simple closed curve on $(S_h,\textbf{P})$. Let $T_c$ denote a Dehn twist on $c$, and let $T_c'$ denote its restriction to $S_h'$. By classification of surfaces, it is possible to find a basis for $H_1(S_h',\ZZ)$ consisting of cycles that are disjoint from $c$. It follows that $T_c'$ on $c$ acts trivially on $H_1(S_h',\ZZ)$, and therefore lifts to a homeomorphism $\varphi'$ of $S_g'$. The canonical extension of $\varphi'$ to a homeomorphism of $S_g$ is then a lift of $T_c$. 
\end{proof}

\begin{corollary}\label{johnsoncorollary}
Under the hypotheses of Lemma \ref{dehntwistslift}, the liftable mapping class group $\emph{LMod}_p(S_h, \bf{P})$ contains the Johnson subgroup $\cK(S_{g,n})$.
\end{corollary}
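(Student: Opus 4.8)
The plan is to deduce this directly from Lemma~\ref{dehntwistslift} together with the definition of the Johnson subgroup as a group generated by separating twists. Recall from the preliminaries that the Johnson subgroup is, by definition, the subgroup generated by all Dehn twists $T_c$ along simple closed curves $c$ that separate the surface into two pieces. Here the relevant ambient group is $\text{Mod}(S_h,\textbf{P})$, and $\cK$ is the subgroup generated by separating twists on $(S_h,\textbf{P})$. Thus it suffices to show that every such generator $T_c$ lies in $\text{LMod}_p(S_h,\textbf{P})$, and then to invoke the elementary fact that a subgroup containing a generating set of $\cK$ contains all of $\cK$.

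First I would observe that each generating separating twist $T_c$ is precisely a separating twist on $(S_h,\textbf{P})$ in the sense of Lemma~\ref{dehntwistslift}: the curve $c$ is disjoint from the branch locus $\textbf{P}$ and its complement has two components, each bounded by a single copy of $c$. Lemma~\ref{dehntwistslift} then applies verbatim and shows that $T_c$ lifts to a homeomorphism of $S_g$. By the very definition of the liftable mapping class group as the classes of homeomorphisms of $(S_h,\textbf{P})$ that lift along $p$, this is exactly the assertion that $T_c \in \text{LMod}_p(S_h,\textbf{P})$.

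Finally, since $\text{LMod}_p(S_h,\textbf{P})$ is a subgroup of $\text{Mod}(S_h,\textbf{P})$, it is closed under the group operations and hence contains the subgroup generated by the collection of all separating twists $T_c$. As this generated subgroup is exactly the Johnson subgroup, we conclude that $\cK \subset \text{LMod}_p(S_h,\textbf{P})$. There is essentially no obstacle beyond a small piece of bookkeeping, namely confirming that the notion of ``separating simple closed curve'' appearing in the definition of $\cK$ coincides with the one used in Lemma~\ref{dehntwistslift}; once this is checked, the corollary follows immediately. This containment is precisely what is needed to feed the liftable mapping class group (or its pure part) into Putman's Theorem~\ref{putmanstheorem} in the higher-genus case.
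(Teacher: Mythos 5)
Your proposal is correct and matches the paper's reasoning exactly: the paper states this as an immediate corollary of Lemma \ref{dehntwistslift} (with no separate proof), precisely because every generating separating twist on $(S_h,\textbf{P})$ lifts by that lemma and hence lies in $\text{LMod}_p(S_h,\textbf{P})$, which, being a subgroup, contains the group they generate. Your bookkeeping remark that the curves defining $\cK$ coincide with the separating curves of the lemma (disjoint from $\textbf{P}$, complement two one-boundary pieces) is the only point to check, and you handle it correctly.
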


\vspace{.1in}
\begin{proposition}\label{highergenusrationalabelianization}
Let $p: S_g\rightarrow S_h$ be a regular finite-sheeted abelian covering with $h\geq 3$. Then $H_1(\text{SMod}_p(S_g),\QQ) = 0$.
\end{proposition}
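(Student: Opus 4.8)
The plan is to exploit the Birman--Hilden short exact sequence \eqref{birmanhildenextension} together with the transfer isomorphism to reduce the statement to a vanishing result for the liftable mapping class group, and then to invoke Putman's Theorem~\ref{putmanstheorem}. Since the deck group $H$ is finite, applying the rational Hochschild--Serre spectral sequence to
$$1\rightarrow H\rightarrow \text{SMod}_p(S_g)\rightarrow \text{LMod}_p(S_h,\textbf{P})\rightarrow 1$$
and using that $H^j(H,\QQ)=0$ for $j>0$ by Maschke's theorem, we obtain isomorphisms $H^j(\text{SMod}_p(S_g),\QQ)\cong H^j(\text{LMod}_p(S_h,\textbf{P}),\QQ)$ for all $j$, and dually for homology. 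Thus it suffices to show $H_1(\text{LMod}_p(S_h,\textbf{P}),\QQ)=0$.

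To apply Putman's theorem I must realize $\text{LMod}_p(S_h,\textbf{P})$ as a finite-index subgroup of $\text{PMod}(S_{h,n})$ containing the Johnson subgroup $\cK(S_{h,n})$. First I would pass from the full liftable group to its pure part: by Lemma~\ref{abeliancoverlemma}'s analogue (or directly, since $\text{PMod}(S_h,\textbf{P})$ is finite-index in $\text{Mod}(S_h,\textbf{P})$ and $\text{LMod}_p$ is finite-index in $\text{Mod}(S_h,\textbf{P})$ by Harvey--MacLachlan), the intersection $\Gamma := \text{LMod}_p(S_h,\textbf{P})\cap \text{PMod}(S_{h,n})$ is a finite-index subgroup of $\text{PMod}(S_{h,n})$. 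Because $\Gamma$ has finite index in $\text{LMod}_p(S_h,\textbf{P})$, the transfer gives an injection $H^1(\text{LMod}_p(S_h,\textbf{P}),\QQ)\hookrightarrow H^1(\Gamma,\QQ)$, so it is enough to prove $H_1(\Gamma,\QQ)=0$. The hypothesis $h\geq 3$ matches the requirement $g\geq 3$ in Theorem~\ref{putmanstheorem}, so the remaining task is to verify that $\Gamma$ contains $\cK(S_{h,n})=K_{g,n}$ in Putman's notation.

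This containment is exactly the content of Corollary~\ref{johnsoncorollary}, which asserts that $\text{LMod}_p(S_h,\textbf{P})$ contains the Johnson subgroup $\cK(S_{h,n})$, the group generated by separating twists, as guaranteed by Lemma~\ref{dehntwistslift} for abelian covers. Since $\cK(S_{h,n})$ consists of pure mapping classes, it lies inside $\Gamma$. Putman's Theorem~\ref{putmanstheorem} then yields $H_1(\Gamma,\QQ)=0$, whence $H_1(\text{LMod}_p(S_h,\textbf{P}),\QQ)=0$ and finally $H_1(\text{SMod}_p(S_g),\QQ)=0$, as desired.

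The main obstacle is the bookkeeping around the pure versus full liftable group and the precise identification of $\cK(S_{h,n})$ with Putman's $K_{g,n}$: one must be careful that the separating twists furnished by Lemma~\ref{dehntwistslift} are genuinely pure mapping classes landing in $\Gamma$, and that the finite-index-plus-transfer reduction is applied to the correct subgroup. Once those identifications are pinned down the argument is essentially a concatenation of the cited results, so the only real work is in correctly assembling the two reductions (killing $H$ via the spectral sequence, and passing to the pure subgroup via transfer) so that Putman's hypotheses are literally met.
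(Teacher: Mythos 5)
Your proposal is correct and follows essentially the same route as the paper: the paper likewise forms $G_{h,n} = \text{PMod}(S_h,\textbf{P})\cap \text{LMod}_p(S_h,\textbf{P})$, invokes Corollary \ref{johnsoncorollary} to get the Johnson subgroup containment, applies Theorem \ref{putmanstheorem}, and uses the finite-index (transfer) surjection $H_1(G_{h,n},\QQ)\twoheadrightarrow H_1(\text{LMod}_p(S_h,\textbf{P}),\QQ)$ before killing the finite deck group $H$. The only cosmetic difference is that you dispose of $H$ at the outset via Hochschild--Serre and Maschke, whereas the paper uses the 5-term exact sequence of the Birman--Hilden extension at the end; these are interchangeable.
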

\begin{proof}
The liftable mapping class group $\text{LMod}_p(S_h,\textbf{P})$ has finite index in $\text{Mod}(S_h, \textbf{P})$. By Corollary \ref{johnsoncorollary}, it contains $\cK(S_{g,n})$. It follows that 
$$G_{h,n}: = \text{PMod}(S_h, \textbf{P})\cap \text{LMod}_p(S_h,\textbf{P})$$ 
is a finite-index subgroup of $\text{PMod}(S_h,\textbf{P})$ containing the Johnson subgroup $\cK(S_{h,n})$. By Theorem \ref{putmanstheorem}, $H_1(G_{h,n},\QQ) = 0$. On the other hand $G_{g,n}$ is a finite-index subgroup of $\text{LMod}_p(S_h,\textbf{P})$, so the natural map
$$ 0 \cong H_1(G_{h,n},\QQ)\rightarrow H_1(\text{LMod}_p(S_h,\textbf{P}),\QQ)$$
is a surjection. Finally, since $H_0$ is finite, the 5-term exact sequence associated to the extension 
$$1\rightarrow H\rightarrow \text{SMod}_p(S_g)\rightarrow \text{LMod}_p(S_h,\textbf{P})\rightarrow 1$$
immediately gives that $H_1(\text{SMod}_p(S_g),\QQ) = 0$.
\end{proof}

\vspace{.1in}

\begin{corollary}\label{finiteabelianization}
Under the hypotheses of Proposition \ref{highergenusrationalabelianization}, $H_1(\emph{SMod}_p(S_g),\ZZ)$ is finite. 
\end{corollary}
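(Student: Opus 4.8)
The plan is to deduce the finiteness of $H_1(\text{SMod}_p(S_g),\ZZ)$ directly from two facts already established in the excerpt: first, that this group is finitely generated, and second, that its rationalization vanishes. Indeed, at the end of the section on finiteness properties of symmetric mapping class groups, it is observed that $\text{SMod}_p(S_g)$ is finitely presented, so in particular its first integral homology $H_1(\text{SMod}_p(S_g),\ZZ)$ is a finitely generated abelian group. Meanwhile, Proposition \ref{highergenusrationalabelianization} asserts that $H_1(\text{SMod}_p(S_g),\QQ) = 0$ under exactly the hypotheses assumed here.

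The key algebraic fact I would invoke is that for any abelian group $A$ one has a natural isomorphism $H_1(\text{SMod}_p(S_g),\QQ) \cong H_1(\text{SMod}_p(S_g),\ZZ)\otimes_{\ZZ}\QQ$, coming from the universal coefficients theorem (the relevant $\text{Tor}$ term vanishes because $\QQ$ is flat over $\ZZ$). Thus the vanishing of $H_1(\text{SMod}_p(S_g),\QQ)$ is equivalent to the vanishing of $H_1(\text{SMod}_p(S_g),\ZZ)\otimes \QQ$. A finitely generated abelian group $A$ satisfies $A\otimes \QQ = 0$ if and only if $A$ has rank zero, i.e. if and only if $A$ is finite (its free part is trivial and a finitely generated torsion group is finite). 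Combining these two steps gives the conclusion immediately.

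Concretely I would argue as follows. By the finite presentability of $\text{SMod}_p(S_g)$, the group $H_1(\text{SMod}_p(S_g),\ZZ)$ is finitely generated, hence isomorphic to $\ZZ^r \oplus F$ for some $r\geq 0$ and some finite abelian group $F$. Tensoring with $\QQ$ yields $H_1(\text{SMod}_p(S_g),\ZZ)\otimes \QQ \cong \QQ^r$, which is naturally identified with $H_1(\text{SMod}_p(S_g),\QQ)$. By Proposition \ref{highergenusrationalabelianization} this space is zero, forcing $r = 0$. Therefore $H_1(\text{SMod}_p(S_g),\ZZ)\cong F$ is finite.

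There is no real obstacle here: the statement is a formal corollary of the preceding proposition together with finite generation, and the only point requiring any care is recording the precise reason that $\text{SMod}_p(S_g)$ is finitely presented (and hence has finitely generated abelianization). That justification is already in hand from the Birman--Hilden short exact sequence \eqref{birmanhildenextension}, in which a finitely presented group $\text{LMod}_p(S_h,\textbf{P})$ is an extension by the finite group $H$, so I would simply cite that discussion rather than reproving it.
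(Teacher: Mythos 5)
Your proposal is correct and is exactly the argument the paper intends: the corollary is stated without proof precisely because it follows formally from Proposition \ref{highergenusrationalabelianization} together with the finite generation of $H_1(\text{SMod}_p(S_g),\ZZ)$, which the paper established earlier via the Birman--Hilden extension (\ref{birmanhildenextension}) and the Harvey--MacLachlan finite-index result. Your identification $H_1(\text{SMod}_p(S_g),\QQ)\cong H_1(\text{SMod}_p(S_g),\ZZ)\otimes\QQ$ and the rank-zero conclusion are precisely the standard steps being elided.
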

It is now a simple matter to prove Part \ref{part2} of Theorem \ref{maintheorem1}.
\begin{proof}[Proof of Theorem \ref{maintheorem1}]
Since $\mathfrak{X}_g$ is contractible, Proposition \ref{equivariantcohomologyprop} implies that 
$$H^2_{\text{Mod}_H(S_g)}(\mathfrak{X}^H_g,\ZZ) \cong H^2(\text{Mod}_H(S_g),\ZZ).$$ 
 Together, Theorem \ref{hainpic0theorem} and Corollary \ref{finiteabelianization} imply that the first Chern class 
$$\text{Pic}\ \cM_g^{H}\rightarrow H^2(\text{SMod}_p(S_g),\ZZ)$$
is injective. Since $\text{SMod}_p(S_g)$ is finitely presented,  $H^2(\text{SMod}_p(S_g),\ZZ)$ is finitely generated; thus $\text{Pic}\ \cM_g^{H}$ is finitely generated. Theorem \ref{putmanpicardtheorem} implies that the image of $c_1$ contains the torsion subgroup of $H^2(\text{SMod}_p(S_g),\ZZ)$, which by Corollary \ref{finiteabelianization} is isomorphic to $H_1(\text{SMod}_p(S_g),\ZZ)$.
\end{proof}

\subsection{Nilpotent symmetries}\label{nilpotentsymmetries}
So far, we have only discussed the symmetric mapping class groups $\text{Mod}_H(S_g)$ with $H$ abelian. However, there is nothing in our general setup that prevents us from studying the behavior of these groups, or the corresponding moduli spaces of symmetric curves, when $H$ is nonabelian.

In this section, we will show that, under certain numerical restrictions, $\cM_g^H$ has finitely generated Picard group provided $H$ is nilpotent and the action of $H$ on $S_g$ is free. Our proof makes use of recent work of Ershov--He \cite{ershovhe}.

Beyond the cases where $H$ is nilpotent, we do not know of any other types of nonabelian groups for which the Picard group of $\cM_g^H$ is finitely generated. The following is a natural question. 

\begin{question}\label{question1}
For which nonabelian groups $H < \text{Mod}(S_g)$ do the moduli spaces $\cM_g^H$ have finitely generated Picard groups?
\end{question}
By Theorem \ref{hainpic0theorem}, finite generation of $\cM_g^H$ would follow from the vanishing of $H_1(\text{Mod}_H(S_g),\QQ)$. Assume that $H$ acts freely on $S_g$ and that the quotient surface $S_h = S_g/H$ has genus at least 3. Since $\text{Mod}_H(S_g)/H$ is isomorphic to a finite-index subgroup of $\text{Mod}(S_h)$ the following conjecture of Ivanov is quite relevant.

\begin{conjecture}[Ivanov \cite{ivanov}]\label{ivanovconjecture}
For $g\geq 3$ and any finite-index subgroup $\Gamma < \text{Mod}(S_g)$, we have $H_1(\Gamma,\QQ) = 0$.
\end{conjecture}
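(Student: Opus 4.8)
Because Ivanov's Conjecture is a well-known open problem, what follows is a strategy rather than a complete argument: the machinery assembled above does carry out the initial reductions, but the decisive step lies beyond current techniques. The plan is to recast the vanishing of $H_1(\Gamma,\QQ)$ as a statement about the twisted cohomology of the full group $\text{Mod}(S_g)$, in the spirit of Section \ref{sectionhighergenus}.

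First I would reduce to the normal case. Given a finite-index subgroup $\Gamma < \text{Mod}(S_g)$, its normal core $\Gamma_0 = \bigcap_f f\Gamma f^{-1}$ (a finite intersection) is normal and of finite index in $\text{Mod}(S_g)$ and is contained in $\Gamma$. Since $\Gamma_0$ is normal and of finite index in $\Gamma$, the transfer recalled in Section \ref{preliminaries} shows that the natural map $H_1(\Gamma_0,\QQ)\to H_1(\Gamma,\QQ)$ is surjective, so it suffices to treat $\Gamma_0$. We may therefore assume that $\Gamma$ is normal, with finite quotient $Q = \text{Mod}(S_g)/\Gamma$.

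Next I would pass to twisted coefficients. By Shapiro's lemma $H^1(\Gamma,\QQ)\cong H^1(\text{Mod}(S_g),\QQ[Q])$, and since $Q$ is finite the regular representation $\QQ[Q]$ splits as a sum of the irreducible rational $Q$-representations, each viewed as a $\text{Mod}(S_g)$-module through the quotient map. The conjecture thus becomes the assertion that $H^1(\text{Mod}(S_g),V) = 0$ for every irreducible rational representation $V$ of $\text{Mod}(S_g)$ with finite image. The trivial representation contributes nothing, since $\text{Mod}(S_g)$ is perfect for $g\geq 3$, so all of the content lies in the nontrivial $V$. These I would sort according to the Johnson kernel $\cK(S_g)$, which is characteristic and hence normal: when $\cK(S_g)$ acts trivially on $V$, the kernel $N$ of $\text{Mod}(S_g)\to \mathrm{GL}(V)$ is a finite-index subgroup containing $\cK(S_g)$, so Theorem \ref{putmanstheorem} makes $H_1(N,\QQ)$ vanish, and reversing the Shapiro decomposition for $N$ yields $H^1(\text{Mod}(S_g),V) = 0$.

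The hard part is the remaining case, the finite-image representations $V$ on which $\cK(S_g)$ acts nontrivially; this is exactly the regime in which the conjecture is open. Deciding which such $V$ occur amounts to understanding the finite quotients of $\text{Mod}(S_g)$, i.e. the congruence subgroup problem for mapping class groups. If every finite quotient were known to be a congruence quotient, one could hope to express $V$ through representations of the arithmetic groups $\Sp_g(\ZZ/m\ZZ)$ and deduce the vanishing from known results on the rational cohomology of $\Sp_g(\ZZ)$ with coefficients in the algebraic representations $V(\lambda)$, which vanishes stably for nontrivial $\lambda$. Absent such structural control over the finite quotients, no reduction of this kind is available, and any attempt along these lines stalls at precisely this point.
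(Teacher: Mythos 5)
This statement is a conjecture in the paper: it is attributed to Ivanov and stated without proof, and the paper records only the known partial results (Hain's for $\Gamma \supset \cI(S_g)$ and Putman's for $\Gamma \supset \cK(S_g)$, the latter being Theorem \ref{putmanstheorem}). So there is no proof in the paper to compare against, and your refusal to claim a complete argument is the correct call. Your reductions are all valid: passing to the normal core and invoking the transfer is fine; Shapiro's lemma does convert the conjecture into the vanishing of $H^1(\text{Mod}(S_g),V)$ for every irreducible rational representation $V$ with finite image (and over $\QQ$ the vanishing of $H^1$ and of $H_1$ are equivalent here, all groups in sight being finitely generated); perfectness of $\text{Mod}(S_g)$ for $g\geq 3$ disposes of the trivial summand; and when $\cK(S_g)$ acts trivially on $V$, the kernel $N$ of $\text{Mod}(S_g)\rightarrow \mathrm{GL}(V)$ is a finite-index subgroup containing $\cK(S_g)$, so Theorem \ref{putmanstheorem} plus Shapiro applied to $N$ gives $H^1(\text{Mod}(S_g),V)=0$. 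Be aware, though, that this last step is a repackaging of Putman's theorem rather than an extension of it: the cases your framework settles coincide exactly with the cases the paper already cites as known.

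One substantive caveat about your closing paragraph: even a positive answer to the congruence subgroup problem for mapping class groups would not reduce the remaining representations to representations of $\Sp_g(\ZZ/m\ZZ)$. Congruence subgroups of $\text{Mod}(S_g)$ are defined via the action on the mod-$m$ homology of \emph{finite covers} of $S_g$, not merely via the level structures pulled back from $\Sp_g(\ZZ)$; finite quotients detected through the Torelli group (for instance via reductions of the Johnson homomorphisms) already fail to factor through any $\Sp_g(\ZZ/m\ZZ)$. Consequently the coefficient modules that would arise are homologies of finite covers, and the vanishing question becomes one about higher Prym representations --- this is the content of the Putman--Wieland reformulation of Ivanov's conjecture --- rather than about $H^1$ of $\Sp_g(\ZZ)$ with algebraic coefficients, where vanishing is indeed known. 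Your identification of where the argument stalls is accurate; just note that the obstruction sits deeper than the $\Sp$-reduction you sketch, which is why the case where $\cK(S_g)$ acts nontrivially remains open.
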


Hain subsequently showed \cite{hain94} that Ivanov's conjecture is true if $\cI(S_g) < \Gamma$. Putman \cite{putmannote} later strengthened this to include $\Gamma$ such that $\cK(S_g) < \Gamma$. A recent result of Ershov--He  \cite{ershovhe} implies the conjecture for subgroups containing a term of the lower central series of $\cI(S_g)$ in an explicit range of genera. 

Of course, the truth of Ivanov's full conjecture would imply that the Picard group of $\cM_g^H$ is finitely generated for all $g\geq 3$ and all finite $H < \text{Mod}(S_g)$.

\subsubsection{The Johnson filtration}\label{johnsonflltration} Fix a basepoint $x_0$ on $S_g$, and set $\pi = \pi_1(S_g,x_0)$. For each integer $k\geq 1$, let $\pi^{(k)}$ denote the $k$th term of the lower central series of $\pi$. 
The action of homeomorphisms on $S_g$ induces a homomorphism 
$$\text{Mod}(S_g)\rightarrow \text{Out}\left(\pi/\pi^{(k+1)}\right)$$
for each $k\geq 1$. We define $\cI(S_g)(k)$ to be the kernel of this homomorphism. This produces a filtration $\{\cI(S_g)(k)\}_{k\geq 0}$ of $\text{Mod}(S_g)$ known as the \emph{Johnson filtration}. We recover the Torelli group $\cI(S_g)$ by setting $k=1$. By a theorem of Johnson \cite{johnson2}, we have $\cK(S_g) = \cI(S_g)(2)$.

\subsubsection{The liftable mapping class group of a nilpotent cover}
 We now suppose that $g\geq 3$ and that $p: S_h\rightarrow S_g$ is an \emph{unbranched} covering with nilpotent deck group $H$. Then $p$ is determined by a surjective homomorphism $\varphi: \pi \rightarrow H$. If $H$ has nilpotency class $k$, i.e. $k$ is the smallest positive integer such that $H^{(k+1)} = 1$, then $\varphi$ factors through the nilpotent quotient $\pi/\pi^{(k+1)}$ to give a homomorphism $\overline{\varphi}: \pi/\pi^{(k+1)}\rightarrow H$.

\begin{proposition}\label{nilpotentcoveringlemma}
Suppose $p: S_h\rightarrow S_g$ is an unbranched nilpotent covering of nilpotency class $k$. Then $\emph{LMod}_p(S_g)$ contains $\cI(S_g)(k)$.
\end{proposition}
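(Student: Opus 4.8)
The plan is to combine the covering-space lifting criterion with the fact that $\cI(S_g)(k)$ acts on the nilpotent quotient $\pi/\pi^{(k+1)}$ through inner automorphisms. Write $K = \ker(\varphi)\le \pi$, so that $p: S_h\rightarrow S_g$ is the regular cover corresponding to $K$. The standard lifting criterion from covering space theory says that an orientation-preserving homeomorphism $f$ of $S_g$ lifts to a homeomorphism of $S_h$ if and only if the induced automorphism $f_*$ of $\pi$ satisfies $f_*(K) = K$. Since the cover is regular, $K$ is normal in $\pi$, so this condition is unchanged if $f_*$ is replaced by a conjugate; it therefore depends only on the mapping class of $f$ and is insensitive to the choice of basepoint. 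Thus $\text{LMod}_p(S_g) = \{[f]\in \text{Mod}(S_g) : f_*(K) = K\}$, and the task is to verify $f_*(K) = K$ for every $f\in \cI(S_g)(k)$.

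First I would record two structural facts about $K$. Because $\varphi$ factors as $\pi \rightarrow \pi/\pi^{(k+1)} \xrightarrow{\overline{\varphi}} H$, we have $\pi^{(k+1)}\subseteq K$ and $K/\pi^{(k+1)} = \ker(\overline{\varphi})$. As a term of the lower central series, $\pi^{(k+1)}$ is characteristic in $\pi$, so every $f_*$ preserves it and descends to an automorphism $\overline{f}_*$ of $\pi/\pi^{(k+1)}$. Since $\pi^{(k+1)}\subseteq K$ is fixed by $f_*$, and subgroups containing $\pi^{(k+1)}$ correspond bijectively to subgroups of $\pi/\pi^{(k+1)}$, the equation $f_*(K) = K$ is equivalent to $\overline{f}_*(\ker \overline{\varphi}) = \ker \overline{\varphi}$.

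Now I would invoke the hypothesis $f\in \cI(S_g)(k)$: by definition the image of $f$ in $\text{Out}(\pi/\pi^{(k+1)})$ is trivial, so $\overline{f}_*$ is an \emph{inner} automorphism of $\pi/\pi^{(k+1)}$. On the other hand, $\ker(\overline{\varphi})$ is a normal subgroup of $\pi/\pi^{(k+1)}$, being the kernel of a homomorphism. Inner automorphisms preserve every normal subgroup, so $\overline{f}_*(\ker \overline{\varphi}) = \ker \overline{\varphi}$, which by the previous paragraph gives $f_*(K) = K$. Hence $f$ lifts, and $\cI(S_g)(k)\subseteq \text{LMod}_p(S_g)$.

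The only genuinely delicate point is reconciling the two levels at which the hypothesis and conclusion live: the lifting criterion is naturally a statement about the honest automorphism $f_*$, whereas membership in $\cI(S_g)(k)$ only controls $f$ modulo inner automorphisms. The reason the argument closes cleanly is precisely that the subgroup to be preserved, $\ker(\overline{\varphi})$, is normal, so the inner-automorphism ambiguity is harmless; I would state the lifting criterion in its basepoint-free, conjugation-invariant form so that this matching is transparent.
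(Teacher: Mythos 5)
Your proof is correct, and its skeleton is the same as the paper's: both reduce lifting to the covering-space criterion for preserving $K = \ker(\varphi)$, and both exploit the fact that $\varphi$ kills $\pi^{(k+1)}$ (since $H^{(k+1)} = 1$), so that everything descends to the nilpotent quotient $\pi/\pi^{(k+1)}$. Where you genuinely diverge is in handling the Out-versus-Aut discrepancy, and your treatment is the more careful one. The paper isotopes $f$ to a basepoint-preserving representative $f'$ and then asserts outright that $f'_*(\gamma)\gamma^{-1}\in \pi^{(k+1)}$ for all $\gamma$, i.e.\ that $f'_*$ acts as the \emph{identity} on $\pi/\pi^{(k+1)}$; it then verifies directly that $\varphi(f'_*(\gamma)) = \varphi(\gamma_k)\varphi(\gamma) = \varphi(\gamma)$ for $\gamma \in K$. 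But membership in $\cI(S_g)(k)$ only guarantees triviality in $\text{Out}\left(\pi/\pi^{(k+1)}\right)$, so the paper's assertion holds only after adjusting $f'$ (or the choice of path/basepoint data) by an inner automorphism --- which is possible, since every inner automorphism of the quotient lifts to one of $\pi$, but this normalization is left implicit. You instead keep the inner ambiguity and observe that it is harmless because the subgroup to be preserved, $\ker(\overline{\varphi})$, is normal in $\pi/\pi^{(k+1)}$; this closes exactly the gap the paper glosses over, at the modest cost of invoking the correspondence between subgroups of $\pi$ containing $\pi^{(k+1)}$ and subgroups of the quotient. Your version is also slightly more robust: it proves the conjugation-invariant statement directly, rather than relying on a preferred representative, which is the right formulation given that $\cI(S_g)(k)$ is defined via $\text{Out}$.
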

\begin{proof}
By covering space theory, if a homeomorphism of $(S_g,x_0)$ preserves the kernel of $\varphi$, it lifts to a homeomorphism of $S_h$. 

Let $f$ be a homeomorphism of $S_g$ with the property that $[f]\in \cI(S_g)(k)$. It follows from \cite[Corollary 2]{kordekfiniteness} that we may isotope $f$ to a homeomorphism $f'$ such that $f'(x_0) = x_0$ and such that the induced automorphism $f'_*$ of $\pi$ satisfies $f_*'(\gamma)\gamma^{-1}\in \pi^{(k+1)}$ for all $\gamma\in \pi$. 

For each $\gamma\in \pi$, there exists some $\gamma_k\in \pi^{(k+1)}$ such that $f'_*(\gamma) = \gamma_k\cdot \gamma$. Suppose that $\varphi(\gamma) = 1$. Then we have 
\begin{align*}
\varphi \left( f'_*(\gamma)\right) = \varphi \left(\gamma_k\cdot \gamma\right) &= \varphi \left(\gamma_k \right)\cdot \varphi \left(\gamma\right)\\
&= \varphi \left(\gamma_k \right)\\
&=1
\end{align*}
since the terms of the lower central series are characteristic and $H^{(k+1)} = 1$ by assumption. Thus $f'$ lifts to a homeomorphism of $S_h$. Thus $[f'] = [f]\in \text{LMod}_p(S_g)$.
\end{proof}

\begin{corollary}
Suppose that $H < \text{Mod}(S_h)$ is a finite nilpotent group of nilpotency class $k\geq 2$ and set $S_h/H = S_g$. If $g\geq 8k-4$ and the covering $p:S_h\rightarrow S_g$ is unbranched, then we have $H_1(\text{Mod}_H(S_h),\QQ) = 0$. In particular, $\text{Pic}\ \cM_h^H$ is finitely generated.
\end{corollary}
\begin{proof}
It is well known that the $k$th term $\cI^{(k)}(S_g)$ of the lower central series of $\cI(S_g)$ is contained in $\cI(S_g)(k)$. Thus the liftable mapping class group of $p$ contains $\cI^{(k)}(S_g)$. Theorem 1.9 in \cite{ershovhe}, although stated for surfaces with boundary, implies quite directly, then, that the abelianization of $\text{LMod}_p(S_g)$ is finite provided $g\geq 8k-4$. It immediately follows that $H_1(\text{Mod}_H(S_h),\QQ) = 0$ and that $\text{Pic}\ \cM_g^H$ is finitely generated.
\end{proof}

\section{Curves of compact type}\label{curvesofcompacttypesection} 
The Deligne-Mumford compactification $\overline{\cM}_g$ of $\cM_g$ is a projective variety with finite quotient singularities. Its points parametrize isomorphism classes of stable genus $g$ curves with at worst nodal singularities. The boundary $\partial \cM_g =\overline{\cM}_g\setminus \cM_g$ is a divisor. There is an irreducible decomposition
$$\partial \cM_g = \Delta_{irr} \cup \bigcup_{j=1}^{\floor{g/2}}D_j.$$
The generic point of $\Delta_{irr}$ represents an irreducible curve with a single node, and the generic point of $D_j$ represents a reducible curve with two smooth components of genus $j$ and $g-j$ and a single node. 

\vspace{.1in}
Of principal interest to us are the \emph{curves of compact type}. This is a stable nodal curve $C$ all of whose irreducible components are smooth and whose dual graph is a tree. The generalized Jacobian of $C$ is a compact complex torus and is isomorphic to the product of the Jacobians of the smooth components. The moduli space of curves of compact type is the open subvariety 
$$\cM_g^c = \overline{\cM}_g\setminus \Delta_{irr} \subset \overline{\cM}_g.$$

\subsection{Level structures}
Assume that $g\geq 2$, and let $C$ be a genus $g$ curve of compact type. Assume that $C = C_1\cup \cdots \cup C_n$, where each $C_j$ is a smooth curve. Let $\omega_j$ denote the symplectic form on $H_1(C_j,\ZZ)$. Since $H_1(C,\ZZ) = \displaystyle \bigoplus H_1(C_j,\ZZ)$, we obtain a symplectic form $\omega$ on $H_1(C,\ZZ)$ by setting $\omega = \omega_1\oplus \cdots \oplus\omega_n$. This induces a symplectic form on $H_1(C,\ZZ/m\ZZ)$ for each $m$.

Now fix an integer $m\geq 0$. A \emph{mod $m$ homology framing} for $C$ is a symplectic basis $\cF = \{a_1,\ldots, a_g, b_1,\ldots b_g\}$ for $H_1(C,\ZZ/m\ZZ) \cong (\ZZ/m\ZZ)^{2g}$. A level $m$ structure on a curve of compact type $C$ is a choice of mod $m$ homology framing. Two curves $C_1,C_2$ of compact type with level $m$ structures $\cF_1, \cF_2$ are said to be isomorphic if there is an isomorphism $C_1\xrightarrow{\cong} C_2$ that carries $\cF_1$ to $\cF_2$.

\subsubsection{Torelli spaces}
Torelli space $\cT_g^c$ is the moduli space of curves of compact type equipped with a level $0$ structure, i.e. an integral homology framing. It is a complex manifold, but it is not biholomorphic to an algebraic variety. The open submanifold $\cT_g$ parametrizing smooth curves is the complement of a divisor with normal crossings; it can also be realized as the quotient $\cI(S_{g})\backslash \mathfrak{X}_g$.

The symplectic group $\Sp_g(\ZZ)$ acts properly and virtually freely on $\cT_{g}^c$ and $\cT_{g}$ via its action on framings. There is a natural holomorphic map $\cT_{g}^c\rightarrow \cM_{g}^c$ obtained by forgetting the framings. By \cite{hain2006} this induces natural identifications $\Sp_g(\ZZ)\backslash \cT_{g}^c\cong \cM_{g}^c$ and $\Sp_g(\ZZ)\backslash \cT_{g} \cong \cM_{g}$ which we use to endow both quotients with an algebraic structure.

\subsubsection{Finite level covers of $\cM_g^c$} 
When $m\geq 3$, the level $m$ subgroup $\Sp_g(\ZZ)[m]$ of $\Sp_g(\ZZ)$ is torsion-free. The quotient space $\Sp_g(\ZZ)[m]\backslash \cT_{g}^c$ is therefore smooth, and in fact furnishes a quasiprojective finite cover of $\cM_{g}^c$. 
For each $m\geq 1$, the moduli space $\cM_{g}^c[m]$ of genus $g$ curves of compact type with level $m$ structure can be identified with $\Sp_g(\ZZ)[m]\backslash \cT_{g}^c$; we shall regard it as a quasiprojective orbifold.  

Similarly, the moduli space $\cM_{g}[m]$ of smooth genus $g$ curves with level $m$ structure can be realized as the quasiprojective orbifold\footnote{Here we temporarily relax our requirement that a quasiprojective orbifold $(X,G)$ satisfies $\pi_1(X,*) = 1$.} 
$(\Sp_g(\ZZ)[m], \cT_{g})$; as noted above, it is also realizable as the quasiprojective orbifold $(\text{PMod}(S_{g})[m], \mathfrak{X}_{g})$. 

\vspace{.1in} From this descriptions given above, we see that the natural maps $\cM_{g}[m]\rightarrow \cM_{g}$ and $\cM_{g}^c[m]\rightarrow \cM_{g}^c$ obtained by forgetting the framings are finite Galois covers with Galois group $\Sp_g(\ZZ/m\ZZ)$.

\subsection{The Picard group of $\cM_{g}^c[m]$}
\vspace{.1in}
Using Johnson's work on the Torelli group, Hain \cite{hain94} showed that the Picard groups of the $\cM_{g}[m]$ are finitely generated, provided $g\geq 3$. Building on this work, Putman \cite{putmanpicard} computed the Picard groups of the $\cM_g[m]$ exactly, subject to some restrictions on the level $m$. 

\vspace{.1in}
Hain's result readily implies that $\text{Pic}\ \cM_g^c[m]$ is finitely generated. In this section, we compute the torsion subgroup of $\text{Pic}\ \cM_g^c[m]$ for $m\geq 3$. Since these results will not be used elsewhere in the paper, the uninterested reader may safely skip this section.

\vspace{.1in}
We will need to make use of the following result in our computation.
%
%
%

\begin{proposition}[Putman \cite{putmanpicard}, Sato \cite{sato}]
For each $g\geq 2$ and $m\geq 3$ we have
$$H_1(\Sp_g(\ZZ)[m],\ZZ)\cong
\left\{
\begin{array}{lr}
(\ZZ/m\ZZ)^{\oplus g(2g+1)}&\ m\ \text{odd} \\
(\ZZ/m\ZZ)^{\oplus g(2g-1)}\oplus (\ZZ/2m\ZZ)^{\oplus 2g}&\ m\ \text{even}
\end{array}
\right.
$$
\end{proposition}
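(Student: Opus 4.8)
The plan is to compute $H_1(\Gamma,\ZZ)$ for $\Gamma := \Sp_g(\ZZ)[m]$ directly, by combining a ``reduction mod $m^2$'' homomorphism with an explicit symplectic generating set, and then tracking a factor of two that survives only when $m$ is even. Write each $M\in\Gamma$ as $M = I + mN$ with $N\in M_{2g}(\ZZ)$, and let $J$ be the standard symplectic matrix. The condition $M^{T}JM=J$ becomes $N^{T}J + JN + m\,N^{T}JN = 0$, so its reduction $\bar N\in M_{2g}(\ZZ/m\ZZ)$ satisfies $\bar N^{T}J + J\bar N = 0$; that is, $\bar N$ lies in the symplectic Lie algebra $\mathfrak{sp}_{2g}(\ZZ/m\ZZ)$, which as an abelian group is $(\ZZ/m\ZZ)^{\oplus g(2g+1)}$.

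First I would produce the main surjection. Expanding $(I+mN_1)(I+mN_2)=I+m(N_1+N_2)+m^2N_1N_2$ shows that $M\mapsto\bar N$ is a homomorphism $r\colon\Gamma\to\mathfrak{sp}_{2g}(\ZZ/m\ZZ)$, and expanding a commutator of two such matrices gives $[\Gamma,\Gamma]\subseteq\Sp_g(\ZZ)[m^2]=\ker r$. Since $r$ is surjective (every Lie-algebra element lifts, by smoothness of the symplectic group scheme), it induces a surjection
$$H_1(\Gamma,\ZZ)\twoheadrightarrow \mathfrak{sp}_{2g}(\ZZ/m\ZZ)\cong(\ZZ/m\ZZ)^{\oplus g(2g+1)}.$$
To pin down the kernel I would use an explicit finite generating set of $\Gamma$ by elementary symplectic (Chevalley) generators $x_\alpha(m)$, one family for each root $\alpha$ of the type $C_g$ root system, together with the transvections $T^{(m)}_{v}\colon x\mapsto x+m\langle x,v\rangle v$; their $r$-images span $\mathfrak{sp}_{2g}(\ZZ/m\ZZ)$, so $H_1(\Gamma,\ZZ)$ is generated by the classes of these generators and the problem reduces to computing the order of each class.

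For $m$ odd this finishes quickly. The Chevalley commutator relations $[x_\alpha(m),x_\beta(m)]=\prod x_{i\alpha+j\beta}(c_{ij}m^2)$ express the generators $x_\gamma(m^2)$ of $\ker r=\Sp_g(\ZZ)[m^2]$ as commutators in $\Gamma$; the only obstruction is that the structure constant attached to a \emph{long} root $\gamma=\pm2e_i$ equals $\pm2$, but with $m$ odd the factor $2$ is invertible and can be divided out, so $[\Gamma,\Gamma]=\Sp_g(\ZZ)[m^2]$ and $r$ is an isomorphism on $H_1$, giving $(\ZZ/m\ZZ)^{\oplus g(2g+1)}$. The even case is where the genuine content lies: in type $C_g$ the $2g$ long roots $\pm2e_i$ are exactly the diagonal entries of the two symmetric off-diagonal blocks, i.e. the transvections $T^{(m)}_{a_i}$ and $T^{(m)}_{b_i}$ along the $2g$ symplectic basis vectors, and for these the commutator relations produce only $x_{\pm2e_i}(2m^2)$, never $x_{\pm2e_i}(m^2)$. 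Consequently each of these $2g$ classes has order $2m$ rather than $m$, while the $g(2g+1)-2g=g(2g-1)$ short-root and Cartan generators retain order $m$. Splitting off the $2g$ distinguished classes yields $(\ZZ/m\ZZ)^{\oplus g(2g-1)}\oplus(\ZZ/2m\ZZ)^{\oplus 2g}$; note the total number of cyclic factors, $g(2g-1)+2g=g(2g+1)$, is consistent with the surjection $r$.

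The main obstacle is the upper bound in the even case: showing that $H_1(\Gamma,\ZZ)$ is no larger than claimed, i.e. that the $2g$ long-root classes have order \emph{exactly} $2m$ and that no further $2$-torsion appears. I expect this to require an actual presentation of the symplectic congruence subgroup via the symplectic Steinberg relations, together with the solution of the congruence subgroup problem for $\Sp_g(\ZZ)$, or equivalently a filtration argument along $\Gamma\supseteq\Sp_g(\ZZ)[m^2]\supseteq\Sp_g(\ZZ)[m^3]\supseteq\cdots$ controlling the successive quotients by the Weyl-group action on $\mathfrak{sp}_{2g}(\ZZ/m\ZZ)$. The appearance of exactly $2g$ exceptional factors is natural from this viewpoint: they are indexed by the long roots of $C_g$, and the order-$2m$ phenomenon is the congruence-subgroup shadow of the factor-of-two structure constants that distinguish long roots from short roots in the symplectic (as opposed to the linear) case.
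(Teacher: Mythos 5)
First, a point of comparison: the paper does not prove this proposition at all --- it is imported verbatim from Putman \cite{putmanpicard} and Sato \cite{sato} --- so the only meaningful comparison is with those cited proofs, whose overall strategy (a symplectic version of the Lee--Szczarba computation for congruence subgroups of $\mathrm{SL}_n(\ZZ)$) your sketch does correctly mirror: the reduction homomorphism $r\colon \Sp_g(\ZZ)[m]\to\mathfrak{sp}_{2g}(\ZZ/m\ZZ)$, the identification of the $2g$ exceptional classes with the long roots $\pm 2e_i$ (equivalently, the transvections along symplectic basis vectors), and the source of the factor $2$ in the structure constants of type $C_g$ are all the right ingredients.

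However, there are two genuine gaps, and you have only flagged the second of them. First, your entire computation is conditioned on the claim that $\Gamma=\Sp_g(\ZZ)[m]$ is generated by the elementary Chevalley elements $x_\alpha(tm)$, and likewise that $\ker r=\Sp_g(\ZZ)[m^2]$ is generated by the $x_\alpha(tm^2)$. Neither statement is formal: principal congruence subgroups are in general only \emph{normally} generated by elementary matrices, and even that is a deep Mennicke/Bass--Milnor--Serre-type theorem tied to the congruence subgroup property; to pass from normal generation to the claim that $H_1(\Gamma,\ZZ)$ is generated by the elementary classes one must additionally control the conjugation action of $\Sp_g(\ZZ)$ on $H_1(\Gamma,\ZZ)$, which is where the Lee--Szczarba-style module arguments enter in the actual proofs. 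As written, both your odd and even cases rest on this unproven input, so even the odd case you describe as finishing ``quickly'' is incomplete. Second, in the even case your commutator computation establishes only that the long-root classes are killed by $2m$: the relation $[x_{e_i-e_j}(m),x_{e_i+e_j}(m)]=x_{2e_i}(\pm 2m^2)$ together with the fact that the remaining commutators reaching $2e_i$ contribute parameters divisible by $m^3$, and $\gcd(2m^2,m^3)=2m^2$ for $m$ even, gives an upper bound on the order. It does not show the order is \emph{exactly} $2m$, i.e.\ that $x_{2e_i}(m^2)\notin[\Gamma,\Gamma]$, nor that the resulting extension of $\mathfrak{sp}_{2g}(\ZZ/m\ZZ)$ by $(\ZZ/2\ZZ)^{\oplus 2g}$ splits as the stated direct sum. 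Establishing this lower bound requires constructing an explicit detecting invariant --- a homomorphism $\Sp_g(\ZZ)[m]\to\ZZ/2m\ZZ$ refining $r$ on the diagonal entries of the symmetric off-diagonal blocks --- and this construction is precisely the technical heart of the Putman and Sato arguments; deferring it to ``Steinberg presentations plus CSP'' is deferring the theorem itself. In sum, your proposal is a correct roadmap reproducing the shape of the known proof, but as it stands it proves only a surjection $H_1(\Sp_g(\ZZ)[m],\ZZ)\twoheadrightarrow(\ZZ/m\ZZ)^{\oplus g(2g+1)}$ together with upper bounds on the orders of certain distinguished classes.
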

The following is an adaptation of Hain's result to the moduli spaces $\cM_{g}^c[m]$ of curves of compact type. 
\begin{proposition}
For all $g\geq 3$ and $m\geq 1$, $\text{Pic}\ \cM_{g}^c[m]$ is finitely generated.
\end{proposition}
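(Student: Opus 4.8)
The plan is to deduce this from Hain's theorem that $\text{Pic}\ \cM_g[m]$ is finitely generated for $g\geq 3$, by comparing $\cM_g^c[m]$ with its open subvariety of smooth curves through a localization (excision) sequence for Picard groups. Since $\cM_g = \cM_g^c\setminus \bigcup_{j=1}^{\floor{g/2}} D_j$, the smooth locus sits inside the compact-type moduli space as the complement of the boundary divisors $D_j$, and removing a divisor from a smooth variety changes the Picard group in a controlled, finitely generated way.

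First I would treat the case $m\geq 3$, where $\Sp_g(\ZZ)[m]$ is torsion-free and hence acts freely on $\cT_g^c$. Then $Y:=\Sp_g(\ZZ)[m]\backslash \cT_g^c$ is a smooth quasiprojective variety with $\text{Pic}\ \cM_g^c[m]=\text{Pic}\ Y$, and its open subvariety $U:=\Sp_g(\ZZ)[m]\backslash \cT_g$ is exactly $\cM_g[m]$. The complement $\partial=Y\setminus U$ is the preimage of $\bigcup_j D_j$, a divisor whose irreducible components are finite in number because $Y\to \cM_g^c$ is a finite cover (with group $\Sp_g(\ZZ/m\ZZ)$) and there are only $\floor{g/2}$ boundary divisors $D_j$ downstairs. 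Since $\text{Pic}=A^1$ for the smooth $Y$, the standard localization sequence for Chow groups of divisors yields a right-exact sequence
$$\bigoplus_{i}\ZZ\,[\partial_i]\longrightarrow \text{Pic}\ Y\longrightarrow \text{Pic}\ U\longrightarrow 0.$$
Here the left-hand term is finitely generated (finitely many boundary components $\partial_i$) and $\text{Pic}\ U=\text{Pic}\ \cM_g[m]$ is finitely generated by Hain's theorem; as $\text{Pic}\ Y$ is an extension of a finitely generated group by a finitely generated group, it is finitely generated, establishing the claim for $m\geq 3$.

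It remains to reduce the cases $m=1,2$ to the above. For such $m$ I would choose a multiple $m'$ of $m$ with $m'\geq 3$, so that $\Sp_g(\ZZ)[m']\subset \Sp_g(\ZZ)[m]$ and the coarse space $\Sp_g(\ZZ)[m]\backslash \cT_g^c$ is the quotient of the smooth variety $Y'=\Sp_g(\ZZ)[m']\backslash \cT_g^c$ by the finite group $\Gamma=\Sp_g(\ZZ)[m]/\Sp_g(\ZZ)[m']$. Using the isomorphism $A_k(Y')^{\Gamma}_{\QQ}\cong A_k(\Gamma\backslash Y')_{\QQ}$ of \cite{fulton} together with the fact that $\text{Pic}\otimes\QQ=A^1\otimes\QQ$ on a normal variety with finite quotient singularities, I would identify $\text{Pic}(\Sp_g(\ZZ)[m]\backslash \cT_g^c)\otimes\QQ$ with a subspace of the finite-dimensional space $\text{Pic}\ Y'\otimes\QQ$; it is therefore finite-dimensional, and Lemma \ref{picardlemma} then gives that $\text{Pic}\ \cM_g^c[m]$ is finitely generated.

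The main obstacle is not any single deep input but the careful geometric bookkeeping: verifying that $\partial=Y\setminus U$ is genuinely a divisor whose components correspond to the finitely many boundary divisors $D_j$ and their finitely many lifts, and confirming that the localization sequence is being applied on an honest smooth variety so that $\text{Pic}=A^1$. The passage between the orbifold Picard group and the Picard group of the coarse quotient for the torsion-containing levels $m=1,2$ is the other point requiring care, and is precisely where Lemma \ref{picardlemma} and the rational Chow-group comparison of \cite{fulton} do the work.
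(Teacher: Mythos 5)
Your proposal is correct, and for $m\geq 3$ it is essentially the paper's own argument: both proofs use the right-exact localization sequence
$\bigoplus_i \ZZ \rightarrow \text{Pic}\ \cM_g^c[m]\rightarrow \text{Pic}\ \cM_g[m]\rightarrow 0$,
with the direct sum over the (finitely many) components of the boundary divisor, together with Hain's finite generation of $\text{Pic}\ \cM_g[m]$; your extra bookkeeping (torsion-freeness of $\Sp_g(\ZZ)[m]$, smoothness of $\Sp_g(\ZZ)[m]\backslash \cT_g^c$, finiteness of the set of boundary components via the finite cover of $\cM_g^c$) is exactly what the paper leaves implicit. Where you genuinely diverge is the reduction for $m=1,2$. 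The paper passes to a level-$m'$ cover with $m'\geq 3$, asserts $H^1_{\Gamma}(\cM_g^c[m'],\QQ)=H^1(\cM_g^c[m'],\QQ)^{\Gamma}=0$, and concludes via Hain's vanishing criterion (Theorem \ref{hainpic0theorem}), so that $\text{Pic}\ \cM_g^c[m]$ injects into the finitely generated group $H^2$ of the relevant orbifold fundamental group. You instead stay entirely within rational Chow theory: Fulton's isomorphism $A_k(Y')_{\QQ}^{\Gamma}\cong A_k(\Gamma\backslash Y')_{\QQ}$, the $\QQ$-Cartier property on the finite-quotient coarse space, and Lemma \ref{picardlemma} to pass back to the orbifold Picard group. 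Both routes work, but yours has a small advantage in rigor: the paper's claim that $H^1(\cM_g^c[m'],\QQ)=0$ follows "from the preceding paragraph" is not immediate, since finite generation of a Picard group kills only $\text{Gr}^W_1H^1$ (equivalently $\text{Pic}^0$), not all of $H^1_{\QQ}$; that vanishing really comes from the finiteness of $H_1(\cM_g^c[m'],\ZZ)$ established later in the same section. Your Chow-theoretic reduction needs no cohomological vanishing at all, at the modest cost of invoking Lemma \ref{picardlemma} for an orbifold $(\cT_g^c,\Sp_g(\ZZ)[m])$ whose underlying space is not simply connected --- a relaxation the paper itself adopts (see its footnote), and one that is harmless here because the proof of Lemma \ref{picardlemma} uses only the finite cover $Y'\rightarrow \Gamma\backslash Y'$, not simple connectivity.
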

\begin{proof}
Assume first that $m\geq 3$. The required result can be proved in many ways, all of which invoke the fact that the complement of $\cM_{g}[m]$ in $\cM_{g}^c[m]$ is a divisor $D$. The most direct way is to first observe that there is an exact sequence
$$\bigoplus \ZZ \rightarrow \text{Pic}\ \cM_{g}^c[m]\rightarrow  \text{Pic}\ \cM_{g}[m]\rightarrow 0$$
where the direct sum runs over the set of components of $D$. Since this exhibits $\text{Pic}\ \cM_{g}^c[m]$ as an extension of a finitely generated group by a finitely generated group, it is also finitely generated.

To handle the cases $m=1,2$, observe that, in either case, we can choose $m'\geq 3$ such that $\Sp_g(\ZZ)[m'] <\Sp_g(\ZZ)[m]$. Then $\cM_{g}^c[m']$  is a regular quasiprojective finite cover of $\cM_{g}^c[m]$. We now set
$\Gamma = \Sp_g(\ZZ)[m]/\Sp_g(\ZZ)[m']$. The results of the preceding paragraph then imply that 
$$H^1_{\Gamma}(\cM_{g}^c[m'],\QQ) = H^1(\cM_{g}^c[m'],\QQ)^{\Gamma} = 0.$$
Thus $\text{Pic}\ \cM_{g}^c[m]$ is finitely generated by Theorem \ref{hainpic0theorem}.
\end{proof}


Since $\cT_{g}$ is the complement in $\cT_{g}^c$ of a normal crossings divisor, a monodromy argument shows that $\cT_{g}^c$ has fundamental group $\cI(S_g)/\cK(S_g)$ (see, for example, Hain \cite{hain2006}). Assume now that $m\geq 3$. At the level of analytic spaces, $\cM_{g}^c[m] = \Sp_g(\ZZ)[m]\backslash \cT_{g}^c$. Covering space theory shows that there is an extension
$$1\rightarrow \cI(S_g)/\cK(S_g)\rightarrow \pi_1(\cM_{g}^c[m],*)\rightarrow \Sp_g(\ZZ)[m]\rightarrow 1.$$
The action of $\Sp_g(\ZZ)[m]$ on $\cI(S_g)/\cK(S_g)$ is induced by the usual action of $\Sp_g(\ZZ)$ on $H_1(\cI(S_g),\ZZ)$. The 5-term exact sequence of rational homology for this extension has a segment
\begin{equation}\label{boundaryexactsequence}
\left(\cI(S_g)/\cK(S_g)\right)_{\Sp_g(\ZZ)[m]}\rightarrow H_1(\pi_1(\cM_{g}^c[m],*), \ZZ)\rightarrow H_1(\Sp_g(\ZZ)[m],\ZZ)\rightarrow 0.
\end{equation}
Work of Johnson \cite{johnson2} implies that there is a $\text{Mod}(S_g)$-equivariant isomorphism
$$\cI(S_g)/\cK(S_g)\rightarrow \Lambda^3V/\theta\wedge V$$
where $V = H_1(S_g,\ZZ)$ and $\theta\in \Lambda^2V$ is the standard symplectic form.
For each integer $m\geq 1$, define $V_m = H_1(S_g,\ZZ/m\ZZ)$. The following result allows us to compute the leftmost term of (\ref{boundaryexactsequence}) in terms of $V_m$. 

\vspace{.1in}
In what follows, we let $\Lambda^3_0V_m =\Lambda^3V_m/\theta\wedge V_m$.
\begin{proposition}[Putman \cite{putmanpicard}]
For all $m\geq 1$ and $g\geq 3$, we have isomorphisms 
$$(\Lambda^3_0V)_{\Sp_g(\ZZ)[m]}\cong \Lambda^3_0V_m .$$
\end{proposition}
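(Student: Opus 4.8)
Write $\Gamma = \Sp_g(\ZZ)[m]$ and let $I_\Gamma \subset \ZZ[\Gamma]$ be the augmentation ideal, so that for a $\Gamma$-module $L$ the coinvariants are $L_\Gamma = L/I_\Gamma L$. The plan is to reduce the statement to the assertion that $I_\Gamma$ acts on the relevant modules exactly as multiplication by $m$. Since taking coinvariants is right exact, applying $(-)_\Gamma$ to the defining sequence $V \xrightarrow{\theta\wedge} \Lambda^3 V \to \Lambda^3_0 V \to 0$ yields a right-exact sequence $V_\Gamma \xrightarrow{\theta\wedge} (\Lambda^3 V)_\Gamma \to (\Lambda^3_0 V)_\Gamma \to 0$, so that $(\Lambda^3_0 V)_\Gamma$ is the cokernel of the induced map $V_\Gamma \to (\Lambda^3 V)_\Gamma$. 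In parallel, reducing mod $m$ is right exact, so $\Lambda^3_0 V_m = \Lambda^3 V_m / (\theta\wedge V_m)$ is the cokernel of $\theta\wedge\colon V_m \to \Lambda^3 V_m$. The first step is therefore to identify $V_\Gamma \cong V_m$ and $(\Lambda^3 V)_\Gamma$ with $\Lambda^3 V_m$ (or, more precisely, to produce compatible surjections and track exactly how they fail to be isomorphisms on the trace part), and then to conclude by comparing cokernels and invoking naturality of the wedge map $\theta\wedge(-)$.

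\textbf{Main tools.} For the easy inclusion, note that every $\sigma\in\Gamma$ satisfies $\sigma\equiv I \pmod m$, whence $(\sigma-1)x\in mV$ for all $x$; extending diagonally to $\Lambda^3 V$ gives $I_\Gamma\Lambda^3 V \subseteq m\Lambda^3 V$, so there is automatically a surjection $(\Lambda^3 V)_\Gamma \twoheadrightarrow \Lambda^3 V/m$. The substance is the reverse generation. The key explicit elements are the $m$-th powers of symplectic transvections $T_v^m\colon x\mapsto x + m\langle x,v\rangle v$, which lie in $\Gamma$ and, fixing a symplectic basis $e_1,\dots,e_g,f_1,\dots,f_g$, act on decomposables by the clean formula
$$(T_a^m-1)(x\wedge y\wedge z) = m\big[\langle x,a\rangle\, a\wedge y\wedge z + \langle y,a\rangle\, x\wedge a\wedge z + \langle z,a\rangle\, x\wedge y\wedge a\big],$$
with no higher-order terms in $m$ since any summand carrying two factors of $a$ vanishes. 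Choosing $a$ to be the symplectic dual of one factor of a basis wedge whose three factors lie in distinct hyperbolic pairs shows directly that $m$ times every such ``type (a)'' wedge lies in $I_\Gamma\Lambda^3 V$. For the remaining wedges I would bring in two further inputs: the elementary symplectic congruence elements mixing two hyperbolic pairs (for instance $\sigma\colon e_1\mapsto e_1+me_2,\ f_2\mapsto f_2-mf_1$, fixing the rest, which one checks preserves the form and is $\equiv I \pmod m$), whose action produces the cross terms coupling pairs; and the $\Sp$-invariance of $\theta$, which gives $(\sigma-1)(\theta\wedge v) = \theta\wedge(\sigma-1)v$ and hence, using $mV = I_\Gamma V$, the containment $m(\theta\wedge V)\subseteq I_\Gamma\Lambda^3 V$ controlling the trace direction.

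\textbf{The main obstacle.} The hard part is the integral structure of the ``type (b)'' wedges $e_i\wedge f_i\wedge w$, i.e.\ the trace part of $\Lambda^3 V$. The transvection and mixing elements above produce $m$ times the \emph{differences} $e_i\wedge f_i\wedge w - e_j\wedge f_j\wedge w$, and combining these with the relation $\theta\wedge w = \sum_i e_i\wedge f_i\wedge w$ only yields a multiple such as $(g-1)m\,(e_1\wedge f_1\wedge w)$ rather than $m\,(e_1\wedge f_1\wedge w)$ itself. This is not a defect of the method but precisely the phenomenon the statement encodes: the map $\theta\wedge\colon V_m \to \Lambda^3 V_m$ need not be injective over $\ZZ/m$, so $\Lambda^3_0 V_m$ differs from the naive reduction $\Lambda^3_0 V\otimes\ZZ/m$ exactly on this trace summand. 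The crux, then, is to show that after passing to the cokernel in the right-exact sequence above these trace subtleties cancel against the image of $V_\Gamma$, so that $(\Lambda^3_0 V)_\Gamma$ matches $\Lambda^3_0 V_m$ on the nose. I expect this to require (i) assembling enough congruence elements to determine all augmentation relations among the type (b) wedges, and (ii) a careful order comparison, using that $(\Lambda^3_0 V)_\Gamma$ is finite (its rationalization vanishes because $\Lambda^3_0 V\otimes\QQ$ is the irreducible $\Sp$-representation $V(\lambda_3)$, which has no coinvariants) together with the naturality of $\theta\wedge(-)$ to pin down the cokernel. Verifying that the resulting finite group has the same order as $\Lambda^3_0 V_m$ is where the real work lies.
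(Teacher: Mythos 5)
The paper does not actually prove this proposition; it is quoted from Putman \cite{putmanpicard}, whose proof is precisely the transvection calculus you set up. Judged on its own terms, your attempt has a genuine gap: you explicitly stop short of the conclusion (``where the real work lies''), and the obstacle you defer is illusory because your diagnosis of it is wrong. The ``type (b)'' wedges $e_i\wedge f_i\wedge w$ are reachable directly from your own displayed formula: take $a = w$ and apply $T_w^m$ to the auxiliary wedge $e_i\wedge f_i\wedge w'$, where $w'$ is the symplectic partner of the basis vector $w$. Since $\langle e_i, w\rangle = \langle f_i, w\rangle = 0$ (the pair $\{e_i,f_i\}$ pairs trivially with the other hyperbolic pairs), the first two terms of your formula vanish and $(T_w^m-1)(e_i\wedge f_i\wedge w') = \pm m\, e_i\wedge f_i\wedge w$. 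So there is no ``differences only'' phenomenon and no stray factor of $g-1$ or $g$. More generally, any wedge $x\wedge y\wedge z$ of three distinct basis vectors contains at most one hyperbolic pair, so some factor, say $z$, has its partner $z'$ outside $\{x,y\}$, and the same move applied to $x\wedge y\wedge z'$ gives $m\,x\wedge y\wedge z\in I_\Gamma\Lambda^3 V$. Combined with your easy inclusion $I_\Gamma\Lambda^3 V\subseteq m\Lambda^3 V$, this yields $I_\Gamma\Lambda^3 V = m\Lambda^3 V$, hence $(\Lambda^3 V)_\Gamma\cong \Lambda^3 V_m$, and your right-exactness frame then finishes in one line: applying $(-)_\Gamma$ to $V\xrightarrow{\theta\wedge}\Lambda^3 V\to\Lambda^3_0V\to 0$ and using $V_\Gamma\cong V_m$ identifies $(\Lambda^3_0 V)_\Gamma$ with $\operatorname{coker}\bigl(\theta\wedge\colon V_m\to\Lambda^3 V_m\bigr) = \Lambda^3_0 V_m$.

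Your second claim, that $\theta\wedge\colon V_m\to\Lambda^3 V_m$ ``need not be injective'' so that $\Lambda^3_0 V_m$ differs from $\Lambda^3_0 V\otimes\ZZ/m\ZZ$, is also false for $g\geq 2$: one has $\theta\wedge b = \sum_{i\,:\, b\notin\{e_i,f_i\}} e_i\wedge f_i\wedge b$, a sum of distinct basis wedges with coefficients $\pm 1$, and the supports of $\theta\wedge b$ for distinct basis vectors $b$ are pairwise disjoint (a wedge of three basis vectors contains at most one hyperbolic pair, so the third factor $b$ is determined by the wedge). Hence $\{\theta\wedge b\}$ extends to a $\ZZ$-basis of $\Lambda^3 V$, $\theta\wedge V$ is a direct summand, injectivity survives reduction mod $m$, and $\Lambda^3_0 V$ is free. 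Consequently your fallback plan---a finiteness-plus-order comparison for $(\Lambda^3_0 V)_\Gamma$---is both unnecessary and, as sketched, unworkable, since you provide no mechanism for computing that order other than the generation argument you declared incomplete. The fix is not a new idea but the correct application of the tool you already introduced.
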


\subsubsection{The relative Johnson homomorphisms}
For $g\geq 3$, Broaddus--Farb--Putman \cite{broaddusfarbputman} and Sato \cite{sato} have constructed ``relative Johnson homomorphisms"
$$\tau_m: \text{Mod}(S_g)[m]\rightarrow \Lambda^3_0V_m. $$
These homomorphisms generalize the classical Johnson homomorphisms first considered by Johnson in \cite{johnson2}. In particular, $\tau_m$ vanishes on $\cK(S_g)$. 

\vspace{.1in}
Observe that for each $m\geq 3$ there is an exact sequence
\begin{equation}\label{H1sequence}
\Lambda^3_0V_m \rightarrow H_1(\cM_{g}^c[m],\ZZ)\rightarrow H_1(\Sp_g(\ZZ)[m],\ZZ)\rightarrow 0.
\end{equation}
Covering space theory shows that for $m\geq 3$ there are isomorphisms 
$$\pi_1(\cM_{g}^c[m],*)\cong \text{PMod}_{g}[m]/\cK(S_g).$$ By imitating the arguments in \cite{putmanpicard, sato}, it is readily demonstrated that the leftmost map in (\ref{H1sequence}) is injective and is split by the relative Johnson homomorphism. Thus we have proved the following. 
\begin{proposition}
For each $g\geq 3$, $m\geq 3$ there is an isomorphism
$$H_1(\cM_{g}^c[m],\ZZ) \cong \Lambda^3_0V_m\oplus H_1(\Sp_g(\ZZ)[m],\ZZ).$$
\end{proposition}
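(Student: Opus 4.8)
The plan is to recognize the sequence (\ref{H1sequence}) as a short exact sequence that splits, with the splitting furnished by the relative Johnson homomorphism $\tau_m$, and then read off the direct sum decomposition.

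First I would record how (\ref{H1sequence}) arises and identify its middle term with $H_1(\cM_g^c[m],\ZZ)$. Since the first homology of a connected space is the abelianization of its fundamental group, the identification $\pi_1(\cM_g^c[m],*)\cong \text{PMod}(S_g)[m]/\cK(S_g)$ lets me replace $H_1(\cM_g^c[m],\ZZ)$ by $H_1(\pi_1(\cM_g^c[m],*),\ZZ)$. The five-term homology sequence of the extension $1\to \cI(S_g)/\cK(S_g)\to \pi_1(\cM_g^c[m],*)\to \Sp_g(\ZZ)[m]\to 1$, combined with Johnson's isomorphism $\cI(S_g)/\cK(S_g)\cong \Lambda^3_0V$ \cite{johnson2} and Putman's coinvariants computation $(\Lambda^3_0V)_{\Sp_g(\ZZ)[m]}\cong \Lambda^3_0V_m$ \cite{putmanpicard}, yields exactly the three-term exact sequence (\ref{H1sequence}).

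Next I would construct the splitting. The relative Johnson homomorphism $\tau_m\colon \text{Mod}(S_g)[m]\to \Lambda^3_0V_m$ of \cite{broaddusfarbputman, sato} restricts to $\text{PMod}(S_g)[m]$ and vanishes on $\cK(S_g)$; it therefore descends to a homomorphism on $\pi_1(\cM_g^c[m],*)$, and since the target is abelian it factors through $H_1(\cM_g^c[m],\ZZ)$. I claim the resulting map $H_1(\cM_g^c[m],\ZZ)\to \Lambda^3_0V_m$ is a retraction of the left-hand inclusion in (\ref{H1sequence}). This reduces to checking that the composite $\Lambda^3_0V_m \cong (\cI(S_g)/\cK(S_g))_{\Sp_g(\ZZ)[m]}\to H_1(\cM_g^c[m],\ZZ)\xrightarrow{\tau_m}\Lambda^3_0V_m$ is the identity, which holds because the restriction of $\tau_m$ to $\cI(S_g)/\cK(S_g)$ is the mod $m$ reduction of the classical Johnson homomorphism, i.e. the canonical projection $\Lambda^3_0V\to \Lambda^3_0V_m$ inducing the identity on coinvariants. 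In particular the left-hand map of (\ref{H1sequence}) is injective, the sequence is short exact, and the retraction splits it to give $H_1(\cM_g^c[m],\ZZ)\cong \Lambda^3_0V_m\oplus H_1(\Sp_g(\ZZ)[m],\ZZ)$.

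The main obstacle is the last verification: establishing that $\tau_m$ genuinely splits (\ref{H1sequence}), which amounts to the compatibility of the relative Johnson homomorphism with the classical one under reduction modulo $m$, so that the composite above is the identity on $\Lambda^3_0V_m$. This is precisely the content that must be extracted from the constructions in \cite{broaddusfarbputman} and \cite{sato}; everything else, namely the five-term sequence, Johnson's isomorphism, and the coinvariants computation, is assembled from results already in hand. As the excerpt indicates, this step is carried out by imitating the arguments in \cite{putmanpicard, sato}.
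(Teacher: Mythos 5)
Your proposal is correct and follows essentially the same route as the paper: the paper likewise obtains (\ref{H1sequence}) from the five-term sequence of the extension $1\to \cI(S_g)/\cK(S_g)\to \pi_1(\cM_g^c[m],*)\to \Sp_g(\ZZ)[m]\to 1$ together with Johnson's isomorphism and Putman's coinvariants computation, and then appeals to the arguments of \cite{putmanpicard, sato} to conclude that the leftmost map is injective and split by the relative Johnson homomorphism. Your explicit check that the composite $\Lambda^3_0V_m\to H_1(\cM_g^c[m],\ZZ)\xrightarrow{\tau_m}\Lambda^3_0V_m$ is the identity (via the compatibility of $\tau_m$ with the classical Johnson homomorphism under reduction mod $m$) is precisely the content the paper defers to those references.
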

Since this shows that $H_1(\cM_{g}^c[m],\ZZ)$ is finite, we are able to immediately deduce the following result.  
\begin{corollary}
For each $g\geq 3$, $m\geq 3$, the torsion subgroup of $\emph{Pic}\ \cM_{g}^c[m]$ is isomorphic to $\Lambda^3_0V_m\oplus H_1(\emph{\text{Sp}}_g(\ZZ)[m],\ZZ)$.
\end{corollary}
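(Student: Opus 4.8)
The plan is to combine the two preceding results---the isomorphism $H_1(\cM_g^c[m],\ZZ)\cong \Lambda^3_0V_m\oplus H_1(\Sp_g(\ZZ)[m],\ZZ)$ and the general orbifold machinery developed in the Preliminaries---to pin down the torsion subgroup of $\emph{Pic}\ \cM_g^c[m]$. The key observation is that, for $m\geq 3$, the group $\Sp_g(\ZZ)[m]$ is torsion-free, so $\cM_g^c[m] = \Sp_g(\ZZ)[m]\backslash \cT_g^c$ is itself a smooth quasiprojective variety (not merely an orbifold with nontrivial isotropy), and hence its Picard group admits the usual first Chern class homomorphism $c_1\colon \emph{Pic}\ \cM_g^c[m]\rightarrow H^2(\cM_g^c[m],\ZZ)$.

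First I would record that $H_1(\cM_g^c[m],\ZZ)$ is finite: the preceding proposition exhibits it as $\Lambda^3_0V_m\oplus H_1(\Sp_g(\ZZ)[m],\ZZ)$, and both summands are finite---$\Lambda^3_0V_m$ is a subquotient of $\Lambda^3 (\ZZ/m\ZZ)^{2g}$, and the finiteness of $H_1(\Sp_g(\ZZ)[m],\ZZ)$ is exactly the content of the Putman--Sato computation quoted above. Next I would invoke the structure of the Picard group of a smooth quasiprojective variety: $\emph{Pic}\ \cM_g^c[m]$ is an extension of a finitely generated group by the connected divisible group $\emph{Pic}^0$, so its torsion subgroup is detected entirely by $c_1$. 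By the Universal Coefficients Theorem, the torsion subgroup of $H^2(\cM_g^c[m],\ZZ)$ is isomorphic to the torsion subgroup of $H_1(\cM_g^c[m],\ZZ)$, which we have just seen equals the whole group.

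The two facts that do the real work are Putman's theorem (Theorem \ref{putmanpicardtheorem}) that the image of $c_1$ contains the \emph{entire} torsion subgroup of $H^2$, together with the observation that these torsion classes are precisely the Chern classes of flat (finite-order) line bundles and hence lie in the kernel of the map to $\emph{Pic}/\emph{Pic}^0$. Concretely, I would argue that the torsion subgroup of $\emph{Pic}\ \cM_g^c[m]$ maps isomorphically onto the torsion subgroup of $H^2(\cM_g^c[m],\ZZ)$ under $c_1$: surjectivity is Theorem \ref{putmanpicardtheorem}, and injectivity on torsion holds because any torsion line bundle in $\ker c_1$ would lie in the divisible group $\emph{Pic}^0$, which is torsion-free. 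Combining this isomorphism with the identifications of the previous two paragraphs yields
$$
\text{torsion}(\emph{Pic}\ \cM_g^c[m]) \cong \text{torsion}\,H^2(\cM_g^c[m],\ZZ)\cong H_1(\cM_g^c[m],\ZZ)\cong \Lambda^3_0V_m\oplus H_1(\Sp_g(\ZZ)[m],\ZZ).
$$

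The main obstacle, as I see it, is not any deep new input but rather the careful bookkeeping needed to make the chain of identifications rigorous in the orbifold-versus-variety setting. Since $m\geq 3$ forces $\Sp_g(\ZZ)[m]$ to be torsion-free, one must confirm that $\cM_g^c[m]$ genuinely has no orbifold structure to worry about---that the relevant $\emph{Pic}$ is the ordinary Picard group of a smooth variety rather than an equivariant Picard group---so that the classical Hodge-theoretic description of $\emph{Pic}^0$ as a complex torus (and in particular its torsion-freeness) applies directly. Once that point is settled, the argument is a formal consequence of the results already assembled, and the corollary follows immediately.
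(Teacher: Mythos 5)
Your overall strategy matches the paper's: combine the preceding proposition identifying $H_1(\cM_g^c[m],\ZZ)\cong \Lambda^3_0V_m\oplus H_1(\Sp_g(\ZZ)[m],\ZZ)$ with Putman's theorem that the image of $c_1$ contains the full torsion subgroup of $H^2$, and with the Universal Coefficients identification of that torsion subgroup with the torsion of $H_1$. However, your justification of injectivity of $c_1$ on torsion contains a genuine error: you claim that any torsion class in $\ker c_1$ must die because ``the divisible group $\Pic^0$ is torsion-free.'' Divisible groups are not torsion-free in general ($\QQ/\ZZ$ is divisible and all torsion), and neither is $\Pic^0$: for a smooth projective curve of genus $\geq 1$, $\Pic^0$ is a complex torus containing the $n$-torsion points $(\ZZ/n\ZZ)^{2g}$ for every $n$, and all of these lie in $\ker c_1$. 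So the general principle you invoke is false, and as written your argument does not rule out torsion line bundles with vanishing Chern class.

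The repair is short and is exactly what the paper does implicitly: you have already recorded that $H_1(\cM_g^c[m],\ZZ)$ is finite, and since $H^1(X,\ZZ)\cong \mathrm{Hom}(H_1(X,\ZZ),\ZZ)$ this forces $H^1(\cM_g^c[m],\ZZ)=0$; Theorem \ref{hainpic0theorem} (or, in your smooth-variety formulation, the standard Hodge-theoretic fact that $\Pic^0(Y)=0$ if and only if $H^1(Y,\ZZ)=0$) then gives $\Pic^0(\cM_g^c[m])=0$, so $c_1$ is injective outright --- not merely on torsion --- and the rest of your chain of identifications (surjectivity onto torsion via Theorem \ref{putmanpicardtheorem}, then Universal Coefficients) goes through verbatim. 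Your observation that $\Sp_g(\ZZ)[m]$ is torsion-free for $m\geq 3$, so that $\cM_g^c[m]$ is an honest smooth quasiprojective variety and the ordinary Picard group suffices, is correct and consistent with the paper's setup; the only defect is the appeal to torsion-freeness of $\Pic^0$ where vanishing of $\Pic^0$ is what is both needed and available.
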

\section{Hyperelliptic curves}\label{hyperellipticsection}
Let $\cM_g^{hyp}[m]$ denote the preimage of $\cHH_g$ in $\cM_g[m]$ under the Galois covering 
$$\cM_g[m]\rightarrow \cM_g.$$ When $m$ is odd $\cM_g^{hyp}[m]$ consists of a single component. When $m$ is even, the number of components of $\cM_g^{hyp}[m]$ is equal to
$$
\frac{2^{g^2}\prod_{k=1}^g 2^{2k} -1}{(2g+2)!}.
$$

The set of irreducible components is acted upon transitively by the Galois group $\Sp_g(\ZZ/m\ZZ)$. We now fix a component of $\cM_g^{hyp}[m]$ and denote it by $\cHH_g[m]$. 

\subsection{Hyperelliptic curves with level structures}
Since $\cHH_g$ is a normal variety and the canonical holomorphic map $\cHH_g[m]\rightarrow \cHH_g$ is proper with finite fibers, the Generalized Riemann Existence Theorem implies that $\cHH_g[m]$ admits a unique algebraic structure making this map a finite morphism of varieties. In fact, for all $m\geq1$, each component $\cHH_g[m]$ is an affine variety. To see this, note that $\cHH_g$ is affine because it is the quotient of the affine variety $\cM_{0,2g+2}$ by $S_{2g+2}$.  Since $\cHH_g[m]\rightarrow \cHH_g$ is a finite morphism, it is affine \cite{hartshorne}. 

We now describe $\cHH_g[m]$ analytically. Fix a hyperelliptic involution $\sigma$ on the reference surface $S_g$. Let $\Delta_g$ denote the hyperelliptic mapping class group, i.e. the symmetric mapping class group of the branched cover $S_g\rightarrow S_g/\langle \sigma \rangle$. For each positive integer $m$ we define the level $m$ hyperelliptic mapping class group by
$$\Delta_g[m]= \Delta_g\cap \text{Mod}(S_g)[m].$$
There is a biholomorphism $\Delta_g[m]\backslash\mathfrak{X}_g^{\langle \sigma \rangle}\rightarrow \cHH_g[m]$. Since $\mathfrak{X}_g^{\langle \sigma \rangle}$ is contractible, this shows that $\cHH_g[m]$ is an Eilenberg-MacLane space for $\Delta_g[m]$ whenever $\Delta_g[m]$ is torsion-free, i.e. when $m\geq 3$.

%
%
%

%
%
%

\subsection{Hyperelliptic curves of compact type}
In \cite{acampo}, A'Campo studied the monodromy  of families of smooth hyperelliptic curves in order to prove results about the structure of the group 
$$G_g= \text{Im}\left(\Delta_g\rightarrow \Sp_g(\ZZ)\right).$$
It is easy to see, for example from the Humphries generating set \cite{farbmargalit} for $\text{Mod}(S_2)$, that $\Delta_2 = \text{Mod}(S_2)$. Thus $G_2 = \Sp_2(\ZZ)$. However, when $g\geq 3$, $G_g$ is a proper subgroup of $\Sp_g(\ZZ)$. We have the following result.
\begin{theorem}[A'Campo \cite{acampo}]\label{acampostheorem} 
For all $g\geq 2$, $G_g$ contains the level subgroup $\Sp_g(\ZZ)[2]$ 
and $G_g/\Sp_g(\ZZ)[2]$ is isomorphic to the symmetric group on the set of $2g+2$  fixed points of $\sigma$. 
\end{theorem}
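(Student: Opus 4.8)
The plan is to combine the Birman–Hilden description of $\Delta_g$ with an analysis of the mod-$2$ symplectic representation, and then to realize the level subgroup $\Sp_g(\ZZ)[2]$ explicitly inside $G_g$. Since every orientation-preserving homeomorphism of the sphere fixing the $2g+2$ branch points lifts through the hyperelliptic cover $p\colon S_g\to S_g/\langle\sigma\rangle$, the Birman–Hilden property gives a short exact sequence $1\to\langle\sigma\rangle\to\Delta_g\to\text{Mod}(S_{0,2g+2})\to 1$. The involution $\sigma$ acts on $H_1(S_g,\ZZ)$ as $-I$, which is congruent to $I$ modulo $2$; hence the composite $\Delta_g\to\Sp_g(\ZZ)\to\Sp_g(\ZZ/2\ZZ)$ kills $\sigma$ and descends to a homomorphism $\bar\rho\colon\text{Mod}(S_{0,2g+2})\to\Sp_g(\ZZ/2\ZZ)$.

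First I would show that $\bar\rho$ annihilates the pure mapping class group $\text{PMod}(S_{0,2g+2})$, so that it factors through the permutation action $\text{Mod}(S_{0,2g+2})\to\mathfrak S_{2g+2}$ on the branch points. Since $\text{PMod}(S_{0,2g+2})$ is generated by Dehn twists, it is enough to treat a single twist $T_c$, where $c$ separates the branch set into a part of size $k$ and its complement. If $k$ is odd, $c$ lifts to a single separating curve on $S_g$ and $T_c$ lifts to a separating twist, which acts trivially on homology; if $k$ is even, $c$ lifts to two curves interchanged by $\sigma$, whose classes therefore agree mod $2$ (as $\sigma_*=-I$), and the lift of $T_c$ acts as the product of the two associated transvections, which cancel mod $2$. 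In either case $T_c$ acts trivially on $H_1(S_g,\ZZ/2\ZZ)$. The resulting map $\bar\tau\colon\mathfrak S_{2g+2}\to\Sp_g(\ZZ/2\ZZ)$ is the standard combinatorial representation on $V_0/\langle\mathbf 1\rangle$, where $V_0\subset(\ZZ/2\ZZ)^{\{p_1,\dots,p_{2g+2}\}}$ is the even-weight subspace and $\mathbf 1$ is the all-ones vector; a short argument (a nontrivial permutation moves some pair $\{p_i,p_j\}$, whose complement has size $2g\neq 2$ once $g\geq 2$) shows $\bar\tau$ is injective. This identifies $\bar G_g:=\bar\rho(\Delta_g)$ with $\mathfrak S_{2g+2}$, i.e.\ $G_g/(G_g\cap\Sp_g(\ZZ)[2])\cong\mathfrak S_{2g+2}$.

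It remains to establish the containment $\Sp_g(\ZZ)[2]\subseteq G_g$; this is the heart of the theorem and the step I expect to be the main obstacle. The group $G_g$ is generated by the images $t_1,\dots,t_{2g+1}$ of the Dehn twists about a hyperelliptic chain $c_1,\dots,c_{2g+1}$ (these twists generate $\Delta_g$, as in Theorem \ref{birmanhildentheorem} with $d=2$), each $t_i$ being the symplectic transvection in the class $[c_i]$ and reducing mod $2$ to the transposition $(i,i+1)\in\mathfrak S_{2g+2}$. Consequently the kernel $N:=G_g\cap\Sp_g(\ZZ)[2]$ of $G_g\to\mathfrak S_{2g+2}$ is the normal closure in $G_g$ of the squares $t_i^2$, i.e.\ the image of the pure braid group on $2g+2$ strands. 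The plan is to show, following A'Campo's monodromy analysis, that these transvection squares and their $G_g$-conjugates already generate the full level subgroup $\Sp_g(\ZZ)[2]$; this amounts to a lattice-theoretic generation statement for the level-$2$ congruence subgroup by squares of transvections in a spanning chain, which is where the real work lies. Granting it, $N=\Sp_g(\ZZ)[2]$ and we obtain the extension $1\to\Sp_g(\ZZ)[2]\to G_g\to\mathfrak S_{2g+2}\to 1$. The case $g=2$ needs none of this: there $\Delta_2=\text{Mod}(S_2)$, so $G_2=\Sp_2(\ZZ)$ and $\Sp_2(\ZZ/2\ZZ)\cong\mathfrak S_6$ directly.
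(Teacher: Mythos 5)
The paper itself offers no proof of this statement: it is imported wholesale from A'Campo \cite{acampo}. So the only question is whether your argument stands on its own, and it does not quite. Your treatment of the quotient is essentially right and complete in outline: the Birman--Hilden sequence, the observation that $\sigma_*=-I\equiv I \bmod 2$ so the mod-$2$ representation factors through $\text{Mod}(S_{0,2g+2})$, the triviality of $\text{PMod}(S_{0,2g+2})$ mod $2$, and the injectivity of the combinatorial representation $\mathfrak{S}_{2g+2}\to \Sp_g(\FF_2)$ on $V_0/\langle\mathbf{1}\rangle$ (your complement-size argument is fine for $g\geq 2$). One local slip: when $c$ encloses an odd number $k$ of branch points, $T_c$ does \emph{not} lift to a separating twist. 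The preimage of the annulus about $c$ is a connected annulus double-covering it, so the lift $h$ is a half-twist with $h^2=T_{\tilde{c}}$ (up to $\sigma$); on integral homology $h$ acts as the identity on one side of $\tilde{c}$ and as $-\mathrm{id}$ on the other, which is trivial mod $2$ but not for the reason you give. The slip is harmless, since $\text{PMod}(S_{0,2g+2})$ is generated by twists about curves enclosing exactly two marked points, i.e.\ your even-$k$ case alone suffices.

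The genuine gap is the containment $\Sp_g(\ZZ)[2]\subseteq G_g$, which you correctly identify as the heart of the theorem and then explicitly assume (``Granting it''). Your reduction --- that $N:=G_g\cap\Sp_g(\ZZ)[2]$ is the normal closure in $G_g$ of the squares $t_i^2$, equivalently the image of the pure braid group --- is correct but tautological: it only restates what $N$ is, and the theorem is precisely the assertion that this normal closure exhausts $\Sp_g(\ZZ)[2]$. Deferring that step to ``A'Campo's monodromy analysis'' is circular here, since the statement under review \emph{is} the citation of A'Campo. To close the gap one needs an actual generation argument for the congruence subgroup: for instance, a Mennicke-type induction on $g$ showing $\Sp_g(\ZZ)[2]$ is generated by squares of transvections $T_v^2$ in primitive vectors $v$, together with a verification that every such $T_v^2$ lies in $N$ --- which in turn requires showing that $G_g$ acts with enough transitivity on primitive vectors to carry an arbitrary $v$ to one of the chain classes $[c_i]$, a point your sketch never addresses. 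As written, the proposal establishes the isomorphism $G_g/\bigl(G_g\cap\Sp_g(\ZZ)[2]\bigr)\cong\mathfrak{S}_{2g+2}$ but not the containment, so only the easier half of the theorem is actually proved; the $g=2$ observation ($\Delta_2=\text{Mod}(S_2)$, so $G_2=\Sp_2(\ZZ)$) is correct but covers only that one case.
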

The hyperelliptic locus $\cT_g^{c,hyp}$ in $\cT_g^c$ consists of finitely many mutually isomorphic smooth components. The symplectic group $\Sp_g(\ZZ)$ acts transitively on the set of these components. The stabilizer in $\Sp_g(\ZZ)$ of any one of them is isomorphic to the image of the natural map $\Delta_g\rightarrow \Sp_g(\ZZ)$. For a suitable choice of component $\cHH_g^c[0]$, the hyperelliptic locus $\cHH_g^c$ in $\cM_g^c$ is biholomorphic to the quotient $G_g\backslash \cHH^c_g[0]$. Since $\cHH_g^c[0]$ is simply-connected by \cite{brendlemargalitputman}, we view $\cHH_g^c$ as the orbifold $(G_g,\cHH_g^c[0])$. In fact, $\cHH_g^c$ is a quasiprojective orbifold, as we now explain. 

\vspace{.1in}
Let $\cHH^c_g[m]$ denote the image of $\cHH_g^c[0]$ in $\cM_g^c[m]$. Then $\cHH_g^c[m]$ is a smooth subvariety provided $m\geq 3$. It is biholomorphic to the quotient $G_g[m]\backslash \cHH_g^c[0]$, where we define 
$$G_g[m]= G_g\cap \Sp_g(\ZZ)[m].$$ 
\noindent The orbifold $(G_g[m], \cHH_g^c[0])$ is then a regular quasiprojective finite cover of $\cHH_g^c$.

\vspace{.1in}
Since $\cHH_g^c[0]$ is simply-connected, we have, for $m\geq 3$, a natural isomorphism 
\begin{equation}\label{H1hyplocuslevel}
H_1(\cHH_g^c[m],\ZZ)\cong H_1(G_g[m],\ZZ).
\end{equation}

\subsection{The abelianization of $G_g$} \label{abelianizationGgsection} In order to compute $\text{Pic}\ \cHH_g^c$, we need information about the abelianization of $G_g$. Our strategy involves the use of the \emph{hyperelliptic Torelli group} $S\cI(S_g)$, which is defined to be the kernel of the homomorphism $\Delta_g\rightarrow \Sp_g(\ZZ)$.

\vspace{.1in}
In this section, use properties of $S\cI(S_g)$ in order to compute $H_1(G_g,\ZZ)$. The basic idea is to analyze the exact sequence 
$$H_1(S\cI(S_g), \ZZ)\rightarrow H_1(\Delta_g,\ZZ)\rightarrow H_1(G_g,\ZZ)\rightarrow 0$$
coming from the exact sequence
$$1\rightarrow S\cI(S_g)\rightarrow \Delta_g\rightarrow G_g\rightarrow 1.$$

We will need to make use of some facts about the \emph{braid Torelli group} $\cB\cI_{n}$, which is the kernel of the Burau representation of $B_n$ at $t=-1$ \cite{brendlemargalitputman}. The following result gives characterization of $\cB\cI_n$ that we will find particularly useful.
\begin{theorem}[Brendle-Margalit-Putman \cite{brendlemargalitputman}]
Every element of $B\cI_{2g+1}$ can be written as a product squares of Dehn twists on simple closed curves enclosing $3$ or $5$ marked points.
\end{theorem}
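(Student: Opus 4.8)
The plan is to reduce the statement, via the Birman--Hilden correspondence, to a generation theorem for the hyperelliptic Torelli group, and then to extract generators from a highly connected complex. First I would realize $\cB\cI_{2g+1}$ inside the hyperelliptic picture. Writing $B_{2g+1} = \text{Mod}(D_{2g+1})$ and taking the double cover $S_{g,1}\to D_{2g+1}$ branched over the $2g+1$ marked points, Birman--Hilden theory identifies $B_{2g+1}$ with (a finite-index part of) the hyperelliptic mapping class group $\Delta_g$, under which the Burau representation at $t=-1$ becomes the symplectic representation $\Delta_g\to \Sp_g(\ZZ)$; hence $\cB\cI_{2g+1}$ is carried isomorphically onto the hyperelliptic Torelli group $S\cI(S_g)$. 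A local computation in the branched cover then pins down the generators: for a curve $c$ enclosing an \emph{odd} number of marked points the preimage $\tilde c = p^{-1}(c)$ is connected, and although $T_c$ itself does not lift, its square $T_c^2$ lifts to an honest Dehn twist $T_{\tilde c}$. An Euler-characteristic count shows that when $c$ encloses $3$ (resp. $5$) points, $\tilde c$ bounds a $\sigma$-invariant subsurface of genus $1$ (resp. $2$), so $\tilde c$ is a symmetric separating curve and $T_{\tilde c}\in S\cI(S_g)$. The theorem is thereby equivalent to: $S\cI(S_g)$ is generated by Dehn twists about symmetric separating curves of genus $1$ and $2$.

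Second, I would prove this last statement by a generation-from-an-action argument. It suffices to produce a simplicial complex $\cX$ on which $\Delta_g$ acts so that (i) $\cX$ is connected and simply connected, (ii) $\Delta_g$ acts with one orbit of vertices and finitely many orbits of edges, and (iii) for each vertex $v$ the intersection of $\text{Stab}(v)$ with $S\cI(S_g)$ is generated by genus-$1$ and genus-$2$ symmetric separating twists; one then reads off generators of $S\cI(S_g)$ from the vertex stabilizers together with the (known) symmetric-twist generators of $\Delta_g$ realizing the edges. The natural candidate for $\cX$ is a complex built from symmetric nonseparating curves, equivalently from arcs joining marked points in the disk, designed so that cutting along a vertex curve lowers the genus and supplies an induction on $g$. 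The base cases are direct: $S\cI(S_1)$ is trivial, and $S\cI(S_2)=\cI(S_2)$, which by Mess is generated by genus-$1$ separating twists, all of which are automatically symmetric since the hyperelliptic involution is central in genus $2$.

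The main obstacle, and the technical core of the proof, is establishing the high connectivity of $\cX$ in condition (i). This demands surgery arguments on curves that are carried out $\sigma$-equivariantly: to push a loop or disk in $\cX$ into a subcomplex one repeatedly reduces geometric intersection numbers, and the requirement that all intermediate curves remain invariant under the hyperelliptic involution makes the usual innermost-disk and bigon reductions substantially more delicate than in the ordinary mapping class group setting. A secondary, but still essential, difficulty is the stabilizer computation (iii): one must feed the inductive hypothesis back in on the cut surface and verify a compatibility statement, namely that a Dehn twist about a symmetric separating curve of genus $\geq 3$ can be rewritten as a product of genus-$1$ and genus-$2$ symmetric separating twists, so that no higher-genus generators are actually needed.
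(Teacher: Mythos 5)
First, a point of comparison: the paper does not prove this statement at all --- it is quoted from Brendle--Margalit--Putman \cite{brendlemargalitputman} and used as a black box (its only role is in the proof of Lemma \ref{hyperelliptictorellilemma}, where it guarantees that every element of the hyperelliptic Torelli group maps to a multiple of $4$ in $H_1(\Delta_g,\ZZ)$). So your proposal can only be measured against the original argument of \cite{brendlemargalitputman}, and at the level of strategy you have reconstructed it fairly accurately: the Birman--Hilden dictionary converting squares of twists about curves surrounding $3$ or $5$ marked points into Dehn twists about symmetric separating curves of genus $1$ and $2$, generation via an action on a suitably connected complex with induction on genus, the Mess base case in genus $2$ (valid, since $\Delta_2 = \text{Mod}(S_2)$ forces $S\cI(S_2) = \cI(S_2)$), and the need to factor symmetric separating twists of genus at least $3$ into genus-$1$ and genus-$2$ ones --- this last compatibility step is genuinely required and is itself the subject of a separate paper, Brendle--Margalit \cite{brendlemargalitfactoring}.

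That said, there are two genuine gaps. (1) Your reduction contains an error: Birman--Hilden identifies $\cB\cI_{2g+1}$ with the hyperelliptic Torelli group $S\cI(S_g^1)$ of the surface \emph{with boundary}, not with $S\cI(S_g)$; as the paper's diagram (\ref{bigdiagram}) records, capping the boundary gives a surjection $S\cI(S_g^1)\rightarrow S\cI(S_g)$ whose kernel is infinite cyclic, generated by the boundary twist $T_{\partial}$. Consequently the braid-group statement is \emph{not} equivalent to generation of $S\cI(S_g)$ by genus-$1$ and genus-$2$ symmetric separating twists: after pushing an element of $\cB\cI_{2g+1}$ down to $S\cI(S_g)$ and rewriting its image there, you are left with an undetermined power of the central element $T_{\partial}$, and you must separately exhibit $T_{\partial}$ (equivalently, the relevant power of the full twist $(\sigma_1\cdots\sigma_{2g})^{2g+1}$ in $B_{2g+1}$) as a product of squares of twists about $3$- and $5$-point curves. (2) More fundamentally, everything you yourself label the technical core --- simple connectivity of the proposed symmetric curve complex via $\sigma$-equivariant surgery, the orbit and stabilizer verifications, and the genus-reduction step --- is precisely where the content of the theorem lives, and none of it is supplied or even reduced to a checkable statement. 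As written, the proposal is a correct roadmap of \cite{brendlemargalitputman} rather than a proof.
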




Let $S_g^1$ denote a surface with a single boundary component, which is preserved by the hyperelliptic involution $\sigma$. The hyperelliptic mapping class group $\Delta_g^1$ is the subgroup of $\text{Mod}(S_g^1)$ consisting of isotopy classes of homeomorphisms of $S_g^1$ that commute with $\sigma$. In the definition of $\text{Mod}(S_g^1)$, we require all homeomorphisms and isotopies to restrict to the identity on the boundary of $S_g^1$. Note that $\sigma$ does not give an element of $\text{Mod}(S_g^1)$.

The hyperelliptic Torelli group $S\cI(S^1_g)$ is the subgroup of $\Delta_g^1$ acting trivially on $H_1(S_g^1,\ZZ)$. The hyperelliptic involution $\sigma$ has $2g+1$ fixed points, and the quotient $S_g^1/\langle \sigma \rangle$ can be identified with the marked disk $D_{2g+1}$. 
\begin{figure}[h]
\centering
\hspace{.3in} \includegraphics[scale=.5, angle =-90]{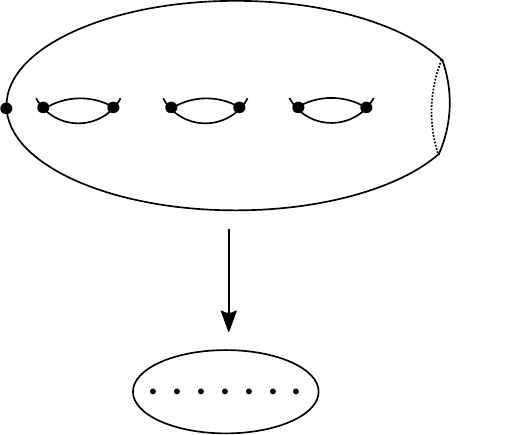}
\caption{The quotient map $S_g^1\rightarrow D_{2g+1}$ with branch points indicated.}
\end{figure}

The Birman--Hilden theory introduced in Section \ref{sectionsymmetricmappingclassgroups} can also be developed for surfaces with boundary. One consequence of this theory is an isomorphism $\Delta_g^1 \cong B_{2g+1}$ given by lifting homeomorphisms of $D_{2g+1}$ to (symmetric) homeomorphisms of $S_g^1$. 
In \cite{brendlemargalitputman}, it is shown that this isomorphism induces an isomorphism 
$$\cB\cI_{2g+1}\cong S\cI(S_g^1).$$
Capping the boundary of $S_g^1$ induces a surjection $S\cI(S_g^1)\rightarrow S\cI_g$ with cyclic kernel generated by the Dehn twist on the boundary. 

The relationships between all of these groups are summarized in the following diagram:
\begin{equation}\label{bigdiagram}
\xymatrix{
& B\cI_{2g+1}\ar[rr]\ar[ld]_{\cong}\ar[dd]& &B_{2g+1}\ar[dd]\ar[rd]^{\cong} & \\
S\cI(S_g^1)\ar[rd]_{\text{capping}} & && &\Delta_g^1\ar[ld]^{\text{capping}}\\
 &S\cI(S_g)\ar[rr] && \Delta_g&
}
\end{equation}

%
%
%
\begin{lemma}\label{hyperelliptictorellilemma}
The image of any element $x\in S\cI_g$ in $H_1(\Delta_g,\ZZ)$ has order dividing $2g+1$.
\end{lemma}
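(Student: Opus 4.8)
The plan is to exploit the diagram \eqref{bigdiagram} in order to transfer the problem from the genus-$g$ setting to the braid group, where the relevant homology is completely understood. The key observation is that the statement concerns the image of an element of $S\cI_g$ in the \emph{abelianization} $H_1(\Delta_g,\ZZ)$, and $\Delta_g$ receives a map from $\Delta_g^1\cong B_{2g+1}$ via capping. So first I would lift the given element $x\in S\cI_g$ to an element $\tilde{x}$ of $S\cI(S_g^1)\cong B\cI_{2g+1}$; this is possible precisely because capping $S\cI(S_g^1)\to S\cI_g$ is surjective, as recorded in the discussion preceding the lemma. Under the Birman--Hilden isomorphism $\Delta_g^1\cong B_{2g+1}$, the element $\tilde{x}$ becomes an element of the braid Torelli group $B\cI_{2g+1}$.

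The crucial input is the Brendle--Margalit--Putman structure theorem quoted just above the lemma: every element of $B\cI_{2g+1}$ is a product of squares of Dehn twists on simple closed curves enclosing $3$ or $5$ marked points. The next step is therefore to understand the image in $H_1(\Delta_g,\ZZ)$ of a single such generator. I would trace such a curve through the capping maps on both sides of \eqref{bigdiagram}: a twist $T_c$ on a curve enclosing an odd number (3 or 5) of branch points in $D_{2g+1}$ lifts to a symmetric mapping class on $S_g^1$, and after capping descends to a Dehn twist (or product of twists) in $\Delta_g$. The point is to compute the class of this descended element in the abelianization $H_1(\Delta_g,\ZZ)$. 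Here I would invoke the earlier computation (Corollary \ref{abelianizationcorollary}, applied to the numerically admissible hyperelliptic double cover, or equivalently the Birman--Hilden presentation of Theorem \ref{birmanhildentheorem}): $H_1(\Delta_g,\ZZ)$ is cyclic, generated by the class $\overline{t}_1$ of a single half-twist generator $t_1$, of order $4g+2$ when $g$ is even and $8g+4$ when $g$ is odd. Every Dehn twist on a nonseparating curve maps to a fixed multiple of $\overline{t}_1$, and a twist enclosing $2k+1$ marked points maps to a determinate multiple that I can read off from the abelianization map on generators.

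Since $x$ lies in $S\cI_g$, its image in $H_1(\Delta_g,\ZZ)$ is built out of \emph{squares} $T_c^2$ of such twists, and so its image is an even multiple of whatever $\overline{T_c}$ contributes. The heart of the argument is to show that this contribution, summed over the generators in the Brendle--Margalit--Putman expansion, always lands in the unique subgroup of order dividing $2g+1$ inside the cyclic group $H_1(\Delta_g,\ZZ)$ of order $2(2g+1)$ or $4(2g+1)$. Concretely, a square of a twist on a curve enclosing an odd number of points will map to $2\cdot(\text{odd multiple})\cdot \overline{t}_1$, and one checks that this is a multiple of $2$ (respectively $4$) times a generator, hence has order dividing $2g+1$.

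I expect the main obstacle to be precisely this last bookkeeping step: correctly computing the image in $H_1(\Delta_g,\ZZ)$ of a twist on a curve enclosing $3$ or $5$ marked points, tracking the factor of $2$ coming from the squaring and the factor coming from the number of enclosed points, and verifying that the result consistently lands in the order-$(2g+1)$ subgroup regardless of the parity of $g$. The subtlety is that a curve enclosing $2k+1$ branch points in $D_{2g+1}$ lifts to a separating or nonseparating curve in $S_g$ depending on $k$, and the corresponding twist may be a single twist or a product; getting the multiple of $\overline{t}_1$ exactly right — rather than merely up to a unit — is where the care is needed. Once the image of each generator is pinned down as an appropriate multiple of $\overline{t}_1$, the conclusion that $x$ has image of order dividing $2g+1$ follows immediately from the cyclic structure of $H_1(\Delta_g,\ZZ)$ and the fact that $x$ is a product of squares.
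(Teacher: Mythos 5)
Your proposal is correct and follows essentially the same route as the paper: lift $x$ through the surjection $S\cI(S_g^1)\rightarrow S\cI_g$, invoke the Brendle--Margalit--Putman generating set for $B\cI_{2g+1}$, and conclude from the fact that $H_1(\Delta_g,\ZZ)$ is cyclic of order $2(2g+1)$ or $4(2g+1)$. The only difference is that the paper does the bookkeeping you flag as the delicate step entirely upstairs in $H_1(B_{2g+1},\ZZ)$, where $T_c^2$ maps to $12$ or $40$ (both multiples of $4$), and then pushes forward by commutativity of the diagram, so the separating/nonseparating behavior of lifted curves that worries you never enters the argument.
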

\begin{proof}
It is pointed out in \cite{brendlemargalitputman} that the image in the abelianization $H_1(B_{2g+1},\ZZ)$ of the square $T_c^2$ of a twist on a simple closed curve $c$ is equal to $12$ if $c$ encloses $3$ points and is equal to $40$ if $c$ encloses $5$ points. By commutativity of (\ref{bigdiagram})
this implies that the image in $H_1(\Delta_g,\ZZ)$ of every $x\in S\cI_g$ is a multiple of $4$. On the other hand, $H_1(\Delta_g,\ZZ)$ is cyclic of order $2(2g+1)$ if $g$ is even and $4(2g+1)$ if $g$ is odd. This implies that the image of every such $x$ is annihilated by $2g+1$. 
\end{proof}

\vspace{.1in}
\begin{proposition}\label{abelianizationGg}
For all $g\geq 2$, the abelianization of $G_g$ is cyclic of order $2$ if $g$ is even and order $4$ if $g$ is odd. 
\end{proposition}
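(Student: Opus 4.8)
The plan is to read off $H_1(G_g,\ZZ)$ as the cokernel of the map $\iota_*\colon H_1(S\cI(S_g),\ZZ)\to H_1(\Delta_g,\ZZ)$ occurring in the exact sequence displayed just before the statement, which comes from $1\to S\cI(S_g)\to\Delta_g\to G_g\to 1$. Since $H_1(\Delta_g,\ZZ)$ is cyclic of order $N$, with $N=4g+2$ when $g$ is even and $N=8g+4$ when $g$ is odd (as recorded in the proof of Lemma \ref{hyperelliptictorellilemma}), this cokernel is automatically cyclic, so the whole problem reduces to pinning down the exact image of $\iota_*$.

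First I would transfer the computation to the braid group via diagram (\ref{bigdiagram}). The capping homomorphism $S\cI(S_g^1)\to S\cI(S_g)$ is surjective, so the image of $\iota_*$ equals the image of $H_1(S\cI(S_g^1),\ZZ)$ in $H_1(\Delta_g,\ZZ)$. Commutativity of (\ref{bigdiagram}) then lets me compute this image along the right-hand route, through $B\cI_{2g+1}\hookrightarrow B_{2g+1}\cong\Delta_g^1\to\Delta_g$, using the Birman-Hilden identification $S\cI(S_g^1)\cong B\cI_{2g+1}$. Because the capping map $\Delta_g^1\to\Delta_g$ is surjective, the induced map $H_1(B_{2g+1},\ZZ)\cong\ZZ\to H_1(\Delta_g,\ZZ)\cong\ZZ/N$ is surjective; in particular the class of an Artin half-twist maps to a generator $\bar{t}_1$ of $H_1(\Delta_g,\ZZ)$.

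Next I would carry out the image computation itself. By the theorem of Brendle-Margalit-Putman \cite{brendlemargalitputman}, $B\cI_{2g+1}$ is generated by squares $T_c^2$ of Dehn twists on curves enclosing $3$ or $5$ marked points, whose classes in $H_1(B_{2g+1},\ZZ)\cong\ZZ$ are $12$ and $40$ respectively (as used in the proof of Lemma \ref{hyperelliptictorellilemma}). Hence the image of $H_1(B\cI_{2g+1},\ZZ)$ in $\ZZ$ is $\gcd(12,40)\,\ZZ=4\ZZ$, and therefore $\operatorname{im}\iota_*=\langle 4\bar{t}_1\rangle\subseteq\ZZ/N$. This subgroup has order $N/\gcd(N,4)=2g+1$ in both parity cases, since $\gcd(4g+2,4)=2$ and $\gcd(8g+4,4)=4$. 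Consequently $H_1(G_g,\ZZ)$ is cyclic of order $N/(2g+1)$, equal to $2$ when $g$ is even and $4$ when $g$ is odd.

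The containment $\operatorname{im}\iota_*\subseteq\langle 4\bar{t}_1\rangle$ is essentially the content of Lemma \ref{hyperelliptictorellilemma}, so the substantive point is the reverse inclusion, namely that the explicit Brendle-Margalit-Putman generators already exhaust this subgroup. I expect the main obstacle to lie in the bookkeeping through (\ref{bigdiagram}) and the two capping maps, together with the verification that an Artin generator maps to a generator of $H_1(\Delta_g,\ZZ)$; once these are in place, the conclusion follows from the elementary gcd computation above.
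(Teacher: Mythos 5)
Your proposal is correct in substance, but it takes a genuinely different route from the paper's. The paper never computes the image of $\iota_*\colon H_1(S\cI(S_g),\ZZ)\to H_1(\Delta_g,\ZZ)$ exactly: it uses Lemma \ref{hyperelliptictorellilemma} only for the upper bound (the cokernel is a quotient of $\ZZ/2\ZZ$, resp.\ $\ZZ/4\ZZ$, once one knows $H_1(G_g,\ZZ)$ is a $2$-group, which it deduces from A'Campo's Theorem \ref{acampostheorem}, the fact that $H_1(\Sp_g(\ZZ)[2],\ZZ)$ is $2$-torsion, and $H_1(S_{2g+2},\ZZ)\cong\ZZ/2\ZZ$), and then obtains the lower bounds separately: for $g$ even from the surjection $G_g\to S_{2g+2}$, and for $g$ odd from the class $\overline{\sigma}$, which is nonzero in $H_1(G_g,\ZZ)$ yet dies in $S_{2g+2}$. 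You instead establish the sharp reverse inclusion $\langle 4\bar t_1\rangle\subseteq\operatorname{im}\iota_*$ directly: since $\gcd(12,40)=4$ (e.g.\ $40-3\cdot 12=4$) and both kinds of Brendle--Margalit--Putman generators exist in $D_{2g+1}$ once $2g+1\geq 5$, i.e.\ $g\geq 2$, the image of $H_1(B\cI_{2g+1},\ZZ)$ in $H_1(B_{2g+1},\ZZ)\cong\ZZ$ is exactly $4\ZZ$, which pins down $\operatorname{im}\iota_*=\langle 4\bar t_1\rangle$, of order $2g+1$, and gives the cokernel in one stroke. This buys independence from Theorem \ref{acampostheorem} and from the level-$2$ symplectic abelianization, and it yields strictly more information (the exact image); the paper's sandwich argument, by contrast, needs no lower bound on the image at all. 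One misstatement to repair: the capping map $\Delta_g^1\to\Delta_g$ is \emph{not} surjective --- its image is $\langle t_1,\ldots,t_{2g}\rangle$, which maps onto the proper, index-$(2g+2)$ subgroup $M_{2g+1}$ of $\text{Mod}(S_{0,2g+2})$ stabilizing the Weierstrass point created by the cap ($\sigma$ itself lies in the image via $\sigma=(t_1\cdots t_{2g})^{2g+1}$, but $t_{2g+1}$ does not). Fortunately you only need surjectivity on $H_1$, and that holds for a different reason: in the presentation of Theorem \ref{birmanhildentheorem} the braid relations make all the $t_i$ conjugate in $\Delta_g$, so $\bar t_1$ already generates $H_1(\Delta_g,\ZZ)$, and the Artin generator $\sigma_1$ maps to $t_1$; with that substitution your argument goes through as written.
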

\begin{proof}
The result of Theorem \ref{acampostheorem}, the 5-term exact sequence of the extension
$$1\rightarrow \Sp_g(\ZZ)[2] \rightarrow G_g\rightarrow S_{2g+2}\rightarrow 1,$$
the fact that $H_1(\Sp_g(\ZZ)[2],\ZZ)$ is 2-torsion \cite{brendlemargalitlevel4}, and the fact that $H_1(S_{2g+2},\ZZ)\cong \ZZ/2\ZZ$ together imply that $H_1(G_g,\ZZ)$ consists only of torsion elements whose order is a power of 2. The exact sequence
$$H_1(S\cI_g,\ZZ)\rightarrow H_1(\Delta_g,\ZZ)\rightarrow H_1(G_g,\ZZ)\rightarrow 0$$
along with Lemma \ref{hyperelliptictorellilemma} implies that $H_1(G_g,\ZZ)$ is a quotient of $\ZZ/2\ZZ$ when $g$ is even and a quotient of $\ZZ/4\ZZ$ when $g$ is odd. On the other hand, Theorem \ref{acampostheorem} implies that $G_g$ surjects onto $S_{2g+2}$. Since $H_1(S_{2g+2},\ZZ)\cong \ZZ/2\ZZ$, $H_1(G_g,\ZZ)$ surjects onto $\ZZ/2\ZZ$. This shows that, when $g$ is even, there there is an isomorphism $H_1(G_g,\ZZ)\cong \ZZ/2\ZZ$. 

When $g$ is odd, the presentation for $\Delta_g$ given in Theorem \ref{birmanhildentheorem} shows that the class $\overline{\sigma}\in H_1(\Delta_g,\ZZ)$ of the hyperelliptic involution $\sigma$ is equal to $2(2g+1)$ times a generator, whereas $H_1(\Delta_g,\ZZ)$ is cyclic of order $4(2g+1)$. Thus $\overline{\sigma}\neq 0$. Moreover, $\overline{\sigma}$ it is not in the image of the map $H_1(S\cI_g,\ZZ)\rightarrow H_1(\Delta_g,\ZZ)$ because $(2g+1)\overline{\sigma} = \overline{\sigma} \neq 0$. The image of $\overline{\sigma}$ in $H_1(G_g,\ZZ)$ is therefore nonzero. Finally, since $\sigma$ acts trivially on its set of fixed points, its image under the composition
$$\Delta_g\rightarrow G_g\rightarrow S_{2g+2}$$
is trivial. Thus when $g$ is odd, the map $H_1(G_g, \ZZ)\rightarrow \ZZ/2\ZZ$ has nontrivial kernel. Therefore, $H_1(G_g,\ZZ)\cong \ZZ/4\ZZ$ for $g$ odd.
\end{proof}
\subsection{The Deligne-Mumford Compactification of $\cHH_g$} 

Let $\overline{\cHH}_g$ denote the closure of $\cHH_g$ in $\overline{\cM}_g$. This is the moduli space of stable hyperelliptic curves of genus $g$. These are stable curves that arise as the limit of a family of smooth hyperelliptic curves. It can be shown that $\overline{\cHH}_g$ is globally the quotient of a smooth variety by a finite group. In fact, $\overline{\cHH}_g = S_{2g+2}\backslash\overline{\cM}_{0,2g+2}$, where $S_{2g+2}$ acts on $\overline{\cM}_{0,2g+2}$ by permuting the marked points.

\vspace{.1in}
For our work, we will need to have some description of the boundary divisor 
$$\partial \cHH_g = \overline{\cHH}_g\setminus \cHH_g.$$ Note that $\partial \cHH_g$ is the intersection of $\overline{\cHH}_g$ with $\partial \cM_g$, the latter of which has irreducible decomposition $\Delta_{irr}\cup D_1\cup \cdots \cup D_{[g/2]}$.  

First, the intersection of $\overline{\cHH}_g$ with $\Delta_{irr}$ is the union of $\displaystyle [(g-1)/2]$ irreducible divisors $E_0,\ldots, E_{ \floor{(g-1)/2}}$, where a generic point of $E_j$ represents an irreducible hyperelliptic curve formed by joining a smooth  hyperelliptic curve of genus $i$ to a smooth hyperelliptic curve of genus $g-i-1$ at two points $\{p_1,p_2\}$ that are exchanged by the hyperelliptic involution \cite{acgh}. The intersection of $\partial \cHH_g$ with the divisor $D_i$ consists of a single irreducible component $\delta_j$, whose generic point represents a curve formed by joining a smooth curve of genus $i$ to a smooth curve of genus $g-i$ at a Weierstrass point. 

\vspace{.1in}
From the description of $\partial \cHH_g$ given above, it is clear that 
$$\cHH_g^c = \overline{\cHH}_g\setminus E_0\cup \cdots \cup E_{[(g-1)/2]}.$$

It is shown in \cite{acgh} that 
$$\text{Pic}\ \overline{\cHH}_g\otimes \QQ \cong \QQ^{g}$$
and that $\text{Pic}\ \overline{\cHH}_g$ is freely generated by the classes of the $g$ divisors $E_0,\ldots, E_{[(g-1)/2]}$ and $D_1,\ldots, D_{[g/2]}$.

\vspace{.1in}
Since $\overline{\cHH}_g$ is has only finite quotient singularities, we have an exact sequence
$$0\rightarrow \bigoplus_{j=0}^{\floor{(g-1)/2}} \QQ E_j \rightarrow \text{Pic}\ \overline{\cHH}_g\otimes\QQ\rightarrow \text{Pic}\ \cHH_g^c\otimes \QQ\rightarrow 0.$$
Therefore $\text{Pic}\ \cHH_g^c\ \otimes \QQ = \QQ^{[g/2]}$. 

\vspace{.1in}
We are now have everything we need to prove Theorem \ref{picardhyperellipticcompacttype}.
\begin{proof}[Proof of Theorem \ref{picardhyperellipticcompacttype}]
By Theorem \ref{hainpic0theorem} and Theorem \ref{putmanpicardtheorem}, $\text{Pic}\ \cHH_g^c$ is finitely generated with torsion subgroup isomorphic to $H_1(G_g,\ZZ)$. Proposition \ref{abelianizationGg} shows that this latter group is isomorphic to $\ZZ/2\ZZ$ when $g$ is even and $\ZZ/4\ZZ$ when $g$ is odd. Since, by the discussion above, $\text{Pic}\ \cHH_g^c \otimes \QQ \cong \QQ^{\floor{g/2}}$, the free part of $\text{Pic}\ \cHH_g^c$ is abstractly isomorphic to $\ZZ^{[g/2]}$. This concludes the proof.
\end{proof}

\subsection{Review of Mixed Hodge Theory}
In order to analyze $\text{Pic}\ \cHH_g[m]$, we call on certain results from the mixed Hodge theory. We quickly review  some basic results on mixed Hodge structures here.    
\subsubsection{Mixed Hodge structures}
Let $H$ be an abelian group. A pure Hodge structure of weight $k\geq 0$ on $H$ is a descending filtration $F^{\bullet}$ of $H\otimes \CC$ such that we have a direct sum decomposition
$$H\otimes \CC = \bigoplus_{p+q = k}H^{p,q}$$
where $H^{p,q} = F^p\cap \overline{F^{q}}$. The filtration $F^{\bullet}$ is called the Hodge filtration. If $H\otimes \QQ$ is equipped with an increasing filtration $W_{\bullet}$ with the property that the Hodge filtration $F^{\bullet}$ induces a pure Hodge structure of weight $l$ on $\displaystyle \text{Gr}_l^WH\otimes \CC$, then we say that $H$ is a mixed Hodge structure (MHS).

The category of mixed Hodge structures is abelian. Deligne has shown that for each $k\geq 0$ the cohomology $H^k(X)$ of a complex algebraic variety admits a functorial mixed Hodge structure. For a smooth projective variety $X$ this Hodge structure is pure of weight $k$. If $X$ is smooth but not complete, the mixed Hodge structure on $H^k(X)$ will not, in general, be pure. Instead, the weight-graded quotients $\text{Gr}_l^WH^k(X,\QQ)$ may be non-zero for $l\in [k,2k]$.

\subsubsection{Poincar\'{e} duality}
On a smooth variety $X$ of dimension $d$, Poincar\'{e} duality gives an isomorphism of MHS $H_c^k(X)\cong (H^{2d-k}(X))^*(-d)$.

\subsubsection{A long exact sequence}
The compactly supported cohomology of a complex variety has a canonical mixed Hodge structure. If $U$ is a Zariski closed subset of $X$, there is a long exact sequence of compactly supported cohomology \cite{peterssteenbrink} where all maps are morphisms of MHS
$$\cdots \rightarrow H^k_c(X-U)\rightarrow H_c^k(X)\rightarrow H^k_c(U)\rightarrow H_c^{k+1}(X-U)\rightarrow \cdots$$

\subsection{Level covers of the hyperelliptic locus}
In this section, show that the Picard group of $\cHH_g[m]$ is finitely generated for all $m\geq 2$. When $m\geq 3$, $\cHH_g[m]$ is a smooth variety, essentially because $\Sp_g(\ZZ)[m]$ is torsion-free in these cases. Arguments from \cite{hain94} can then be applied directly. When $m = 2$, a slightly different approach is required, as $\cHH_g[2]$ has a non-trivial orbifold structure; this stems from the fact that $\Sp_g(\ZZ)[2]$ contains the torsion element $-\text{Id}$. 

\vspace{.1in}
\subsubsection{The case $m\geq 3$}
The complement of $\cHH_g[m]$ inside of $\cHH_g^c[m]$ is a divisor, which we denote by $D$. The codimension in $\cHH_g^c[m]$ of its singular locus $D^{sing}$ is equal to at least 2. We define
$$D^*= D-D^{sing}\ \ \ \ \text{and}\ \ \ \ \ \widetilde{\cHH_g^{c}}[m] = \cHH_g^c[m] - D^{sing}.$$
Notice that $D^*$  is Zariksi closed in $\widetilde{\cHH_g^{c}}[m]$. 

\vspace{.1in} Since $\widetilde{\cHH_g^{c}}[m]$ is Zariski open in $\cHH_g^{c}[m]$, each cohomology group $H^k(\widetilde{\cHH_g^{c}}[m],\ZZ)$ has a canonical mixed Hodge structure. When $k\leq 2$, this MHS coincides with the one on $H^k(\cHH_g^{c}[m],\ZZ)$ because the inclusion $\widetilde{\cHH_g^{c}}[m]\rightarrow \cHH_g^{c}[m] $ induces isomorphisms 
$$H^k(\widetilde{\cHH_g^{c}}[m],\ZZ )\rightarrow H^k(\cHH_g^{c}[m],\ZZ)$$ 
for $k \leq 2$ for reasons of codimension.
%
%
%
\begin{proposition}\label{hodgestructurewt2}
For each $m\geq 3$, the mixed Hodge structure on $H^1(\cHH_g[m],\ZZ)$ is pure of weight 2.
\end{proposition}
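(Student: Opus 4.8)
The plan is to compare $\cHH_g[m]$ with $\cHH_g^c[m]$ across the boundary divisor using the Gysin (residue) sequence, and to split the problem into two pieces: a boundary contribution that is manifestly pure of weight $2$, and an interior contribution coming from $\cHH_g^c[m]$ that I will show vanishes rationally. The geometric input is set up exactly in the preceding discussion, where $D^{sing}$ is removed so as to work with a \emph{smooth} divisor.

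First I would record that, since $m\geq 3$, each of $\cHH_g[m]$, $\cHH_g^c[m]$, $\widetilde{\cHH_g^c}[m]$ and $D^*$ is smooth, that $D^*$ is a smooth divisor in $\widetilde{\cHH_g^c}[m]$, and that $\widetilde{\cHH_g^c}[m]\setminus D^* = \cHH_g[m]$. Applying Poincar\'e duality to the long exact sequence of compactly supported cohomology for the closed inclusion $D^*\hookrightarrow \widetilde{\cHH_g^c}[m]$ yields the Gysin sequence, whose degree-one segment is a sequence of mixed Hodge structures
$$0\to H^1(\widetilde{\cHH_g^c}[m],\QQ)\to H^1(\cHH_g[m],\QQ)\xrightarrow{\ \mathrm{res}\ } H^0(D^*,\QQ)(-1)\to H^2(\widetilde{\cHH_g^c}[m],\QQ).$$
The left-hand zero records that $H^{-1}(D^*)=0$, and the residue target $H^0(D^*,\QQ)(-1)$ is a sum of copies of $\QQ(-1)$, hence pure of weight $2$.

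Next I would invoke the codimension hypothesis: because $D^{sing}$ has codimension at least $2$, the open inclusion induces an isomorphism $H^1(\widetilde{\cHH_g^c}[m],\QQ)\cong H^1(\cHH_g^c[m],\QQ)$, as already noted. It therefore suffices to prove that $H^1(\cHH_g^c[m],\QQ)=0$, for then $\mathrm{res}$ is injective and realizes $H^1(\cHH_g[m],\QQ)$ as a sub-mixed Hodge structure of a weight-$2$ pure Hodge structure. To see the vanishing, I would use the identification (\ref{H1hyplocuslevel}), namely $H_1(\cHH_g^c[m],\ZZ)\cong H_1(G_g[m],\ZZ)$. By Theorem \ref{acampostheorem} we have $\Sp_g(\ZZ)[2]<G_g$, so $G_g$, and hence $G_g[m]=G_g\cap\Sp_g(\ZZ)[m]$, is a finite-index subgroup of $\Sp_g(\ZZ)$. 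Since $g\geq 2$, the group $\Sp_g(\ZZ)$ has Kazhdan's property (T) (alternatively one may cite the congruence subgroup property), and property (T) passes to finite-index subgroups and forces a finite abelianization; thus $H_1(G_g[m],\ZZ)$ is finite and $H^1(\cHH_g^c[m],\QQ)=0$.

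Combining these steps, $H^1(\cHH_g[m],\QQ)$ embeds via $\mathrm{res}$ into $H^0(D^*,\QQ)(-1)$ by a morphism of mixed Hodge structures, so it is pure of weight $2$; as $H^1(\cHH_g[m],\ZZ)$ is finitely generated, the integral statement follows. The main obstacle I anticipate is not conceptual but technical: passing cleanly from the compactly supported sequence of the excerpt to the Gysin sequence in the correct cohomological range, tracking dimensions and Tate twists through Poincar\'e duality, and confirming that the residue map is indeed a morphism of mixed Hodge structures. The conceptual crux is the vanishing $H^1(\cHH_g^c[m],\QQ)=0$, which is precisely where the finiteness of the abelianization of the finite-index subgroup $G_g[m]<\Sp_g(\ZZ)$ is used.
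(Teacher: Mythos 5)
Your proof is correct and is essentially the paper's own argument in Poincar\'e-dual form: the paper runs the long exact sequence of compactly supported cohomology for $D^*\subset \widetilde{\cHH_g^{c}}[m]$ and shows $H_c^{2n-1}(\cHH_g[m],\QQ)$ is a quotient of the pure weight-$(2n-2)$ group $H_c^{2n-2}(D^*,\QQ)$, which is exactly your residue embedding $H^1(\cHH_g[m],\QQ)\hookrightarrow H^0(D^*,\QQ)(-1)$ after dualizing. Your only addition is to make explicit why $H^1(\cHH_g^{c}[m],\QQ)=0$, namely that $G_g[m]<\Sp_g(\ZZ)$ has finite index and hence finite abelianization by property (T); the paper leaves this step implicit behind its citation of (\ref{H1hyplocuslevel}).
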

\begin{proof}
The long exact sequence of compactly supported cohomology with $X = \widetilde{\cHH_g^{c}}[m]$ and $U = D^*$ has a segment
\begin{equation}\label{LESMHS}
\cdots \rightarrow H^{2n-2}_c(D^*,\QQ)\rightarrow H_c^{2n-1}(\cHH_g[m], \QQ)\rightarrow H^{2n-1}_c(X,\QQ)\rightarrow  \cdots 
\end{equation}
By Poincar\'{e} duality, there are isomorphisms 
\begin{equation*}
H^{2n-2}_c(D^*,\QQ)\cong \displaystyle \bigoplus_{|D^*|}\QQ(-n+1)\ \ \ \ \text{and}\ \ \ \  H^{2n-1}_c(\cHH_g[m],\QQ)\cong  H^1(\cHH_g[m],\QQ)^*(-n)
\end{equation*}
where the first direct sum is indexed by the irreducible components of $D^*$. Note that 
$$H^{2n-1}_c(X)\cong H^1(X,\QQ)^*(-n) = 0$$ by equation (\ref{H1hyplocuslevel}). Since (\ref{LESMHS}) is an exact sequence of MHS, we have that $H^1(\cHH_g[m],\QQ)^*(-n)$ is pure of weight $2n-2$. In other words, $H^1(\cHH_g[m],\QQ)$ is pure of weight $2$.
\end{proof}

\begin{proposition}[Hain \cite{hain94}]\label{hainpic0proposition}
Suppose that $X$ is a smooth quasiprojective variety and that $\text{Gr}^W_1H^1(X,\QQ) = 0$. Then $\text{Pic}^0X = 0$ and $\text{Pic}\ X$ is finitely generated.
\end{proposition}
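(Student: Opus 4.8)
The plan is to deduce both conclusions from the single vanishing statement $H^1(X,\cO_X)=0$. The starting point is the exponential exact sequence associated to $0\to\ZZ\to\cO_X\to\cO_X^*\to 0$, whose associated long exact sequence contains the segment
$$H^1(X,\ZZ)\to H^1(X,\cO_X)\to \text{Pic}\ X \xrightarrow{c_1} H^2(X,\ZZ).$$
Since $\text{Pic}^0 X=\ker c_1$ by definition, this segment identifies $\text{Pic}^0 X$ with the cokernel of the map $H^1(X,\ZZ)\to H^1(X,\cO_X)$. In particular, if I can show that $H^1(X,\cO_X)=0$, then $\text{Pic}^0 X=0$ follows at once, and simultaneously $c_1$ becomes injective.

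To compute $H^1(X,\cO_X)$ I would invoke the $E_1$-degeneration of the Hodge--de Rham spectral sequence for the smooth variety $X$ (Deligne), which gives $H^1(X,\cO_X)\cong \text{Gr}^0_F H^1(X,\CC)=H^1(X,\CC)/F^1H^1(X,\CC)$. Now I bring in the weight filtration. Because $X$ is smooth, the mixed Hodge structure on $H^1(X,\QQ)$ has weights only in $\{1,2\}$, so the hypothesis $\text{Gr}^W_1 H^1(X,\QQ)=0$ says precisely that $H^1(X,\QQ)$ is pure of weight $2$. The weight-two part of $H^1$ of a smooth variety is of Hodge--Tate type, i.e. purely of type $(1,1)$; this is visible from the residue (Gysin) sequence
$$0\to H^1(\overline X,\QQ)\to H^1(X,\QQ)\xrightarrow{\text{Res}} H^0(\widetilde D,\QQ)(-1)$$
for a smooth compactification $\overline X$ with normal-crossings boundary $D$ (with normalization $\widetilde D$), in which $\text{Gr}^W_2 H^1(X)$ embeds into the Tate-type group $H^0(\widetilde D,\QQ)(-1)$. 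Consequently $H^1(X,\CC)=H^{1,1}$, so $F^1H^1(X,\CC)=H^1(X,\CC)$, and therefore $H^1(X,\cO_X)=0$. By the previous paragraph, $\text{Pic}^0 X=0$.

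With $\text{Pic}^0 X=\ker c_1=0$, the first Chern class realizes $\text{Pic}\ X$ as a subgroup of $H^2(X,\ZZ)$. Since $X$ is a complex quasiprojective variety it has the homotopy type of a finite CW complex, so $H^2(X,\ZZ)$ is a finitely generated abelian group, and hence so is its subgroup $\text{Pic}\ X$. The one genuinely nonformal ingredient is the middle step: converting the weight hypothesis into the Hodge-filtration statement $F^1H^1(X,\CC)=H^1(X,\CC)$. This rests on two standard facts about the mixed Hodge structure on $H^1$ of a smooth variety, namely the degeneration of the Hodge--de Rham spectral sequence and the Hodge--Tate nature of the weight-two graded piece; once these are granted, everything else is purely formal bookkeeping with the exponential sequence.
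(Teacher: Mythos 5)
Your Hodge-theoretic bookkeeping at the level of weights is correct (for smooth $X$ the weights of $H^1$ lie in $\{1,2\}$, and $\text{Gr}^W_2H^1$ is of Tate type), but the two analytic steps that carry the proof both fail on a noncompact variety, and this is a genuine gap. First, the identification $H^1(X,\cO_X)\cong \text{Gr}^0_FH^1(X,\CC)$ is false for open $X$: Deligne's $E_1$-degeneration is a statement about the logarithmic de Rham complex on a smooth compactification $\overline{X}$ with SNC boundary $D$, and since $\text{Gr}^0_F$ of that complex is $\cO_{\overline{X}}$, what degeneration gives is $\text{Gr}^0_FH^1(X,\CC)\cong H^1(\overline{X},\cO_{\overline{X}})$ --- the structure sheaf of the \emph{compactification}, not of $X$. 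A smooth affine curve of genus $g\geq 1$ shows the difference: $H^1(X,\cO_X)=0$ because $X$ is affine, while $\text{Gr}^0_FH^1(X,\CC)$ has dimension $g$. Second, and more fundamentally, the exponential sequence on a noncompact $X$ computes the \emph{analytic} Picard group, and there is no GAGA here: the comparison map $\text{Pic}^{alg}X\rightarrow \text{Pic}^{an}X$ is in general neither injective nor surjective. For $X = E\setminus\{p\}$ with $E$ an elliptic curve, the algebraic Picard group is $\text{Pic}(E)/\langle\cO(p)\rangle\cong E(\CC)$, yet $X$ is Stein with $H^2(X,\ZZ)=0$, so its analytic Picard group is trivial; every algebraic line bundle is analytically trivial. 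Hence even if you knew $H^1(X^{an},\cO)=0$, injectivity of the analytic $c_1$ would tell you nothing about $\ker c_1$ on the algebraic Picard group, which is the group the proposition is about.

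The paper itself gives no proof --- it quotes the result from Hain --- and the standard argument repairs your proof precisely by moving everything to the compactification, where your tools do apply. For smooth $X$ the restriction $H^1(\overline{X},\QQ)\rightarrow H^1(X,\QQ)$ is injective with image $W_1H^1(X,\QQ)$, so your hypothesis is exactly the statement $H^1(\overline{X},\QQ)=0$. On the projective variety $\overline{X}$ the exponential sequence is legitimate (GAGA), so $\text{Pic}^0\overline{X}=0$ and $\text{Pic}\,\overline{X}=NS(\overline{X})$ embeds in $H^2(\overline{X},\ZZ)$ via $c_1$; in particular it is finitely generated. Since every divisor on $X$ extends, $\text{Pic}\,X$ is the quotient of $\text{Pic}\,\overline{X}$ by the classes $\cO(D_i)$ of the boundary components, which gives finite generation. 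For $\text{Pic}^0X=0$ one runs a diagram chase through the Gysin sequence $\bigoplus_i\ZZ[D_i]\rightarrow H^2(\overline{X},\ZZ)\rightarrow H^2(X,\ZZ)$: if $L$ on $X$ has $c_1(L)=0$, extend it to $\overline{L}$ on $\overline{X}$; then $c_1(\overline{L})=\sum n_i[D_i]$ for some integers $n_i$, and injectivity of $c_1$ on $\text{Pic}\,\overline{X}$ forces $\overline{L}\cong\cO(\sum n_iD_i)$, whence $L$ is trivial. Your residue-sequence display already contains the key term $H^1(\overline{X},\QQ)$; the fix is to use its vanishing on $\overline{X}$ directly rather than to push the type-$(1,1)$ conclusion through an exponential sequence on the open variety, where it cannot reach the algebraic Picard group.
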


We now have everything we need to prove Theorem \ref{hyperelliptictheorem}
\begin{proof}[Proof of Theorem \ref{hyperelliptictheorem}]
Proposition \ref{hodgestructurewt2} implies that $\text{Gr}^W_1H^1(\cHH_g[m],\QQ) = 0$. An application of Proposition \ref{hainpic0proposition} shows immediately that $\text{Pic}\ \cHH_g[m]$ is finitely generated.
\end{proof}

\vspace{.1in}
\subsubsection{The case $m=2$}
At the level of varieties, there is an isomorphism $\cHH_g[2]\cong \cM_{0,2g+2}$. Thus the rational Picard group of the orbifold $\cHH_g[2]$ is isomorphic to $\text{Pic}\ \cM_{0,2g+2}\otimes \QQ = 0$. An application of Lemma \ref{picardlemma} shows that that $\text{Pic}\ \cHH_g[2]$ is finite; we now show that it is non-trivial.

\vspace{.1in}
The approach we shall take involves showing that the abelianization of $\Delta_g[2]$ has a non-trivial torsion subgroup. In order to do this, we introduce the \emph{symplectic Lie algebra}
$$\mathfrak{sp}_g(\FF_2) = \{ A\in \text{M}_{2g\times 2g}(\FF_2)\ |\ A^TJ + JA = 0\}\ \ \ \ \text{where}\ \ \ \ J = \left(\begin{array}{cc}0 & -I_{g\times g} \\I_{g\times g} & 0\end{array}\right).$$

It can be shown that $\mathfrak{sp}_g(\FF_2)\cong \FF_2^{g(2g+1)}$. By \cite{brendlemargalitlevel4} and \cite{putmanpicard} there is an isomorphism 
$$\Sp_g(\ZZ)[2]/\Sp_g(\ZZ)[4] \cong \mathfrak{sp}_g(\ZZ/2\ZZ).$$
By Theorem \ref{acampostheorem}, the natural maps
$$\Delta_g[m]\rightarrow \Sp_g(\ZZ)[m]$$
are surjective for $m$ even. This implies that 
$$\Delta_g[m_1]/\Delta_g[m_2] \cong \Sp_g(\ZZ)[m_1]/\Sp_g(\ZZ)[m_2]$$
whenever $m_2|m_1$ and both are even. We therefore have a short exact sequence 
\begin{equation}
1\rightarrow \Delta_g[4]\rightarrow \Delta_g[2]\rightarrow \mathfrak{sp}_g(\FF_2)\rightarrow 1.
\end{equation}

\begin{lemma}\label{mod2torsionlemma}
For each $g\geq 2$, there is an isomorphism 
$$H_1(\Delta_g[2], \ZZ)\cong \ZZ^{g(2g+1)-1}\oplus \ZZ/2\ZZ $$
\end{lemma}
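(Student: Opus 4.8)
The plan is to realize $\Delta_g[2]$ as a central $\ZZ/2\ZZ$-extension using the variety-level identification $\cHH_g[2]\cong\cM_{0,2g+2}$ together with the Birman--Hilden description of $\Delta_g$, and then to detect the central generator $\sigma$ by pushing it into $\mathfrak{sp}_g(\FF_2)$ via the homomorphism already set up above.

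First I would record the extension. Since $\sigma$ is central in $\Delta_g$ and acts trivially on $\mathfrak{X}_g^{\langle \sigma\rangle}$, and since $\cHH_g[2]\cong \cM_{0,2g+2}$ as varieties, the quotient $\Delta_g[2]/\langle\sigma\rangle$ is the fundamental group $\text{PMod}(S_{0,2g+2})$ of the aspherical manifold $\cM_{0,2g+2}$. This gives a central extension
$$1\rightarrow \langle\sigma\rangle \rightarrow \Delta_g[2]\rightarrow \text{PMod}(S_{0,2g+2})\rightarrow 1,\qquad \langle\sigma\rangle\cong \ZZ/2\ZZ.$$
Because $\cM_{0,2g+2}$ is a complex hyperplane complement, equivalently an iterated fibration by finitely punctured affine lines, $H_1(\text{PMod}(S_{0,2g+2}),\ZZ)=H_1(\cM_{0,2g+2},\ZZ)$ is free abelian, and a direct count of the fibration ranks gives $H_1(\cM_{0,2g+2},\ZZ)\cong \ZZ^{g(2g+1)-1}$.

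Next I would feed this into the five-term exact sequence in integral homology. As $\langle\sigma\rangle$ is central, it reads
$$H_2(\text{PMod}(S_{0,2g+2}),\ZZ)\xrightarrow{\ \tau\ } \ZZ/2\ZZ \rightarrow H_1(\Delta_g[2],\ZZ)\rightarrow \ZZ^{g(2g+1)-1}\rightarrow 0.$$
Since the rightmost term is free the sequence splits, so $H_1(\Delta_g[2],\ZZ)\cong \ZZ^{g(2g+1)-1}\oplus \operatorname{coker}(\tau)$ with $\operatorname{coker}(\tau)\in\{0,\ZZ/2\ZZ\}$, and the torsion subgroup is generated by the class of $\sigma$. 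The lemma therefore reduces to showing that $\sigma$ is nonzero in $H_1(\Delta_g[2],\ZZ)$, i.e. that $\tau$ vanishes. To see this I would use the composite $\Delta_g[2]\rightarrow \Sp_g(\ZZ)[2]\rightarrow \Sp_g(\ZZ)[2]/\Sp_g(\ZZ)[4]\cong \mathfrak{sp}_g(\FF_2)$ furnished by the sequence $1\rightarrow \Delta_g[4]\rightarrow \Delta_g[2]\rightarrow \mathfrak{sp}_g(\FF_2)\rightarrow 1$ above. As $\sigma$ acts on $H_1(S_g,\ZZ)$ by $-\mathrm{Id}$, its image in $\Sp_g(\ZZ)[2]$ is $-I_{2g}$, which does not lie in $\Sp_g(\ZZ)[4]$ since $-1\not\equiv 1 \pmod 4$. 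Under the isomorphism $\Sp_g(\ZZ)[2]/\Sp_g(\ZZ)[4]\cong \mathfrak{sp}_g(\FF_2)$ sending $I+2A$ to $A \bmod 2$, the matrix $-I=I+2(-I)$ maps to the identity $I\in\mathfrak{sp}_g(\FF_2)$, which is nonzero (indeed $I^{T}J+JI=2J=0$ over $\FF_2$, so $I\in\mathfrak{sp}_g(\FF_2)$, and it is visibly not the zero matrix). Since $\mathfrak{sp}_g(\FF_2)$ is abelian, this homomorphism factors through $H_1(\Delta_g[2],\ZZ)$ and carries $\sigma$ to a nonzero element; hence $\sigma\neq 0$, $\operatorname{coker}(\tau)\cong\ZZ/2\ZZ$, and the lemma follows.

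The step I expect to be the main obstacle is precisely the detection of $\sigma$. The naive route through $H_1(\Delta_g)$ is insufficient: there $\overline{\sigma}$ is nonzero only for $g$ odd (for $g$ even it vanishes, as one can extract from Lemma \ref{hyperelliptictorellilemma} and the proof of Proposition \ref{abelianizationGg}), so a case analysis would be forced and the even case would be left uncovered. Routing $\sigma$ through $\mathfrak{sp}_g(\FF_2)$ instead gives a single uniform argument for all $g\geq 2$, which is why I would prefer it. A secondary technical point that must be handled carefully is the identification $\Delta_g[2]/\langle\sigma\rangle\cong \text{PMod}(S_{0,2g+2})$ and the integral freeness of $H_1(\cM_{0,2g+2},\ZZ)$, both of which rest on the variety isomorphism $\cHH_g[2]\cong \cM_{0,2g+2}$ and the hyperplane-complement structure of $\cM_{0,2g+2}$.
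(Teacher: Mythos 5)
Your proof is correct, and its skeleton coincides with the paper's: both start from the central extension $1\to\langle\sigma\rangle\to\Delta_g[2]\to\text{PMod}(S_{0,2g+2})\to 1$, use that $H_1(\text{PMod}(S_{0,2g+2}),\ZZ)$ is free abelian of rank $g(2g+1)-1$, run the five-term sequence, and note the splitting because the quotient is free, reducing everything to detecting $\overline{\sigma}\neq 0$ in $H_1(\Delta_g[2],\ZZ)$. Where you diverge is in how that detection is carried out. The paper argues indirectly: it uses A'Campo's theorem (surjectivity of $\Delta_g[2]\to\Sp_g(\ZZ)[2]$) to get a surjection $H_1(\Delta_g[2],\ZZ/2\ZZ)\twoheadrightarrow \mathfrak{sp}_g(\FF_2)\cong(\ZZ/2\ZZ)^{g(2g+1)}$, and then a dimension count rules out $H_1(\Delta_g[2],\ZZ)\cong\ZZ^{g(2g+1)-1}$, since tensoring that with $\ZZ/2\ZZ$ gives only $g(2g+1)-1$ dimensions. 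You instead trace $\sigma$ itself through the composite $\Delta_g[2]\to\Sp_g(\ZZ)[2]\to\Sp_g(\ZZ)[2]/\Sp_g(\ZZ)[4]\cong\mathfrak{sp}_g(\FF_2)$: since $\sigma$ acts as $-I=I+2(-I)$, it lands on the identity matrix, which is a nonzero element of $\mathfrak{sp}_g(\FF_2)$. Your version buys two things: it does not need surjectivity of $\Delta_g[2]\to\mathfrak{sp}_g(\FF_2)$ (so A'Campo's theorem is not invoked at this point), and it is uniform in $g$ — as you correctly observe, routing through $H_1(\Delta_g,\ZZ)$ would fail for $g$ even, where $\overline{\sigma}=2(2g+1)\overline{t}_1=0$ in the cyclic group of order $4g+2$. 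The price is that you must know the explicit form $I+2A\mapsto A\bmod 2$ of the isomorphism $\Sp_g(\ZZ)[2]/\Sp_g(\ZZ)[4]\cong\mathfrak{sp}_g(\FF_2)$, which the paper only cites abstractly, though this is standard and your verification that $I\in\mathfrak{sp}_g(\FF_2)$ is right. One small caution: your justification of the identification $\Delta_g[2]/\langle\sigma\rangle\cong\text{PMod}(S_{0,2g+2})$ via asphericity of $\cM_{0,2g+2}$ tacitly requires that $\Delta_g[2]/\langle\sigma\rangle$ acts freely on $\mathfrak{X}_g^{\langle\sigma\rangle}$; the paper simply asserts the exact sequence, and a cleaner route is to note that under Birman--Hilden the image of $\Delta_g[2]$ in $\text{Mod}(S_{0,2g+2})$ is exactly the pure mapping class group, because the mod $2$ homology action records the permutation of the Weierstrass points.
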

\begin{proof}
For each $g\geq 2$, there is a short exact sequence
\begin{equation}\label{level2extension}
1\rightarrow \langle \sigma \rangle \rightarrow \Delta_g[2]\rightarrow \text{PMod}(S_{0,2g+2})\rightarrow 1
\end{equation}
Recall that the abelianization of $\text{PMod}(S_{0,2g+2})$ is free abelian of rank $g(2g+1)-1$. The 5-term exact sequence associated to (\ref{level2extension}), has a segment
\begin{equation}\label{deltag25term}
\langle \sigma \rangle \rightarrow H_1(\Delta_g[2] ,\ZZ)\rightarrow \ZZ^{g(2g+1)-1}\rightarrow 0.
\end{equation}
We will show that the image of $\langle \sigma\rangle \rightarrow H_1(\Delta_g[2] ,\ZZ)$ is non-trivial, hence an embedding. 
 
The induced homomorphism $\Delta_g[2]\rightarrow \mathfrak{sp}_g(\FF_2)$ factors through $H_1(\Delta_g[2],\ZZ/2\ZZ)$ to give a surjective homomorphism 
$$H_1(\Delta_g[2], \ZZ/2\ZZ)\rightarrow \mathfrak{sp}_g(\FF_2) \cong (\ZZ/2\ZZ)^{g(2g+1)}.$$

Since $H_1(\Delta_g[2],\ZZ/2\ZZ) = H_1(\Delta_g[2], \ZZ)\otimes \ZZ/2\ZZ$, a dimension count shows that  $\langle \sigma\rangle$ is contained in $H_1(\Delta_g[2],\ZZ)$. Since (\ref{deltag25term}) splits, the result follows.
\end{proof}

\begin{proposition}
For all $g\geq 2$, the first Chern class induces a surjective homomorphism 
$$\text{Pic}\ \cHH_g[2]\rightarrow \ZZ/2\ZZ.$$
\end{proposition}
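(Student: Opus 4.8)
The plan is to route the first Chern class through the equivariant cohomology of the orbifold $(\mathfrak{X}_g^{\langle\sigma\rangle},\Delta_g[2])$ and to identify its target with $H^2(\Delta_g[2],\ZZ)$. First I would fix an integer $m'\geq 3$ so that $\Delta_g[m']$ is torsion-free (this holds since $\Delta_g[m']=\Delta_g\cap\text{Mod}(S_g)[m']$ and $\text{Mod}(S_g)[m']$ is torsion-free for $m'\geq 3$), giving a regular quasiprojective finite cover with deck group $\Gamma=\Delta_g[2]/\Delta_g[m']$. Because $\mathfrak{X}_g^{\langle\sigma\rangle}$ is contractible, Proposition \ref{equivariantcohomologyprop} yields $H^2_{\Gamma}(\Delta_g[m']\backslash\mathfrak{X}_g^{\langle\sigma\rangle},\ZZ)\cong H^2_{\Delta_g[2]}(\mathfrak{X}_g^{\langle\sigma\rangle},\ZZ)\cong H^2(\Delta_g[2],\ZZ)$, so the equivariant first Chern class becomes a homomorphism $c_1:\text{Pic}\ \cHH_g[2]\rightarrow H^2(\Delta_g[2],\ZZ)$.

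Next I would pin down the torsion subgroup of the target. Since $\Delta_g[2]$ is finitely presented, the Universal Coefficients Theorem identifies the torsion subgroup of $H^2(\Delta_g[2],\ZZ)$ with the torsion subgroup of $H_1(\Delta_g[2],\ZZ)$, which Lemma \ref{mod2torsionlemma} computes to be exactly $\ZZ/2\ZZ$ (the nontrivial class being that of $\sigma$).

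The two main inputs then combine by a sandwich argument. On the one hand, Theorem \ref{putmanpicardtheorem} guarantees that the image of $c_1$ contains the \emph{entire} torsion subgroup of $H^2(\Delta_g[2],\ZZ)$, i.e. a copy of $\ZZ/2\ZZ$. On the other hand, $\text{Pic}\ \cHH_g[2]$ is already known to be finite via the varietal isomorphism $\cHH_g[2]\cong\cM_{0,2g+2}$ together with Lemma \ref{picardlemma}; hence the image of $c_1$ is a finite subgroup of $H^2(\Delta_g[2],\ZZ)$, and every finite subgroup of an abelian group lies inside its torsion subgroup. Combining the lower and upper bounds forces the image of $c_1$ to equal the torsion subgroup $\ZZ/2\ZZ$, so corestricting $c_1$ to its image gives the desired surjection $\text{Pic}\ \cHH_g[2]\rightarrow\ZZ/2\ZZ$.

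I expect essentially all of the genuine content to have been discharged already, in Lemma \ref{mod2torsionlemma} (isolating the surviving $2$-torsion) and in the general orbifold Chern-class machinery of Theorem \ref{putmanpicardtheorem}; what remains is bookkeeping. The one point meriting care is precisely the sandwich: the finiteness of $\text{Pic}\ \cHH_g[2]$ is what upgrades Putman's lower bound ``$\ZZ/2\ZZ\subseteq\operatorname{im} c_1$'' to an equality, and an equality is what a \emph{surjection} onto $\ZZ/2\ZZ$ requires. (Alternatively, since $\Delta_g[2]$ is finitely presented its second cohomology is finitely generated, so the torsion summand splits off and one could postcompose $c_1$ with a retraction onto $\ZZ/2\ZZ$; but the finiteness route is cleaner and gives that $c_1$ is itself onto the torsion.)
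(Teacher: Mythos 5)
Your proposal is correct and follows essentially the same route as the paper: the paper's (terser) proof likewise identifies the image of $c_1: \text{Pic}\ \cHH_g[2]\rightarrow H^2(\Delta_g[2],\ZZ)$ with the torsion subgroup of $H_1(\Delta_g[2],\ZZ)\cong\ZZ^{g(2g+1)-1}\oplus\ZZ/2\ZZ$ from Lemma \ref{mod2torsionlemma}, with the finiteness of $\text{Pic}\ \cHH_g[2]$ (via $\cHH_g[2]\cong\cM_{0,2g+2}$ and Lemma \ref{picardlemma}) and Theorem \ref{putmanpicardtheorem} supplying exactly the sandwich you spell out explicitly. One small correction: for $\Delta_g[m']$ to be a subgroup of $\Delta_g[2]$, your auxiliary level $m'\geq 3$ must be even (e.g.\ $m'=4$), matching the paper's earlier choice of $m'$ with $\Sp_g(\ZZ)[m']<\Sp_g(\ZZ)[m]$.
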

\begin{proof}
The image of the first Chern class homomorphism
$$c_1: \text{Pic}\ \cHH_g[2]\rightarrow H^2(\Delta_g[2],\ZZ)$$
is isomorphic to the torsion subgroup of $H_1(\Delta_g[2],\ZZ)$. By Lemma \ref{mod2torsionlemma}, the torsion subgroup of $H_1(\Delta_g[2],\ZZ)$ is isomorphic to $\ZZ/2\ZZ$. 
\end{proof}

\begin{remark}
It is currently unclear whether $\text{Pic}\ \cHH_g[m]$ is non-trivial when $m\geq 3$. The torsion subgroup of $\text{Pic}\ \cHH_g[m]$ is equal to the torsion subgroup of $H_1(\Delta_g[m],\ZZ)$; it is an open problem to determine whether or not this group is trivial. 
\end{remark}


\begin{thebibliography}{00}

\bibitem{acampo}
A'Campo, N. \emph{Tresses, monodromie et le groupe symplectique}, Commentarii Mathematici Helvetici 54.1 (1979): 318-327.

%

\bibitem{acgh}
Arbarello, E. and Cornalba, M. and Griffiths. P. \emph{Geometry of algebraic curves: volume II with a contribution by Joseph Daniel Harris}, Vol. 268. Springer Science $\&$ Business Media, 2011.

\bibitem{arnold}
Arnol'd, V. \emph{The cohomology ring of the colored braid group}, Mathematical Notes, 5, no. 2 (1969), 138-140.

\bibitem{arsievistoli}
Arsie, A. and Vistoli, A. \emph{Stacks of cyclic covers of projective spaces}, Compos.
Math. 140 (2004), 647-666.

\bibitem{birmanhilden}
Birman, J and Hilden, H.M. \emph{On isotopies of homeomorphisms of Riemann surfaces}, Ann. of Math. (2), 97:424-439, 1973.

\bibitem{boggi}
Boggi, M. \emph{Galois coverings of moduli spaces of curves
and loci of curves with symmetry}, Geom. Dedicata, 168.1 (2014), 113-142

\bibitem{brendlemargalitfactoring}
Brendle, T. and Margalit, D. \emph{Factoring in the hyperelliptic Torelli group}, Mathematical Proceedings of the Cambridge Philosophical Society 159(2), February 2012

\bibitem{brendlemargalitlevel4}
Brendle, T. and Margalit, D.  \emph{The level four braid group}, Journal f\"{u}r die reine und angewandte Mathematik (2015)

\bibitem{brendlemargalitputman}
Brendle, T. and Margalit, M. and Putman, A. \emph{Generators for the hyperelliptic Torelli group and the kernel of the Burau representation at t=-1}, Invent. Math. 200 (2015), no. 1, 263-310

\bibitem{broaddusfarbputman}
Broaddus, N. and Farb, B. and Putman, A. \emph{Irreducible Sp-representations and subgroup distortion in the mapping class group},
Comment. Math. Helv. 86 (2011), 537-556

\bibitem{brown}
Brown, K. Cohomology of groups, Vol. 87. Springer Science $\&$ Business Media, 2012.

\bibitem{ershovhe}
Ershov, M. and He, S. \emph{Finiteness properties of the Johnson filtrations}, arXiv:1703.04190

\bibitem{farbmargalit}
Farb, B. and Margalit, D. A Primer on Mapping Class Groups (PMS-49), Princeton University Press, 2011.

\bibitem{fulton}
Fulton, W. \emph{Intersection theory}, Vol. 2. Springer Science $\&$ Business Media, 2013.

\bibitem{ghaswalawinarski}
Ghaswala, T. and Winarski, R. \emph{Lifting homeomorphisms and cyclic branched covers of spheres}, arXiv:1607.06060

\bibitem{ghaswalawinarski2}
Ghaswala, T. and Winarski, R. \emph{The liftable mapping class group of superelliptic covers},  arXiv:1604.03908

\bibitem{gonzalezdiezharvey}
Gonz\'{a}lez-D\'{i}ez, G. and Harvey, W.J. \emph{Moduli of Riemann surfaces with symmetry}, in London Math. Soc.
Lect. Note Ser. 173, Cambridge University Press, 1992, 75-93

\bibitem{hain94}
Hain, R.M. \emph{Torelli groups and geometry of moduli spaces of curves}, Current topics in complex algebraic geometry (Berkeley, CA, 1992/93) (1995): 97-143.

\bibitem{hain1999}
 Hain, R.M. \emph{Moduli of Riemann surfaces, transcendental aspects}, Algebraic Geometry. Ed. L Gottsche. Trieste: Abdus Salam Int. Cent. Theoret. Phys., 2000. 293-353.

\bibitem{hain2000}
Hain, R.M. \emph{Moduli of Riemann surfaces, transcendental aspects}, School on Algebraic Geometry (Trieste, 1999), 293-353, ICTP Lect. Notes, 1, Abdus Salam Int. Cent. Theoret. Phys., Trieste (2000).

\bibitem{hain2006}
Hain, R.M. \emph{Finiteness and Torelli spaces}, Problems on Mapping Class Groups and Related Topics: 59.

\bibitem{hain2013}
Hain, R.M. \emph{Genus 3 mapping class groups are not K\"{a}hler}, Journal of Topology, 8.1 (2015).

\bibitem{hain2014}
Hain, R.M. \emph{Lectures on moduli spaces of elliptic curves}, ransformation Groups and Moduli Spaces of Curves: Advanced Lectures in Mathematics. Ed. L. Ji and S.T. Yau. Beijing: Higher Education Press, 2010. 95-166.

\bibitem{hartshorne}
Hartshorne, R. Algebraic geometry, Vol. 52. Springer Science $\&$ Business Media, 2013.

\bibitem{harveymaclachlan}
Harvey, W. and Maclachlan, C. \emph{On mapping class groups and Teichm\"{u}ller spaces}, Proc. London Math. Soc. 3.4 (1975), 496-512

\bibitem{harrismorrison}
Harris, J. and Morrison, I. \emph{Moduli of curves}, Vol. 187. Springer Science $\&$ Business Media, 2006

\bibitem{johnson2}
Johnson, D. \emph{The structure of the Torelli group, II: A characterization of the group generated
by twists on bounding curves}, Topology 24 (1985), 113-126.

\bibitem{ivanov}
Ivanov, N. \emph{Fifteen problems about the mapping class groups}, Problems on mapping class groups and related topics, 71-  80, (2006), Proc. Sympos. Pure Math. Vol. 74 

\bibitem{kerckhoff}
Kerckhoff, S. \emph{The Nielsen realization problem}, Ann. of Math., (1983): 235-265

\bibitem{knopkraftvust}
Knop, F. and Kraft, H. and Vust, T.  \emph{The Picard group of a G-variety}, Algebraische Transformationsgruppen und Invariantentheorie, Birkh\"{a}user Basel, 1989. 77-87

\bibitem{kordek}
Kordek, K. \emph{The infinite topology of the hyperelliptic locus in Torelli space}, Geom. Dedicata 187 (2017), 89-105

\bibitem{kordekfiniteness}
Kordek, K. \emph{Finiteness properties of the Johnson subgroups}, J. Alg Volume 477 (2017), 195-210

\bibitem{lochak}
Lochack, P. \emph{On arithmetic curves in the moduli spaces of curves}, J. Inst. Math. Jussieu 4.3 (2005): 443-508.

\bibitem{morita}
Morita, S. \emph{Casson's invariant for homology 3-spheres and characteristic classes of surface
bundles}, Topology 28 (1989), 305-323.

\bibitem{mumford}
Mumford, D. \emph{Abelian quotients of the Teichm\"{u}ller modular group}, Journal d'Analyse Mathematique 18.1 (1967), 227-244

\bibitem{pagani}
Pagani, N. \emph{Moduli of abelian covers of elliptic curves}, J. Pure Appl. Algebra 220 (2016), no.4, 1258-1279.

\bibitem{peterssteenbrink}
Peters, C.A.M and Steenbrink, J.H.M \emph{Mixed hodge structures}, Vol. 52. Springer Science $\&$ Business Media, 2008.

\bibitem{pomatalpotonini}
Poma, F. and Talpo, M. and Tonini, F. \emph{Stacks of uniform cyclic covers of curves and their Picard groups}, Algebr. Geom. 2 (2015), no. 1, 91-122.

\bibitem{putmannote}
Putman, A. \emph{A note on the abelianizations of finite-index subgroups of the mapping class group}, Proceedings of the American Mathematical Society 138.2 (2010): 753-758.

\bibitem{putmanpicard}
Putman, A. \emph{The Picard group of the moduli space of curves with level structures}, Duke Mathematical Journal. 2012; 161(4): 623-74.

\bibitem{randalwilliams}
Randal-Williams, O. \emph{The Picard group of the moduli space of r-Spin Riemann surfaces}, Adv. Math. 231 (2012), no. 1, 482-515

\bibitem{sato}
Sato, M. \emph{The abelianization of the level d mapping class group}, Journal of Topology (2010), 

\bibitem{tanaka}
Tanaka, A. \emph{The first homology group of the hyperelliptic mapping class group
with twisted coefficients}, Topology and its Applications 115 (2001) pp.19-42

\bibitem{watanabe2016}
Watanabe, T. \emph{On the completion of the mapping class group of genus two}, arXiv:1609.05552

\bibitem{winarski}
Winarski, R. \emph{Symmetry, isotopy, and irregular covers}, Geometriae Dedicata, 177.1 (2015), 213-227
\end{thebibliography}
\end{document}